\documentclass[10pt,a4paper]{amsart}
%%
%% authors: giacomo cherubini
%%          joão guerreiro
%% creation date: 28-09-2016
%% last update: 01-10-2018
%%
\pdfoutput=1
\usepackage{amsmath,amsfonts,amssymb,amsthm}
\usepackage{xcolor}
\usepackage{todonotes}
\usepackage[pagebackref=true]{hyperref}
\hypersetup{
            colorlinks   = true,
            pdfauthor    = {Giacomo Cherubini and Joao Guerreiro},
            pdftitle     = {Mean square in the prime geodesic theorem},
            pdfkeywords  = {},
            pdfcreator   = {Pdflatex},
            pdfpagemode  = UseNone,
            pdfstartview = FitH
           }
\usepackage{pdfpages}
\usepackage[normalem]{ulem}
\usepackage{url}
\usepackage{enumerate}
\usepackage{verbatim}
\usepackage{mathrsfs}
\usepackage{csquotes}
\usepackage{mathtools}
\usepackage{lipsum}
\urlstyle{sf}

%%%%%%%%%%%%%%%%%%%%%%%%%%%%%%%%%%

\setlength{\parindent}{0.5\parindent}
\numberwithin{equation}{section}

\newcommand\HH{\mathbb{H}}
\newcommand\NN{\mathbb{N}}

\newcommand\RR{\mathbb{R}}
\newcommand\ZZ{\mathbb{Z}}
\newcommand\eps{\varepsilon}
\newcommand\wh{\widehat}
\newcommand\GmodH{\Gamma\backslash\HH}

\DeclareMathOperator\PSL{PSL}
\DeclareMathOperator\SL{SL}
\DeclareMathOperator\Tr{Tr}
\DeclareMathOperator\vol{vol}
\DeclareMathOperator\li{li}

\theoremstyle{plain}
\newtheorem{thm}{Theorem}[section]

\newtheorem{lemma}[thm]{Lemma}

\newtheorem{prop}[thm]{Proposition}

\theoremstyle{definition}

\newtheorem{rmk}[thm]{Remark}

%%%%%%%%%%%%%%%%%%%%%%%%%%%%%%%%%%
\title[Mean square in the prime geodesic theorem]{Mean square in the prime geodesic theorem}

\author{Giacomo Cherubini}
\address{Max-Planck-Institut f\"ur Mathematik, Vivatsgasse 7, 53111, Bonn, Germany}
\email{cherubini@mpim-bonn.mpg.de}

\author{Jo\~{a}o Guerreiro}
\address{Max-Planck-Institut f\"ur Mathematik, Vivatsgasse 7, 53111, Bonn, Germany}
\email{guerreiro@mpim-bonn.mpg.de}

\thanks{This work was supported by the Max-Planck-Institut f\"ur Mathematik. We would like to record our thanks to the Institute for the excellent working conditions.}
\keywords{Prime geodesic theorem, Selberg trace formula, Kuznetsov trace formula, Kloosterman sums}
\subjclass[2010]{Primary 11F72; Secondary 11M36, 11L05}
\date{\today}

%%%%%%%%%%%%%%%%%%%%%%%%%%%%%%%%%%%%%%%%%%%%%%%%%%%

\begin{document}

\begin{abstract}
We prove upper bounds for the mean square of the remainder in the prime geodesic theorem,
for every cofinite Fuchsian group, which improve on average on the best known pointwise bounds.
The proof relies on the Selberg trace formula.
For the modular group we prove a refined upper bound by using the Kuznetsov trace formula.
\end{abstract}

\maketitle

%%%%%%%%%%%%%%%%%%%%%%%%%%%%%%%%%%%%%%%%%%%%%%%%

\section{Introduction}

There is a striking similarity between the distribution of lengths of primitive closed geodesics
on the modular surface and prime numbers.
If we consider the asymptotic of the associated counting function,
the problem of finding optimal upper bounds on the remainder is intriguing,
especially because the relevant zeta function is known to satisfy the corresponding Riemann hypothesis.

We start with reviewing briefly the framework of the problem
(for a more detailed introduction we refer to \cite{sarnak_prime_1980}, \cite[\S 10.9]{iwaniec_spectral_2002}),
and then state our results.
Since the definitions extend to every finite volume Riemann surface,
we work in this generality, and only at a later point we specialize to the modular surface.

Every cofinite Fuchsian group $\Gamma$ acts on the hyperbolic plane $\HH$
by linear fractional transformations, and every hyperbolic element $g\in\Gamma$
is conjugated over $\SL_2(\RR)$ to a matrix of the form
\[\begin{pmatrix}\lambda^{1/2} &0 \\0&\lambda^{-1/2}\end{pmatrix},\quad\lambda\in\RR,\;\lambda>1.\]
The trace of $g$ is therefore $\Tr(g)=\lambda^{1/2}+\lambda^{-1/2}$,
and we define its norm to be $N(g)=\lambda$.
Since the trace and the norm are constant on the conjugacy class of~$g$,
if we set $P=\{\gamma g\gamma^{-1},\gamma\in\Gamma\}$,
we can define the trace and the norm of $P$ by $\Tr(P)=\Tr(g)$ and $NP=N(g)$.
Moreover, we say that $g$ (resp. $P$) is primitive
if $g$ cannot be expressed as a positive power (greater than one)
of another element $g_0\in\Gamma$.
Every class $P$ can then be written as $P=P_0^\nu$
for some primitive conjugacy class $P_0$ and some positive integer $\nu\geq 1$.

For $X>0$ define the counting function
\begin{equation}\label{intro:eq030}
\pi_\Gamma(X)  = \sharp\{P_0:NP_0\leq X\}.
\end{equation}

A hyperbolic conjugacy class $P$ determines a closed geodesic on the Riemann surface
$\GmodH$, and
its length is given exactly by $\log NP$.
For this reason we say that $\pi_\Gamma(X)$ in \eqref{intro:eq030} counts
primitive hyperbolic conjugacy classes in $\Gamma$, lengths of primitive geodesics on $\GmodH$,
or, by abusing language, \enquote{prime} geodesics.

It is a result from the middle of the last century
(see e.g. Huber \cite{huber_zur_1961,huber_zur_1961-1}, although the result was already known to Selberg)
that as $X\to\infty$ we have the asymptotic formula
\begin{equation}\label{1208:eq002}
\pi_\Gamma(X)\sim \li(X),
\end{equation}
where $\li(X)$ is the logarithmic integral.
The asymptotic coincides with that of the prime counting function,
since we have
\begin{equation}\label{intro:eq031}
\pi(X)=\sharp\{p\leq X: p\text{ prime}\}\sim \li(X)\quad\text{as }X\to\infty.
\end{equation}
The structure of the two problems is also similar. The study of primes
is strictly related to the study of the zeros of the Riemann zeta function $\zeta(s)$;
on the other hand, for primitive geodesics we need to study the zeros of
the Selberg zeta function $Z_\Gamma(s)$, which is defined for $\Re(s)>1$ by
\[
Z_\Gamma(s) = \prod_{P_0} \prod_{\nu=0}^\infty \big(1 - (NP_0)^{-\nu-s}\big),
\]
the outer product ranging over primitive hyperbolic classes in $\Gamma$,
and extends to a meromorphic function on the complex plane.

It is an interesting question to determine
the correct rate of the approximation in \eqref{1208:eq002} and \eqref{intro:eq031},
namely to prove optimal upper bounds on the differences
\begin{equation}\label{1208:eq001}
\pi(X)-\li(X)
\quad\text{and}\quad
\pi_\Gamma(X)-\li(X).
\end{equation}
In the case of primes, the conjectural estimate $|\pi(X)-\li(X)|\ll X^{1/2}\log X$
is equivalent to the Riemann hypothesis (see e.g. \cite[Th. 12.3]{ivic_theory_1985}).

To understand what type of control we should expect on the difference
on the right in \eqref{1208:eq001}, we introduce the weighted counting function
\[\psi_\Gamma(X) = \sum_{NP\leq X} \Lambda_\Gamma(NP),\]
where $\Lambda_\Gamma(NP)=\log(NP_0)$
if $P$ is a power of the primitive conjugacy class $P_0$, and $\Lambda_\Gamma(x)=0$ otherwise.
The function $\psi_\Gamma(X)$ is the analogous of the classical
summatory von Mangoldt function $\psi(X)$ in the theory of primes,
and studying $\pi_\Gamma(X)$ is equivalent to study $\psi_\Gamma(X)$,
but it is easier to work with the latter.

The asymptotic \eqref{1208:eq002} for $\pi_\Gamma(X)$ translates into $\psi_\Gamma(X)\sim X$
as $X$ tends to infinity.
The spectral theory of automorphic forms provides a finite number of additional secondary terms,
and we define the complete main term $M_\Gamma(X)$ to be the function
\begin{equation}\label{1208:eq003}
M_\Gamma(X) = \sum_{1/2<s_j\leq 1} \frac{\,X^{s_j}\!\!\!}{s_j} \, ,
\end{equation}
where $s_j$ is defined by $\lambda_j=s_j(1-s_j)$, $\lambda_j$ are the eigenvalues
of the Laplace operator~$\Delta$ acting on $L^2(\GmodH)$, and the sum is restricted
to the \emph{small eigenvalues}, that is, those satisfying $0\leq \lambda_j<1/4$.
Since the eigenvalues of $\Delta$ form a discrete set with no accumulation points,
the sum in \eqref{1208:eq003} is finite.
The remainder $P_\Gamma(X)$ is then defined as
\[
P_\Gamma(X) = \psi_\Gamma(X) - M_\Gamma(X).
\]

Various types of pointwise upper bounds have been proved
in the past years on $P_\Gamma(X)$, and we review them in Remark \ref{intro:rmk002} below.
In a different direction, it is possible to study the moments of $P_\Gamma(X)$,
and this has been neglected so far.

In the case of the prime number theorem, it is a classical result (assuming the Riemann hypothesis) that
the normalized remainder
\begin{equation*}
\frac{\psi(e^y)-e^y}{e^{y/2}}
\end{equation*}
admits moments of every order and a limiting distribution with
exponentially small tails
(see \cite[p. 242]{wintner_asymptotic_1935} and \cite[Th. 1.2]{rubinstein_chebyshevs_1994}).
The aim of this paper is to prove the following estimate on
the second moment of $P_\Gamma(X)$.

\begin{thm}\label{intro:thm01}
Let $\Gamma$ be a cofinite Fuchsian group. For $A\gg 1$ we have
\begin{equation*}
\frac{1}{A}\int_A^{2A} |P_\Gamma(X)|^2 dX \ll A^{4/3}.
\end{equation*}
\end{thm}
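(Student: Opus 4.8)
The plan is to combine the Selberg trace formula with a second moment computation over the dyadic window $[A,2A]$. The first step is an (approximate) explicit formula for $\psi_\Gamma$. Feeding into the trace formula a test function that approximates $e^{u/2}$ on $[\ell_0,\log X]$ (where $\ell_0$ is the systole) and is smoothed at multiplicative scale $1/T$ near the upper endpoint, $2\le T\le X$ a free parameter, the hyperbolic side reconstructs $\psi_\Gamma(X)$, the small eigenvalues on the spectral side reproduce $M_\Gamma(X)$ exactly, and the remaining contributions are lower order. Writing $\lambda_j=\tfrac14+r_j^2$ for the eigenvalues $\ge\tfrac14$, this gives
\[
P_\Gamma(X)=2\Re\sum_{0<r_j\le T}\frac{X^{1/2+ir_j}}{\tfrac12+ir_j}+E(X),\qquad E(X)\ll \frac{X}{T}+X^{1/2+\eps},
\]
the $X^{1/2+\eps}$ absorbing the identity, elliptic, parabolic and continuous‑spectrum terms (the continuous spectrum contributes $\ll X^{1/2}\int_{|r|\le T}|\tfrac{\varphi'}{\varphi}(\tfrac12+ir)|\,\tfrac{dr}{1+|r|}\ll X^{1/2+\eps}$ for the scattering determinant $\varphi$), and the $X/T$ coming from the smoothing window; one may alternatively derive such a formula from Perron's formula applied to $-Z_\Gamma'/Z_\Gamma$. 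Squaring and integrating, the problem reduces to bounding the mean square of the spectral sum, since the contribution of $E$ is $\ll A^2/T^2+A^{1+\eps}$.

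For the spectral sum I would majorise the indicator of $[A,2A]$ by a fixed smooth bump $\phi(X/A)$ and expand the square. After the substitution $X=Au$ the resulting oscillatory integral $\int\phi(X/A)X^{1+i(r_j-r_k)}\,dX$ equals $A^2$ times the Fourier transform of a fixed $C_c^\infty$ function evaluated at $r_j-r_k$, hence is $\ll_M A^2(1+|r_j-r_k|)^{-M}$ for every $M$. The diagonal terms then contribute $\asymp A\sum_{0<r_j\le T}(\tfrac14+r_j^2)^{-1}\ll A\log T$ by Weyl's law ($\#\{r_j\le t\}\asymp t^2$). The off‑diagonal terms are
\[
\ll_M\ A\sum_{\substack{r_j,r_k\le T\\ r_j\ne r_k}}\frac{1}{(1+r_j)(1+r_k)(1+|r_j-r_k|)^{M}},
\]
and a dyadic decomposition, using the local Weyl bound $\#\{j:|r_j-t|\le1\}\ll 1+t$ to count pairs of nearby spectral parameters, shows that this double sum is $\ll T$ (taking $M=2$). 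Hence the spectral sum has mean square $\ll AT$, and altogether
\[
\frac1A\int_A^{2A}|P_\Gamma(X)|^2\,dX\ \ll\ AT+\frac{A^2}{T^2}+A^{1+\eps}.
\]
Choosing $T\asymp A^{1/3}$ balances the first two terms and yields the claimed bound $\ll A^{4/3}$.

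I expect the main obstacle to be the off‑diagonal estimate: the bound $\sum_{r_j,r_k\le T}(1+r_j)^{-1}(1+r_k)^{-1}(1+|r_j-r_k|)^{-M}\ll T$ is precisely what produces the exponent $4/3$, it is essentially sharp (there is genuinely no cancellation among the many pairs with $|r_j-r_k|$ small), and it rests on controlling clusters of Laplace eigenvalues through the local Weyl law; moreover the oscillation of $X^{i(r_j-r_k)}$ has to be harvested through the smooth weight $\phi(X/A)$, since a sharp cut‑off would cost an extra $\log T$.

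The remaining delicate point, needed to get the clean exponent $A^{4/3}$ rather than $A^{4/3}(\log A)^{O(1)}$, is the control of the error term $E(X)$. Up to genuinely lower‑order terms its contribution is the short‑interval count $\psi_\Gamma(Xe^{1/T})-\psi_\Gamma(X)$, whose pointwise size $X/T+O(X^{3/4})$ is not small enough when $T\asymp A^{1/3}$; instead one estimates its second moment over $[A,2A]$ directly. Expanding the square produces a pair–correlation sum over pairs of geodesic lengths lying in a window of relative length $1/T$, which is handled either by the prime geodesic theorem in short intervals or, better, by the explicit formula applied once more — the spectral coefficients now acquiring a factor $\min(1,r_j/T)$, so that the corresponding diagonal is only $O(\log A)$ — giving $\frac1A\int_A^{2A}\bigl(\psi_\Gamma(Xe^{1/T})-\psi_\Gamma(X)\bigr)^2\,dX\ll A^2/T^2+A^{1+\eps}$, as required.
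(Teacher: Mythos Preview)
Your overall strategy --- smooth the trace formula at scale $1/T$, isolate a spectral sum, and bound its second moment over $[A,2A]$ using the local Weyl law to control near-diagonal pairs --- is exactly the paper's approach, and your off-diagonal estimate yielding the mean square $\ll AT$ for the discrete spectral sum is correct and matches Proposition~\ref{prop01}.

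The genuine gap is your treatment of the continuous spectrum. You absorb it into $E(X)$ via the pointwise bound
\[
X^{1/2}\int_{|r|\le T}\Bigl|\frac{\varphi'}{\varphi}\bigl(\tfrac12+ir\bigr)\Bigr|\frac{dr}{1+|r|}\ \ll\ X^{1/2+\eps},
\]
but this is false for a general cofinite $\Gamma$. The only control one has is the local Weyl law (Lemma~\ref{lemma006}), which gives $\int_{T\le|r|\le T+1}|\varphi'/\varphi(\tfrac12+ir)|\,dr\ll T$; summing over unit intervals yields $\int_{|r|\le T}|\varphi'/\varphi(\tfrac12+ir)|\,\tfrac{dr}{1+|r|}\ll T$, not $O(X^\eps)$. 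Pointwise this gives $\ll X^{1/2}T$, whose square $\ll A T^2=A^{5/3}$ at $T=A^{1/3}$ ruins the bound. More fundamentally, for generic (non-arithmetic) cofinite groups the Phillips--Sarnak philosophy suggests that the cuspidal spectrum may be finite and essentially all of the Weyl density sits in the continuous spectrum, so your discrete-only spectral sum would then be empty while the continuous part carries the whole remainder. The truncated explicit formula you invoke is precisely Iwaniec's lemma, and the paper notes explicitly that it is only available for the modular group (where $\varphi$ is built from $\zeta$).

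The fix is simple and is what the paper does: do not bound the continuous spectrum pointwise. Keep the full smoothed spectral side (so no sharp truncation, and hence no $X^{3/4}$ issue either --- your last paragraph becomes unnecessary), and run the \emph{same} mean-square argument on
\[
\int_\RR h_\pm(t)\,\frac{-\varphi'}{\varphi}\bigl(\tfrac12+it\bigr)\,dt
\]
as you did on $\sum_{r_j}h_\pm(r_j)$. The local Weyl law applies equally to both pieces, your off-diagonal machinery goes through verbatim with sums over $r_j$ replaced by integrals against $|\varphi'/\varphi|$, and one gets the matching bound $\ll e^R/\delta$ (Proposition~\ref{prop02}). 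With that correction your proof is complete and essentially identical to the paper's.
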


\begin{rmk}\label{intro:rmk001}
By construction the zeros of the Selberg zeta function correspond
to the eigenvalues of $\Delta$. The zeros corresponding to
small eigenvalues give rise to the secondary terms in the definition of $M_\Gamma(X)$,
and there are no other zeros in the half plane $\Re(s)>1/2$.
In this sense $Z_\Gamma(s)$ satisfies an analogous of the Riemann hypothesis,
and the fact suggests that we might have
\begin{equation}\label{1212:eq001}
P_\Gamma(X) \ll X^{1/2+\eps}\quad\forall\;\eps>0,
\end{equation}
since this is the case for the primes under RH.
However, the same method of proof fails, due to the abundance of zeros of $Z_\Gamma(s)$
(which in turn is related to the fact that $Z_\Gamma(s)$ is a function of order two,
while $\zeta(s)$ is of order one), and only gives
\begin{equation}\label{intro:V2:eq041}
P_\Gamma(X)\ll X^{3/4}.
\end{equation}
For a proof we refer to \cite[Th. 10.5]{iwaniec_spectral_2002}.
On the other hand, it is possible to prove that \eqref{1212:eq001}, if true,
is optimal, since we have the Omega result by Hejhal \cite[Th. 3.8, p. 477, and note 18, p. 503]{hejhal_selberg_1983}
\begin{equation*}
P_\Gamma(X) = \Omega_\pm(X^{1/2-\delta}) \quad\forall\;\delta>0,
\end{equation*}
which can be strengthened to $\Omega_\pm(X^{1/2}(\log\log X)^{1/2})$ for cocompact groups
and congruence groups.
\end{rmk}

\begin{rmk}
Recently Koyama \cite{koyama_refinement_2016}
and Avdispahi\'c \cite{avdispahic_koyamas_2017,avdispahic_prime_2017}
have tried an approach to find upper bounds on $P_\Gamma(X)$
using a lemma of Gallagher \cite[Lemma 1]{gallagher_large_1970}.
Their results are of the type
\[
P_\Gamma(X) \ll X^{\eta+\eps}\quad\forall\;\eps>0,
\]
where $\eta=7/10$ (resp. $\eta=3/4$) for $\Gamma$ cocompact (resp. ordinary cofinite),
and with $X$ outside a set $B\subseteq [1,\infty)$ of finite logarithmic measure,
that is, such that
\(
\int_B X^{-1} dX < \infty.
\)
Theorem \ref{intro:thm01} improves on this to the extent that
$\eta$ can be taken to be any number greater than $2/3$. Indeed,
a direct consequence of Theorem \ref{intro:thm01} is that
for every cofinite Fuchsian group, and for every $\eps>0$, the set
\begin{equation*}
B = \{ X\geq 1: |P_\Gamma(X)| \geq X^{2/3}(\log X)^{1+\eps}\}
\end{equation*}
has finite logarithmic measure.
Iwaniec claims without proof in \cite[p. 187]{iwaniec_nonholomorphic_1984}
that \enquote{it is easy to prove that $P_\Gamma(X)\ll X^{2/3}$ for almost all $X$}.
Theorem \ref{intro:thm01} proves his claim (up to $\eps$) for $X$ outside a set of finite logarithmic measure.
\end{rmk}

In the case of the modular group we can prove a stronger bound than
for the general cofinite case.
In this case we can exploit the
Kuznetsov trace formula,
obtaining the following.

\begin{thm}\label{intro:thm02}
Let $G=\PSL(2,\ZZ)$. For $A \gg 1$ and every $\eps>0$,
\begin{equation*}
\frac{1}{A} \int_A^{2A} \left| P_G(X) \right|^2 dX \ll A^{5/4+\varepsilon}.
\end{equation*}
\end{thm}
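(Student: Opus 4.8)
The plan is to start from the explicit formula for $\psi_G(X)$ coming from the Selberg trace formula, which expresses $P_G(X)$ essentially as a sum over the spectral parameters $t_j$ (with $\lambda_j = 1/4 + t_j^2$) of terms of size roughly $X^{1/2} \sum_j X^{it_j}/(1/2 + it_j)$, truncated at height $T$, plus an error term depending on $X$ and $T$. Integrating $|P_G(X)|^2$ over the dyadic interval $[A,2A]$ and expanding the square, the diagonal terms contribute the expected size while the off-diagonal terms involve $\int_A^{2A} X^{i(t_j - t_k)} dX$, which is small unless $t_j$ and $t_k$ are close. The key point is that, unlike in the general cofinite case treated for Theorem \ref{intro:thm01} where one uses only the Weyl law $\sharp\{t_j \le T\} \asymp T^2$, for the modular group one can bring in the Kuznetsov formula to control the relevant spectral exponential sum $\sum_{t_j \le T} X^{it_j}$ more efficiently.

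Concretely, the main step is to estimate a smoothed mean square of the form $\int_A^{2A} \big| \sum_{t_j \le T} h(t_j) X^{it_j} \big|^2 \, dX$ (or its weighted analogue with the factor $1/(1/2 + it_j)$ absorbed), and here I would open up the square, apply Kuznetsov to each product of Maass–form coefficients or Hecke eigenvalues that arises, and exploit cancellation in the resulting sums of Kloosterman sums via Weil's bound. This is where the arithmetic of $\PSL(2,\ZZ)$ enters decisively: the Kloosterman-sum side, combined with the Bessel-function transform, yields a saving over the trivial diagonal bound, and optimizing the truncation parameter $T$ against $A$ produces the exponent $5/4$ in place of $4/3$. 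One should also handle the continuous-spectrum (Eisenstein) contribution, but for the modular group this is governed by $\zeta(s)$ and is comparatively harmless, contributing at most a power of $\log A$ worse than the discrete part.

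The hard part will be carrying out the Kuznetsov analysis with honest uniformity in all parameters — the spectral cutoff $T$, the length $A$, and the moduli $c$ of the Kloosterman sums — and in particular controlling the ranges where the Bessel transforms transition between oscillatory and exponentially decaying regimes, since naive bounds there can destroy the gain. A secondary technical obstacle is bookkeeping the error term from truncating the explicit formula at height $T$: it must be shown that, after integration over $[A,2A]$, this truncation error is dominated by $A^{5/4+\eps}$ for the optimal choice of $T$, which forces $T$ to be taken as a suitable power of $A$ and requires the Kuznetsov estimate to be strong enough to tolerate that choice. Once these are in place, summing the diagonal contribution (of size $\asymp A \cdot A \cdot \log$-factors after the $1/A$ normalization) against the off-diagonal savings and the truncation error, and optimizing in $T$, yields the claimed bound $A^{-1}\int_A^{2A} |P_G(X)|^2 \, dX \ll A^{5/4+\eps}$.
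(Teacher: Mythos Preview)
Your overall strategy---Iwaniec's explicit formula, reduce to the spectral exponential sum $R(X,T)=\sum_{t_j\le T}X^{it_j}$, bring in Kuznetsov and the Weil bound, then optimize in $T$---is indeed the paper's approach. But the mechanism by which Kuznetsov enters is not the one you describe, and this is a genuine gap.

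You write that after opening the square you would ``apply Kuznetsov to each product of Maass-form coefficients or Hecke eigenvalues that arises''. No such coefficients arise: the expanded square of $\sum_j X^{it_j}$ is just $\sum_{j,k} X^{i(t_j-t_k)}$, a pure sum over the spectrum with no arithmetic weights, and the Kuznetsov formula cannot be applied to it directly. The Kuznetsov identity has the shape
\[
\sum_{t_j}|\nu_j(n)|^2\,\hat\phi(t_j)\;+\;(\text{Eisenstein})\;=\;(\text{Kloosterman side}),
\]
so one must first \emph{insert} the Fourier-coefficient weights $|\nu_j(n)|^2$. The paper does this via the Luo--Sarnak device: choose the test function $\phi_{X,T}$ so that $\hat\phi_{X,T}(t)=X^{it}e^{-t/T}+O(e^{-\pi t})$, apply Kuznetsov for each fixed $n$, take the mean square in $X$ of the Kloosterman side (this is where the oscillation in $X$ is exploited, together with Weil and the Hardy--Littlewood--P\'olya inequality), and only then average over $n\in[N,2N]$ to replace $\sum_n|\nu_j(n)|^2$ by $cN+r(t_j,N)$ and strip the coefficients back out. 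This yields $\tfrac{1}{A}\int_A^{2A}|R(X,T)|^2\,dX\ll A^{1/4}T^{2+\eps}$, and the choice $T=A^{3/8}$ finishes the proof. Your sketch also does not mention the unsmoothing step (passing from $\sum X^{it_j}e^{-t_j/T}$ to the sharp sum $R(X,T)$), which requires a separate argument. Finally, your closing accounting (``diagonal contribution of size $\asymp A\cdot A$'') does not match the actual balance of terms; the exponent $5/4$ comes from equating $A^{5/4+\eps}$ (from the Kuznetsov bound after partial summation) against the truncation error $A^2T^{-2}\log^2 A$.
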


\begin{rmk}\label{intro:rmk002}
The estimate in \eqref{intro:V2:eq041}
is currently the best known pointwise upper bound
in the case of general cofinite Fuchsian groups.
For the modular group and congruence groups it is possible to
prove sharper estimates.
Iwaniec \cite[Th. 2]{iwaniec_prime_1984}, Luo and Sarnak \cite[Th. 1.4]{luo_quantum_1995},
Cai \cite[p. 62]{cai_prime_2002}, and Soundararajan and Young \cite[Th. 1.1]{soundararajan_prime_2013}
have worked on reducing the exponent for the modular group $G=\PSL(2,\ZZ)$.
The currently best known result is
\[
P_{G}(X) \ll X^{\eta+\eps},\quad\text{with}\quad \eta = \frac{25}{36}\quad\text{and for all}\quad\eps>0,
\]
due to Soundararajan and Young. The exponent $7/10$, proved by Luo and Sarnak,
holds also for congruence groups, see \cite[Cor. 1.2]{luo_selbergs_1995},
and for cocompact groups coming from quaternion algebras, see \cite{koyama_prime_1998}.

Observe that not only the exponent $\eta=1/2$ is out of reach,
but it seems to be a hard problem reaching and, afterwards, breaking the barrier $\eta=2/3$.
Iwaniec suggested (\cite[p. 188]{iwaniec_nonholomorphic_1984}, \cite[eq. (12)]{iwaniec_prime_1984}) that this follows from the assumption
of the Lindel\"of hypothesis for Rankin L-functions.
In Theorem \ref{intro:thm01} we prove the bound with the exponent $2/3$ on average, unconditionally, for every cofinite Fuchsian group.
Theorem \ref{intro:thm02} is saying that $P_G(X)\ll X^{5/8+\eps}$ on average. This goes halfway between the trivial bound \eqref{intro:V2:eq041}
and the conjectural estimate \eqref{1212:eq001}.
\end{rmk}

In the proof of Theorem \ref{intro:thm02} we use a truncated formula for $\psi_\Gamma(X)$,
proved by Iwaniec \cite[Lemma 1]{iwaniec_prime_1984},
condensing the analysis of the Selberg trace formula, and
relating $P_\Gamma(X)$ to the spectral exponential sum
\begin{equation*}
R(X,T) = \sum_{t_j\leq T} X^{it_j}.
\end{equation*}
The trivial bound for this sum, in view of Weyl's law
(see e.g. \cite[Cor. 11.2]{iwaniec_spectral_2002} or \cite[Th. 7.3]{venkov_spectral_1990}),
is $|R(X,T)|\ll T^2$.
Reducing the exponent of $T$ is possible at the cost of some power of $X$
(see \cite{iwaniec_prime_1984,luo_quantum_1995}),
and Petridis and Risager conjectured \cite[Conj. 2.2]{petridis_local_2014}
that we should have square root cancellation, that is,
\[R(X,T) \ll T^{1+\eps} X^\eps\]
for every $\eps>0$. This would give the conjectural bound \eqref{1212:eq001} for the modular group.
In the appendix to \cite{petridis_local_2014} Laaksonen provides numerics that support this conjecture.
In the proof of Theorem~\ref{intro:thm02}
(see Proposition~\ref{sharpsumbnd})
we prove that we have
\begin{equation*}
\frac{1}{A} \int_A^{2A} |R(X,T)|^2 dX \ll T^{2+\eps} A^{1/4+\eps},
\end{equation*}
and hence $|R(X,T)|\ll T^{1+\eps} X^{1/8+\eps}$ on average.

\medskip

The mean square estimate in Theorem \ref{intro:thm02} reduces
to proving a similar estimate for certain weighted sums of Kloosterman sums.
We obtain the desired bounds in a relatively clean way by applying
the Hardy-Littlewood-P\'olya inequality \cite[Th. 381, p. 288]{hardy_inequalities_1934}
in a special case: for $0<\lambda<1$, $\lambda=2(1-p^{-1})$, and $\{a_r\}$ a sequence of non-negative numbers,
then we have
\begin{equation}\label{HLP}
\sum_{r,s\atop r\neq s} \frac{a_r a_s}{|r-s|^\lambda} \ll_\lambda \Big(\sum_{r} a_r^p\Big)^{2/p}.
\end{equation}
With more work it is possible to sharpen Theorem \ref{intro:thm02}
by replacing $A^\eps$ by some power of $\log A$. To do this,
one can use a version of \eqref{HLP} with explicit implied constant
proved by Carneiro and Vaaler \cite[Cor. 7.2 eq. (7.20)]{carneiro_extremal_2010},
or the extremal case of \eqref{HLP} with $(\lambda,p)=(1,2)$,
and a logarithmic correction, proved by Li and Villavert \cite{li_extension_2011}.

\medskip

We also note that we simply use the Weil bound when estimating the weighted sums of Kloosterman sums and we do not exploit any cancellation amongst the Kloosterman sums. Exploring this phenomenon could lead to a power-saving improvement of Theorem~\ref{intro:thm02}.

\medskip

For the proof of Theorem \ref{intro:thm01} we use the Selberg trace formula with a suitably chosen test function that fullfills our needs.
In the general case of $\Gamma$ cofinite, unlike in the modular group case, Iwaniec's truncated formula for $\psi_\Gamma(X)$ is not available.
It is probably possible to prove an analogous result, but we preferred to work directly with the trace formula.
In fact, in order to prove his formula, Iwaniec uses some arithmetic information on the structure of the lengths of closed geodesics
on the modular surface \cite[Lemma 4]{iwaniec_prime_1984},
namely their connection with primitive binary quadratic forms (proved by Sarnak \cite[\S 1]{sarnak_class_1982}),
and it is unclear whether a similar argument works for the general cofinite case.

\begin{rmk}
We do not discuss the first moment of $P_\Gamma(X)$. 
Following the argument of \cite[Th. 1.1 and Th. 4.1]{phillips_circle_1994}
it should be possible to prove that
for every cofinite Fuchsian group $\Gamma$ there exists a constant $L_\Gamma$
such that we have
\begin{equation*}
\lim_{A\to\infty}
\frac{1}{\log A}\int_1^A \left(\frac{P_\Gamma(X)}{X^{1/2}}\right) \frac{dX}{X}
=
L_\Gamma.
\end{equation*}
\end{rmk}

%%%%%%%%%%%%%%%%%%%%%%%%%%%%%%%%%%%%%%%%%%%%%%%%%%%%%%%%%%%%%

\section{Proof of Theorem \ref{intro:thm01}}\label{section02}

In this section we explain the structure of the proof of Theorem \ref{intro:thm01}.
We decided to keep this section to a more colloquial tone,
and to relegate the technical computations to the next section.
Hence the proof of the theorem is completed in section~\ref{section03}.

The starting point of our analysis is the Selberg trace formula,
that we recall from the book of Iwaniec (see \cite[Th. 10.2]{iwaniec_spectral_2002}).
An even function~$g$ is said to be an admissible test function in the trace formula if
its Fourier transform $h(t)$ (see \eqref{intro:eq008} for the convention used
to define the Fourier transform)
satisfies the conditions (\cite[eq. (1.63)]{iwaniec_spectral_2002})
\begin{equation}\label{intro:eq001}
\begin{aligned}
& h(t) \text{ is even},\\
& h(t) \text{ is holomorphic in the strip } |\Im(t)|\leq 1/2+\eps,\\
& h(t) \ll (1+|t|)^{-2-\eps} \text{ in the strip.}
\end{aligned}
\end{equation}
For an admissible test function $g$, the Selberg trace formula is the identity
\begin{equation}\label{selberg-trace-formula}
\sum_{P} \; \frac{g(\log NP)}{2\sinh((\log NP)/2)} \; \Lambda_\Gamma(NP)
=
IE + EE + PE + DS + CS + AL,
\end{equation}
where the sum on the left runs over hyperbolic conjugacy classes in $\Gamma$,
and the terms appearing on the right are explained as follows.

The term $IE$ denotes a contribution coming from the identity element,
which forms a conjugacy class on its own. We have
\begin{equation*}
IE = -\frac{\vol(\GmodH)}{4\pi}\int_{-\infty}^{+\infty} th(t)\tanh(\pi t)dt.
\end{equation*}
The term $EE$ denotes a contribution from the elliptic conjugacy classes in $\Gamma$
(there are only finitely many such classes).
Denote by $\mathcal{R}$ a primitive elliptic conjugacy class, and let $m=m_\mathcal{R}>1$ be the order of $\mathcal{R}$.
We have
\begin{equation*}
EE = -\sum_\mathcal{R}\sum_{1\leq \ell<m} \big(2m\sin(\pi\ell/m)\big)^{-1}
\int_{-\infty}^{+\infty} h(t) \frac{\cosh(\pi(1-2\ell/m)t)}{\cosh(\pi t)} dt.
\end{equation*}
The term $PE$ denotes a contribution from the parabolic conjugacy classes,
which are associated to the cusps of $\Gamma$.
Let $\mathfrak{C}$ denote the total number of inequivalent cusps. Then we have
\begin{equation*}
PE = \frac{\mathfrak{C}}{2\pi}\int_{-\infty}^{+\infty} h(t) \psi(1+it)dt + \mathfrak{C}g(0)\log 2.
\end{equation*}
Here $\psi(z)$ is the digamma function, i.e. the logarithmic derivative
of the Gamma function, $\psi(z)=(\Gamma'/\Gamma)(z)$.
The term $DS$ corresponds to the discrete spectrum, and is given by
\begin{equation*}
DS = \sum_{t_j} h(t_j),
\end{equation*}
where the sum runs over the spectral parameter $t_j$, associated to the
eigenvalue $\lambda_j$ of the Laplace operator via the identity $\lambda_j=1/4+t_j^2$
(here and in the following we assume that $t_j\in [0,i/2]$ for $\lambda_j\in [0,1/4]$,
and $t_j>0$ for $\lambda_j>1/4$).
The term $CS$ comes from the continuous spectrum, and is given by
\begin{equation*}
CS = \frac{1}{4\pi}\int_{-\infty}^{+\infty} h(t) \frac{-\varphi'}{\varphi}(1/2+it)dt,
\end{equation*}
where $\varphi(s)$ is the scattering determinant for the group $\Gamma$
(see \cite[p. 140]{iwaniec_spectral_2002}).
Finally, the term $AL$ is a single term that comes from a combination
of the spectral part and the geometric part (see \cite[eq. (10.11) and (10.17)]{iwaniec_spectral_2002}),
and it is defined by
\(AL = h(0)\Tr(\Phi(1/2)-\mathrm{I})/4\),
where $\Phi(s)$ is the scattering matrix associated to $\Gamma$.

A first naive choice of test function in the Selberg trace formula \eqref{selberg-trace-formula} is
\begin{equation}\label{eq:g_s}
g_\rho(x)= 2\sinh\left(\frac{|x|}{2}\right)\mathbf{1}_{[0,\rho]}(|x|),
\end{equation}
where $\rho=\log X$. If we choose $g(x)$ in this way,
then the left hand side of \eqref{selberg-trace-formula} reduces exactly
to the function $\psi_\Gamma(X)$.
Unfortunately the function $g_\rho(x)$ in \eqref{eq:g_s}
is not an admissible test function, and we need therefore to take
a suitable approximation of it.
An analysis of the right hand side in \eqref{selberg-trace-formula}
then leads to the desired results.
To see that $g_\rho$ is not admissible in the trace formula,
consider its Fourier transform $h_\rho(t)$. We have
\begin{equation}\label{intro:eq008}
h_\rho(t)
=
\int_\RR g_\rho(x)e^{-itx}dx
=
2\int_0^\rho \sinh\left(\frac{x}{2}\right) (e^{itx}+e^{-itx})dx.
\end{equation}
The integral can be computed directly, which gives, for $t=\pm i/2$,
\begin{equation}\label{eq005}
h_\rho(\pm i/2) = 4\sinh^2(\rho/2),
\end{equation}
and, for $t\neq \pm i/2$,
\begin{equation}\label{eq004}
h_\rho(t)
=
\frac{2}{1/2+it}\cosh\left(\rho\big(1/2+it\big)\right)
+
\frac{2}{1/2-it}\cosh\left(\rho\big(1/2-it\big)\right)
-
\frac{2}{1/4+t^2}.
\end{equation}
The last integral in \eqref{intro:eq008} shows that $h_\rho(t)$
is even and entire in $t$, but from \eqref{eq004} we see that
we only have $h_\rho(t)\ll t^{-1}$ as $t$ tends to infinity,
and so we do not have sufficient decay as required in \eqref{intro:eq001}.

We construct two functions $g_\pm(x)$
that approximate from above and below the function $g_\rho$,
and are admissible in the Selberg trace formula.
Let $q(x)$ be an even, smooth, non-negative function on $\RR$
with compact support contained in $[-1,1]$ and unit mass (i.e. $\|q\|_1=1$).
Let $0<\delta<1/4$ and define
\begin{equation*}
q_\delta(x)=\frac{1}{\delta}q\left(\frac{x}{\delta}\right).
\end{equation*}
For $\rho>1$ define $g_\pm$ to be the convolution
product of the shifted function $g_{\rho\pm\delta}$ with the function $q_\delta$,
namely
\begin{equation}\label{2.13A}
g_\pm(x) = (g_{\rho\pm\delta}\ast q_\delta)(x)=\int_\RR g_{\rho\pm\delta}(x-y)q_\delta(y)dy.
\end{equation}
Taking convolution products has the advantage that the Fourier transform
of the convolution is the pointwise product of the Fourier transforms
of the two factors. Hence if we denote by
$\widehat{q}_\delta(t)$ the Fourier transform of $q_\delta$, that is,
\begin{equation}\label{3001:eq001}
\widehat{q}_\delta(t) = \int_\RR q_\delta(x) e^{-itx} dx,
\end{equation}
and by $h_\pm$ the Fourier transform of $g_\pm$,
then we obtain
\begin{equation}\label{2.14A}
h_\pm(t) = h_{\rho\pm\delta}(t)\wh{q}_\delta(t).
\end{equation}
Since the function $\wh{q}_\delta(t)$ is entire and satisfies, for $|\Im(t)|\leq M<\infty$,
\[
\wh{q}_\delta(t)\ll \frac{1}{1+\delta^k |t|^k}\quad\forall\;k\geq 0
\]
(see Lemma \ref{lemma001}), we conclude that $h_\pm(t)$ is an entire function
and satisfies $h_\pm(t)\ll (1+|t|)^{-2-\eps}$ for some $\eps>0$
in the strip $|\Im(t)|\leq 1/2+\eps$, as required in \eqref{intro:eq001}.
This shows that the function $g_\pm$ is an admissible test function in the trace formula.
By construction, the function $g_\pm(x)$ is supported on $|x|\in[0,\rho+\delta\pm\delta]$.
Moreover we have the inequalities (see Lemma \ref{lemma002}), for $x\geq 0$,
\begin{equation*}\label{intro:eq009}
g_{-}(x) + O(\delta e^{x/2}\mathbf{1}_{[0,\rho]}(x))
\leq
g_\rho(x)
\leq
g_{+}(x) + O(\delta e^{x/2}\mathbf{1}_{[0,\rho]}(x)).
\end{equation*}
If we set
\[
\psi_\pm(X)
=
\sum_{P} \; \frac{g_\pm(\log NP)}{2\sinh((\log NP)/2)} \; \Lambda_\Gamma(NP),
\]
then using the asymptotic $\psi_\Gamma(X)\sim X$
we conclude that we have the inequalities
\begin{equation*}
\psi_{-}(X) + O(\delta X) \leq \psi_\Gamma(X) \leq \psi_{+}(X) + O(\delta X).
\end{equation*}
From this we deduce that we have
\begin{equation}\label{outline:eq010}
\frac{1}{A}\int_A^{2A} |P(X)|^2 dX
\ll
\frac{1}{A}\int_A^{2A} |\psi_\pm(X)-M_\Gamma(X)|^2 dX + O(\delta^2 A^2),
\end{equation}
and in order to prove Theorem \ref{intro:thm01} we give bounds on the right
hand side in \eqref{outline:eq010}.
We found convenient to pass to the logarithmic variable $\rho=\log X$,
since this simplifies slightly the computations.
Moreover, we insert a weight function in the integral to pass from the sharp
mean square to a smooth one.
Consider a smooth, non-negative real function $w(\rho)$, compactly supported in $[-\eps,1+\eps]$,
for $0<\eps<1/4$. In addition, assume that $0\leq w(\rho)\leq 1$, and that $w(\rho)=1$ for $\rho\in[0,1]$.
Let
\begin{equation}\label{w_R}
w_R(\rho)=w(\rho-R). 
\end{equation}
In view of the inequality
\begin{equation}\label{outline:eq012}
\frac{1}{A}\int_A^{2A} |\psi_\pm(X)-M_\Gamma(X)|^2 dX
\ll
\int_\RR |\psi_\pm(e^\rho)-M_\Gamma(e^\rho)|^2 w_R(\rho)d\rho, 
\end{equation}
where $R=\log A$, we see that in order to estimate the last integral in \eqref{outline:eq010}
it suffices to upper bound the integral on the right in \eqref{outline:eq012}.
In the following, abusing notation, we write $\psi_\pm(\rho)$ in place of $\psi_\pm(e^\rho)$, 
and $M_\Gamma(\rho)$ in place of $M_\Gamma(e^\rho)$.

At this point we can exploit the Selberg trace formula to analyze $\psi_\pm$.
The terms of the discrete spectrum $DS$ associated to the
small eigenvalues $\lambda_j\in[0,1/4]$
(corresponding to the spectral parameter $t_j$ in the interval $[0,i/2]$,
and for technical convenience we include the eigenvalue $\lambda_j=1/4$)
need particular care.
We define $M_\pm(\rho)$ to be the sum of such terms, namely 
\[
M_\pm (\rho) := \sum_{t_j\in[0,i/2]} h_\pm(t_j). 
\]
In view of the definition of the complete main term in \eqref{1208:eq003}
it is easy to prove (see Lemma \ref{lemma003}) that we have
\[
M_\pm(\rho) = M_\Gamma(\rho) + O(\delta e^\rho + e^{\rho/2}). 
\]
Hence we have
\begin{align}
\psi_\pm(\rho)-M_\Gamma(\rho) 
&=
\psi_\pm(\rho)-M_\pm(\rho) + O(\delta e^\rho + e^{\rho/2}) 
\nonumber
\\
&=
IE_\pm + EE_\pm + PE_\pm + DS_\pm' + CS_\pm + AL_\pm + O(\delta e^\rho + e^{\rho/2}),\label{outline:eq013} 
\end{align}
where the term $DS_\pm'$ denotes now the contribution from the discrete spectrum
restricted to the eigenvalues $\lambda_j>1/4$.
The first three terms  and the term $AL_\pm$ in \eqref{outline:eq013}
can be bounded pointwise
(see Lemma \ref{lemma004}\,--\,\ref{lemma09})
by
\[
|IE_\pm| + |EE_\pm| + |PE_\pm| + |AL_\pm|
\ll
e^{\rho/2} + \log(\delta^{-1}). 
\]
Squaring and integrating in \eqref{outline:eq013} we obtain therefore
\begin{equation}\label{outline:eq014}
\begin{split}
\int_\RR |\psi_\pm(\rho)-M_\Gamma(\rho)|^2 w_R(\rho)d\rho 
&\ll
\int_\RR |DS_\pm'|^2 w_R(\rho) d\rho + \int_\RR |CS_\pm|^2 w_R(\rho) d\rho 
\\
&+
O\big(
e^R + \delta^2 e^{2R} + \log^2(\delta^{-1})
\big).
\end{split}
\end{equation}
In Proposition \ref{prop01} and Proposition \ref{prop02} we prove that the following estimate holds:
\begin{equation}\label{outline:eq015}
\int_\RR |DS_\pm'|^2 w_R(\rho) d\rho 
+
\int_\RR |CS_\pm|^2 w_R(\rho) d\rho 
\ll
\frac{e^R}{\delta} + e^{R/2}\log^2(\delta^{-1}).
\end{equation}
Combining \eqref{outline:eq014} and \eqref{outline:eq015} we obtain
\[
\int_\RR |\psi_\pm(\rho)-M_\Gamma(\rho)|^2 w_R(\rho)d\rho 
\ll
\frac{e^R}{\delta} + e^{R/2}\log^2(\delta^{-1}) + \delta^2 e^{2R},
\]
and choosing $\delta=e^{-R/3}$ to optimize the first and the last term,
we arrive at the bound
\[
\int_\RR |\psi_\pm(s)-M_\Gamma(s)|^2 w_R(\rho)d\rho 
\ll
e^{4R/3}.
\]
Recalling \eqref{outline:eq010} and \eqref{outline:eq012},
and setting $R=\log A$, we conclude that we have
\[
\frac{1}{A}\int_A^{2A} |P(X)|^2 dX
\ll
\int_\RR |\psi_\pm(\rho)-M_\Gamma(\rho)|^2 w_R(\rho)d\rho + O(\delta^2 A^2) 
\ll
A^{4/3}.
\]
This proves Theorem \ref{intro:thm01}.

%%%%%%%%%%%%%%%%%%%%%%%%%%%%%%%%%%%%%%%%%%%%%%%%%%%%%%%%%%%%%%%

%%%%%%%%%%%%%%%%%%%%%%%%%%%%%%%%%%%%%%%%%%%%%
%%%%%%%    TECHNICAL  LEMMATA      %%%%%%%%%%

\section{Technical lemmata}\label{section03}

%%%%%%%%%%%%%%%%%%%%%%%%%%%%%%%%%%%%%%%%%%%%%

In this section we prove the auxiliary results needed in section \ref{section02}
to prove Theorem \ref{intro:thm01}.
We start with a simple computation to bound the function $\widehat{q}_\delta(t)$ and its derivatives.

%%%%%%%%%%%%%%%%%%%%%%%%%%%%%%%%%%%%%%%%%%%%%%%%%
%%%  LEMMA 1: DERIVATIVES OF \hat{q}_\delta  %%%%

\begin{lemma}\label{lemma001}
Let $0<\delta<1/4$, $\widehat{q}_\delta(t)$ as in \eqref{3001:eq001}, and let $t\in\RR$.
Let $j,k\in\NN$, $j,k\geq 0$. We have
\begin{equation*}
\Big|\frac{d^j\widehat{q}_\delta}{dt^j}(t)\Big|
\ll
\frac{\delta^j}{1+|\delta t|^k},
\end{equation*}
and the implied constant depends on $j$ and $k$.
\end{lemma}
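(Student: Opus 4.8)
The plan is to bound $\widehat{q}_\delta(t)$ and its derivatives by reducing everything to the corresponding statement for the fixed function $q$, whose Fourier transform $\widehat{q}$ is a Schwartz function because $q$ is smooth and compactly supported. First I would record the scaling relation: since $q_\delta(x)=\delta^{-1}q(x/\delta)$, a change of variables in \eqref{3001:eq001} gives $\widehat{q}_\delta(t)=\widehat{q}(\delta t)$. Differentiating $j$ times in $t$ then yields $\frac{d^j}{dt^j}\widehat{q}_\delta(t)=\delta^j\,\widehat{q}^{(j)}(\delta t)$, which already isolates the factor $\delta^j$ claimed in the statement.

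Next I would control $\widehat{q}^{(j)}$. Because $q$ is smooth with compact support in $[-1,1]$, each derivative of $\widehat{q}$ is again the Fourier transform of a compactly supported smooth function: indeed $\widehat{q}^{(j)}(u)=\int_\RR (-ix)^j q(x)e^{-iux}\,dx$, and integrating by parts $k$ times in $x$ (the boundary terms vanish since the support is compact) produces a factor $(iu)^{-k}$ times the Fourier transform of $\frac{d^k}{dx^k}\big((-ix)^j q(x)\big)$. The latter function is continuous with compact support, hence its Fourier transform is bounded by a constant $C_{j,k}$ depending only on $j$ and $k$. Therefore $|\widehat{q}^{(j)}(u)|\ll_{j,k} |u|^{-k}$ for $|u|\geq 1$, and trivially $|\widehat{q}^{(j)}(u)|\ll_j 1$ for all $u$ (take $k=0$). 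Combining the two regimes gives the uniform bound $|\widehat{q}^{(j)}(u)|\ll_{j,k} (1+|u|)^{-k}$, which is comparable to $(1+|u|^k)^{-1}$.

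Finally I would substitute $u=\delta t$ into the last display and multiply by $\delta^j$, obtaining
\[
\Big|\frac{d^j\widehat{q}_\delta}{dt^j}(t)\Big|
=\delta^j\,\big|\widehat{q}^{(j)}(\delta t)\big|
\ll_{j,k}\frac{\delta^j}{1+|\delta t|^k},
\]
which is exactly the asserted inequality. I do not expect any genuine obstacle here; the only points requiring a little care are justifying differentiation under the integral sign (immediate from dominated convergence, since $x^j q(x)$ is integrable) and checking that no boundary terms appear in the repeated integration by parts (immediate from $\operatorname{supp} q\subseteq[-1,1]$). The statement is purely a routine scaling-plus-Schwartz-decay computation, which is why it is placed as the first technical lemma.
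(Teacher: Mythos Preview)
Your proof is correct and follows essentially the same route as the paper: differentiate under the integral sign to extract the factor $\delta^j$, then integrate by parts $k$ times to obtain the decay $|\delta t|^{-k}$, and finally combine the trivial bound with the decay bound. The only cosmetic difference is that you first record the scaling identity $\widehat{q}_\delta(t)=\widehat{q}(\delta t)$ and work with the fixed function $q$, whereas the paper carries the parameter $\delta$ through the integral directly.
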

%%%%%%%%%%%%%%%%%%%%%%%%%%%%%%%%%%%%%%%%
%%%%%     PROOF OF LEMMA 1      %%%%%%%%
\begin{proof}
From the definition of $\wh{q}_\delta(t)$ we have
\begin{equation}\label{eq001}
\frac{d^j\widehat{q}_\delta}{dt^j}(t)
=
\frac{d^j}{dt^j} \int_\RR q(x) e^{-it\delta x} dx
=
(-i\delta)^j\int_\RR x^j q(x) e^{-it\delta x}dx.
\end{equation}
Bounding in absolute value we get
\begin{equation}\label{eq002}
\frac{d^j\widehat{q}_\delta}{dt^j}(t) \ll \delta^j.
\end{equation}
Integrating by parts in the last integral in \eqref{eq001}, and using that $q(x)$
is smooth and has compact support, we obtain instead
\begin{equation}\label{eq003}
\frac{d^j\widehat{q}_\delta}{dt^j}(t)
=
(-1)^k \frac{(-i\delta)^j}{(-it\delta)^k} \int_\RR e^{-it\delta x} \frac{d^k}{dx^k}(x^j q(x)) dx
\ll
\frac{\delta^j}{|\delta t|^k}.
\end{equation}
Combining \eqref{eq002} and \eqref{eq003} we obtain
\begin{equation*}
\frac{d^j\widehat{q}_\delta}{dt^j}(t)
\ll
\delta^j\min(1,|\delta t|^{-k})\ll\frac{\delta^j}{1+|\delta t|^k}.
\end{equation*}
This proves the lemma.
\end{proof}
%%%%%%      END OF PROOF OF LEMMA 1      %%%%%%%%
%%%%%%%%%%%%%%%%%%%%%%%%%%%%%%%%%%%%%%%%%%%%%%%%%

The function $\widehat{q}_\delta(t)$ is used to construct the convolution product
$g_\pm=g_{\rho\pm\delta}\ast q_\delta$ that approximate the function $g_\rho$. 
In the next lemma we show that $g_\rho$ is bounded from above and below by $g_{+}$ and $g_{-}$ 
respectively, up to a small error.

%%%%%%%%%%%%%%%%%%%%%%%%%%%%%%%%%%%%%%%%%%%%%%%%%%
%%% LEMMA 2: INEQUALITY g_{-} <= g_s <= g_{+}  %%%
\begin{lemma}\label{lemma002}
Let $\rho>1$ and $0<\delta<1/4$. 
Let $g_\rho(x)$ and $g_\pm(x)$ as in \eqref{eq:g_s} and \eqref{2.13A} respectively. 
For $x\geq 0$ we have
\begin{equation}\label{eq:032}
g_{-}(x) + O(\delta e^{x/2}\mathbf{1}_{[0,\rho]}(x)) 
\leq
g_\rho(x) 
\leq
g_{+}(x) + O(\delta e^{x/2}\mathbf{1}_{[0,\rho]}(x)). 
\end{equation}
\end{lemma}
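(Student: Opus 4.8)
The plan is to exploit that $q_\delta$ is supported on $[-\delta,\delta]$ (since $q$ is supported in $[-1,1]$), so that the convolution defining $g_\pm$ in \eqref{2.13A} reduces to $g_\pm(x)=\int_{-\delta}^{\delta}g_{s\pm\delta}(x-y)q_\delta(y)\,dy$, and then to compare this integrand with $g_s(x)=2\sinh(|x|/2)\mathbf 1_{[0,s]}(|x|)$ by keeping careful track of the indicator functions $\mathbf 1_{[0,s\pm\delta]}(|x-y|)$. Throughout one fixes $x\ge 0$; since $|y|\le\delta$ this forces $|x-y|\le x+\delta$, and since $s>1>\delta$ one also has $x-y\ge 0$ whenever $x\ge s$.

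For the right-hand inequality in \eqref{eq:032} I would split into the ranges $x>s$ and $0\le x\le s$. When $x>s$ it is trivial, since $g_s(x)=0$ while $g_+(x)\ge 0$ as a convolution of non-negative functions. When $0\le x\le s$, the key observation is that $|x-y|\le s+\delta$ for all $|y|\le\delta$, so $\mathbf 1_{[0,s+\delta]}(|x-y|)=1$ and hence $g_+(x)=\int_{-\delta}^{\delta}2\sinh(|x-y|/2)q_\delta(y)\,dy$. Using $\sinh(|u|/2)\ge\sinh(u/2)$, then expanding $\sinh((x-y)/2)=\sinh(x/2)\cosh(y/2)-\cosh(x/2)\sinh(y/2)$ and using that $q_\delta$ is even (so the second term integrates to zero), together with $\cosh(y/2)\ge 1$ and $\|q_\delta\|_1=1$, one obtains $g_+(x)\ge 2\sinh(x/2)=g_s(x)$; in fact no error term is needed on this side.

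For the left-hand inequality I would split the same way. When $x>s$, every $|y|\le\delta$ gives $|x-y|=x-y\ge x-\delta>s-\delta$, so $\mathbf 1_{[0,s-\delta]}(|x-y|)=0$ and therefore $g_-(x)=0=g_s(x)$. When $0\le x\le s$, I would drop the indicator $\mathbf 1_{[0,s-\delta]}(|x-y|)\le 1$ and use $|x-y|\le x+\delta$ to get $g_-(x)\le\int_{-\delta}^{\delta}2\sinh(|x-y|/2)q_\delta(y)\,dy\le 2\sinh((x+\delta)/2)$; expanding by the addition formula and using $\cosh(\delta/2)=1+O(\delta^2)$, $\sinh(\delta/2)=O(\delta)$, and $\sinh(x/2),\cosh(x/2)\ll e^{x/2}$, this is $2\sinh(x/2)+O(\delta e^{x/2})=g_s(x)+O(\delta e^{x/2})$, with $\mathbf 1_{[0,s]}(x)=1$ on this range.

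I do not expect a genuine obstacle here: the only delicate point is the bookkeeping of the indicator functions near the transition $x\approx s$, namely verifying that the shift by $\pm\delta$ in $g_{s\pm\delta}$ exactly absorbs the spread $[-\delta,\delta]$ of $q_\delta$, so that $\mathbf 1_{[0,s+\delta]}(|x-y|)=1$ for all $0\le x\le s$ while $\mathbf 1_{[0,s-\delta]}(|x-y|)=0$ for all $x>s$. Everything else reduces to the elementary estimate $|\sinh((x\pm\delta)/2)-\sinh(x/2)|\ll\delta e^{x/2}$ for $0\le x\le s$.
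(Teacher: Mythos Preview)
Your proof is correct and follows essentially the same strategy as the paper: split according to the position of $x$ relative to $s$, track the indicator $\mathbf 1_{[0,s\pm\delta]}(|x-y|)$ over $|y|\le\delta$, and use the addition formula for $\sinh$. Your organization is in fact slightly cleaner: by exploiting the evenness of $q_\delta$ (so that $\int\sinh(y/2)q_\delta(y)\,dy=0$) together with $\cosh(y/2)\ge 1$, you obtain the exact inequality $g_+(x)\ge g_s(x)$ without any error term, whereas the paper bounds both $g_\pm$ simultaneously via a parametrized integration domain $Q=[\max\{-\delta,x-(s\pm\delta)\},\delta]$ and carries the $O(\delta e^{x/2})$ error on both sides.
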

%%%%%%%%%%%%%%%%%%%%%
%% PROOF OF LEMMA  %%
\begin{proof}
Observe that $g_\pm = g_\rho\ast q_\delta$, and since both factors in the convolution product are 
non-negative, we conclude that $g_\pm(x)\geq 0$ for every $x\in\RR$.
Observe also that $g_\pm$ is supported on the compact set $\{|x|\leq \rho+\delta\pm\delta\}$. 
By definition of $g_\pm$ we can therefore write, for $x\geq 0$,
\[
\begin{aligned}
g_\pm(x)
&=
2\int_\RR \mathbf{1}_{[0,\rho\pm\delta]}(|x-y|)\sinh\left(\frac{|x-y|}{2}\right)q_\delta(y)dy 
\\
&=
2\cdot\mathbf{1}_{[0,\rho+\delta\pm\delta]}(x) \int_Q \sinh\left(\frac{|x-y|}{2}\right)q_\delta(y)dy, 
\end{aligned}
\]
where $Q=[\max\{-\delta,x-(\rho\pm\delta)\},\delta]$. 
For $0\leq x\leq \delta$ we have
\[g_\rho(x)=O(\delta)\quad\text{and}\quad g_\pm(x)=O(\delta),\] 
so that \eqref{eq:032} holds trivially.

For $x>\delta$ we have $x-y>0$ for every $y\in Q$, and using the addition formula
for the hyperbolic sine we can write
\begin{align}
g_\pm(x)
&=
2\cdot\mathbf{1}_{[0,\rho+\delta\pm\delta]}(x) \int_Q \big[\cosh(y/2)\sinh(x/2)-\cosh(x/2)\sinh(y/2)\big]q_\delta(y)dy 
\nonumber
\\
&=
2\cdot\mathbf{1}_{[0,\rho+\delta\pm\delta]}(x) \sinh\left(\frac{x}{2}\right) 
\int_Q q_\delta(y)dy
+
O(\delta e^{x/2}\mathbf{1}_{[0,\rho+\delta\pm\delta]}(x)). 
\label{eq:033}
\end{align}
Now for $\delta<x<\rho\pm\delta-\delta$ we have $Q=[-\delta,\delta]$, so that 
\eqref{eq:033} reduces to
\[g_\pm(x)= g_\rho(x) + O(\delta e^{x/2}\mathbf{1}_{[0,\rho+\delta\pm\delta]}(x)).\] 
For $\rho-2\delta\leq x\leq \rho$ we can bound by positivity in \eqref{eq:033} 
\[g_{-}(x)\leq g_\rho(x)+O(\delta e^{x/2}\mathbf{1}_{[0,\rho]}(x)),\] 
so that the first inequality in \eqref{eq:032} holds in this case.
Finally for $\rho\leq x\leq \rho+2\delta$ we observe that $g_\rho(x)=0$ and $g_{+}(x)\geq 0$, 
and so we conclude that the second inequality in \eqref{eq:032}
holds in this case.
This proves the lemma.
\end{proof}
%%%  END OF PROOF INEQUALITY g_\pm %%%
%%%%%%%%%%%%%%%%%%%%%%%%%%%%%%%%%%%%%%

The function $h_{\pm}(t)$ associated to $g_\pm(x)$ gives, for $t$ corresponding
to small eigenvalues, the terms appearing in the definition of the main term $M_\Gamma(X)$ in \eqref{1208:eq003},
up to small error. We prove this in the next lemma.

%%%%%%%%%%%%%%%%%%%%%%%%%%%%%%%%%%%%%%%%%%%%%%%%%
%%%%  LEMMA 3: SMALL EIGENVALUES FOR h_\pm   %%%%
\begin{lemma}\label{lemma003}
Let $\rho>1$, $0<\delta<1/4$, and let $h_\pm(t)$ as in \eqref{2.14A}. 
Let $t_j\in(0,i/2]$ be the
spectral parameter associated to the eigenvalue $\lambda_j\in[0,1/4)$.
There exists $0<\eps_\Gamma<1/4$ such that
\begin{equation}\label{lemma003:equation}
h_\pm(t_j)= \frac{e^{\rho(1/2+|t_j|)}}{1/2+|t_j|} + O(\delta e^\rho + e^{\rho(1/2-\eps_\Gamma)}). 
\end{equation}
\end{lemma}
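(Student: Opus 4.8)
The plan is to evaluate $h_\pm(t_j)=h_{s\pm\delta}(t_j)\wh{q}_\delta(t_j)$ directly from the closed formulae \eqref{eq005} and \eqref{eq004}. Since $t_j\in(0,i/2]$ is purely imaginary, I would write $t_j=i\beta$ with $\beta=|t_j|\in(0,1/2]$, and set $S=s\pm\delta$, so that $\delta<1/4$ gives $e^{S\alpha}=e^{s\alpha}\bigl(1+O(\delta)\bigr)$ uniformly for $0\le\alpha\le1$. First I would dispose of the factor $\wh{q}_\delta(t_j)$: from $\wh{q}_\delta(i\beta)=\int_\RR q_\delta(y)e^{\beta y}\,dy$, the support and unit mass of $q_\delta$, and $e^{\beta y}=1+O(\delta)$ for $|y|\le\delta$, $\beta\le1/2$, one gets $\wh{q}_\delta(t_j)=1+O(\delta)$.

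Next I would compute $h_{S}(t_j)$, splitting into two cases. If $\beta=1/2$, i.e.\ $\lambda_j=0$, then \eqref{eq005} gives $h_S(t_j)=4\sinh^2(S/2)=e^{S}+O(1)=e^{s}+O(\delta e^{s}+1)$, which already matches the claimed main term $e^{s(1/2+\beta)}/(1/2+\beta)=e^{s}$. If $0<\beta<1/2$, I substitute $it_j=-\beta$ into \eqref{eq004}, obtaining
\[
h_{S}(t_j)=\frac{2\cosh\!\bigl(S(1/2+\beta)\bigr)}{1/2+\beta}+\frac{2\cosh\!\bigl(S(1/2-\beta)\bigr)}{1/2-\beta}-\frac{2}{1/4-\beta^{2}}.
\]
The leading contribution is $\frac{1}{1/2+\beta}e^{S(1/2+\beta)}=\frac{1}{1/2+\beta}e^{s(1/2+\beta)}+O(\delta e^{s})$, using $1/2+\beta\le1$; every remaining piece is $O_\Gamma\bigl(e^{s(1/2-\beta)}\bigr)$, since $\frac{1}{1/2+\beta}e^{-S(1/2+\beta)}=O(1)$, the term with $\cosh(S(1/2-\beta))$ is $\ll_\Gamma e^{S(1/2-\beta)}\ll_\Gamma e^{s(1/2-\beta)}$, and $2/(1/4-\beta^{2})$ is a constant for that fixed $\beta$.

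To make this uniform I would invoke the fact that there are only finitely many small eigenvalues, so the values of $\beta$ that occur form a finite subset of $(0,1/2]$; choosing $\eps_\Gamma\in(0,1/4)$ smaller than every such $\beta$ that is strictly below $1/2$ yields $e^{s(1/2-\beta)}\le e^{s(1/2-\eps_\Gamma)}$ (and $1\le e^{s(1/2-\eps_\Gamma)}$ covers the $\beta=1/2$ case, since $s>1$). Thus $h_S(t_j)=\frac{e^{s(1/2+\beta)}}{1/2+\beta}+O\bigl(\delta e^{s}+e^{s(1/2-\eps_\Gamma)}\bigr)$, and multiplying by $\wh{q}_\delta(t_j)=1+O(\delta)$ only changes the main term by $O(\delta e^{s(1/2+\beta)})=O(\delta e^{s})$ and scales the error term by a bounded factor. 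This is precisely \eqref{lemma003:equation}, on noting that $s_j=1/2+\beta$ is the root of $s_j(1-s_j)=\lambda_j$ with $s_j>1/2$, so the main term produced is exactly the term $X^{s_j}/s_j$ of $M_\Gamma(X)$ at $X=e^{s}$.

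I do not expect a genuine obstacle: the computation is routine. The only two points that need a little care are the apparent singularity of \eqref{eq004} at $\beta=1/2$, which is bypassed by treating $\lambda_j=0$ separately via \eqref{eq005}, and the uniformity of $\eps_\Gamma$, which is harmless because only finitely many eigenvalues are involved and each $\beta$ is a fixed spectral invariant of $\Gamma$ bounded away from $0$.
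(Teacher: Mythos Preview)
Your proposal is correct and follows essentially the same approach as the paper: both arguments evaluate $h_\pm(t_j)=h_{s\pm\delta}(t_j)\wh{q}_\delta(t_j)$ directly from \eqref{eq005}--\eqref{eq004}, use the expansion $\wh{q}_\delta(t_j)=1+O(\delta)$ on the imaginary segment, and exploit the finiteness of the small eigenvalues to produce the uniform spectral gap $\eps_\Gamma$. Your version is simply more explicit, in particular by splitting off the case $\beta=1/2$ to avoid the apparent singularity of \eqref{eq004} and by noting that $1/2-\beta$ is bounded away from zero for the remaining eigenvalues, but there is no substantive difference in method.
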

%%%%%%%%%%%%%%%%%%%%%%%%%%%%%%%%%%%%%%%%
%%%%%%   PROOF OF LEMMA 3      %%%%%%%%%
\begin{proof}
Recall that there are only finitely many eigenvalues $\lambda_j$ in $(0,1/4)$,
and therefore there exists $0<\eps_\Gamma<1/4$ such that $|t_j|\geq \eps_\Gamma$
for every $\lambda_j\in(0,1/4)$.
Observe that for $|t|\leq 1/2$ we have
\[\widehat{q}_\delta(t)=\int_\RR q(x)e^{-it\delta x}dx = 1+ O(\delta|t|).\]
The claim then follows from \eqref{eq005} and \eqref{eq004},
and from the fact that $h_\pm(t)=h_{\rho\pm\delta}(t)\widehat{q}_\delta(t)$. 
\end{proof}
\begin{rmk}
Equation \eqref{lemma003:equation} also holds for $\lambda_j=1/4$, except that
we have to multiply the first term by a factor of~$2$.
\end{rmk}
%%%%%%   END OF PROOF OF LEMMA 3    %%%%%%%%
%%%%%%%%%%%%%%%%%%%%%%%%%%%%%%%%%%%%%%%%%%%%

The next three lemmata show that in the problem of estimating $P_\Gamma(X)$ using the
Selberg trace formula we can neglect the terms coming from the identity class,
the elliptic classes, and the parabolic classes, as they contribute a small quantity.

%%%%%%%%%%%%%%%%%%%%%%%%%%%%%%%%%%%%%%%%%%%%%%%%%%
%%%%%   LEMMA 4: IDENTITY CONTRIBUTION    %%%%%%%%
\begin{lemma}\label{lemma004}
Let $\rho>1$, $0<\delta<1/4$, and let 
$g_\pm$ and $h_\pm$ as in \eqref{2.13A} and \eqref{2.14A} respectively.
Then we have
\begin{equation*}
\int_{-\infty}^{+\infty} t h_\pm(t) \tanh(\pi t) dt
\ll
\frac{e^{\rho/2}}{\rho^2} + \log(\delta^{-1}). 
\end{equation*}
\end{lemma}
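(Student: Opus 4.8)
The integral to bound is $\int_{-\infty}^{+\infty} t\, h_\pm(t)\tanh(\pi t)\,dt$, where $h_\pm(t) = h_{s\pm\delta}(t)\wh{q}_\delta(t)$. The plan is to split the integral into two regimes according to the size of $|t|$, say $|t|\leq 1$ and $|t|>1$, since the two parts of the integrand behave quite differently. For $|t|>1$ I would use the explicit formula \eqref{eq004} for $h_{s\pm\delta}(t)$, which shows that $h_{s\pm\delta}(t)$ decomposes into three pieces: two oscillating terms of the shape $(1/2\pm it)^{-1}\cosh((s\pm\delta)(1/2\pm it))$ and the non-oscillating tail $-2/(1/4+t^2)$. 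The crucial point is that $\cosh((s\pm\delta)(1/2+it)) = \tfrac12 e^{(s\pm\delta)/2}(e^{i(s\pm\delta)t} + e^{-(s\pm\delta)(1/2+it)})$, so after extracting the factor $e^{s/2}$ (up to the harmless $e^{\pm\delta/2} = 1+O(\delta)$) we are left, essentially, with $e^{s/2}\int t\tanh(\pi t)\,(1/2+it)^{-1} e^{i(s\pm\delta)t}\wh{q}_\delta(t)\,dt$ plus lower-order analogues. This is an oscillatory integral with phase $e^{ist}$, and the decay comes entirely from $\wh{q}_\delta(t)$.

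**Main mechanism.** For the oscillating pieces I would integrate by parts once (or twice) in $t$: writing $e^{ist} = (is)^{-1}\tfrac{d}{dt}e^{ist}$, each integration by parts produces a factor $1/s$ and moves a derivative onto $t\tanh(\pi t)(1/2+it)^{-1}\wh{q}_\delta(t)$. Using Lemma \ref{lemma001}, $\wh{q}_\delta^{(j)}(t)\ll \delta^j(1+|\delta t|^k)^{-1}$ for any $k$, together with the boundedness of $\tanh$ and its derivatives and the decay of $(1/2+it)^{-1}$, one sees that after one integration by parts the integrand is $\ll s^{-1}(1+|\delta t|)^{-k}$ times mild polynomial factors, so the $t$-integral converges and contributes $\ll e^{s/2}/s$, actually $\ll e^{s/2}/s^2$ after a second integration by parts (which accounts for the $e^{s/2}/s^2 = e^{s/2}/(\log X)^2$ in the statement). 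One should be slightly careful near $t=0$ where $\tanh(\pi t)$ vanishes and near $\pm i/2$, but since we are integrating over the real line and $\tanh(\pi t)$ is smooth there, no genuine difficulty arises — indeed the zero of $\tanh(\pi t)$ at the origin only helps.

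**The remaining pieces.** The non-oscillating tail $-2/(1/4+t^2)$ contributes $\int t\tanh(\pi t)\cdot\tfrac{-2}{1/4+t^2}\wh{q}_\delta(t)\,dt$; here there is no $e^{s/2}$ gain but also no oscillation, and the integrand is $O((1+|t|)^{-1})$ near infinity before using $\wh{q}_\delta$. Cutting the $t$-range at $|t|\asymp \delta^{-1}$ (where $\wh{q}_\delta$ starts to decay) gives $\ll \log(\delta^{-1})$, which is exactly the second term in the claimed bound. For the range $|t|\leq 1$ one simply bounds everything trivially: $h_{s\pm\delta}(t)\ll e^{s/2}$ by \eqref{eq005}–\eqref{eq004} (uniformly for bounded $t$), $|\wh{q}_\delta(t)|\leq 1$, $|t\tanh(\pi t)|\ll 1$, so this part is $\ll e^{s/2}$ — but to get the sharper $e^{s/2}/s^2$ one again uses the oscillation of $\cosh((s\pm\delta)(1/2+it))$ and integrates by parts as above; the contribution of $-2/(1/4+t^2)$ on $|t|\leq 1$ is simply $O(1)$. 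Collecting all pieces yields the bound $\ll e^{s/2}/s^2 + \log(\delta^{-1})$.

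**Expected obstacle.** The only mildly delicate point is bookkeeping the derivatives in the integration by parts: one must check that differentiating $t\tanh(\pi t)(1/2+it)^{-1}$ does not destroy the decay needed for convergence against $\wh{q}_\delta$, and that the boundary terms vanish (they do, by the rapid decay of $\wh{q}_\delta$). There is nothing deep here — it is the standard "smooth cutoff kills oscillatory integrals" argument — so I expect this lemma to be essentially routine once the decomposition via \eqref{eq004} and repeated integration by parts is set up correctly.
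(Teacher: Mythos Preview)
Your proposal is correct and follows essentially the same route as the paper: decompose $h_{s\pm\delta}(t)$ via \eqref{eq004} into the oscillatory exponentials and the tail $-2/(1/4+t^2)$, integrate the oscillatory pieces by parts twice against $e^{i(s\pm\delta)t}$ (using Lemma~\ref{lemma001} to control derivatives of $\wh{q}_\delta$ and to see that the boundary terms vanish) to extract $e^{s/2}/s^2$, and bound the tail piece directly to obtain $\log(\delta^{-1})$. The only cosmetic difference is that the paper does not split at $|t|=1$ --- it integrates by parts over the whole real line from the outset, which avoids spurious boundary terms at $t=\pm 1$ and makes the $|t|\le 1$ discussion unnecessary; since you effectively arrive at the same global integration-by-parts in your ``Main mechanism'' paragraph, this is a harmless detour rather than a gap.
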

%%%%%%%%%%%%%%%%%%%%%%%%%%%%%%%%%%%%%%%%
%%%%%%   PROOF OF LEMMA 4      %%%%%%%%%
\begin{proof}
Recall that we have $h_\pm(t)=h_\rho(t)\widehat{q}_\delta(t)$, 
and that by Lemma \ref{lemma001} we have $\widehat{q}_\delta(t)\ll (1+|\delta t|^k)^{-1}$
for every $k\geq 0$.
Using \eqref{eq004} and the definition of the hyperbolic cosine we can write
\begin{equation}\label{eq009a}
\begin{split}
h_{\rho\pm\delta}(t) 
&=
\frac{2}{1+2it}(e^{(\rho\pm\delta)(1/2+it)}+e^{-(\rho\pm\delta)(1/2+it)}) 
\\
&+
\frac{2}{1-2it}(e^{(\rho\pm\delta)(1/2-it)}+e^{-(\rho\pm\delta)(1/2-it)}) 
-
\frac{2}{1/4+t^2}.
\end{split}
\end{equation}
Bounding in absolute value the integrand associated to the
last term in \eqref{eq009a}, we obtain
\begin{equation}\label{eq013}
\int_\RR \Big|\frac{t\,\widehat{q}_\delta(t)\tanh(\pi t)}{1/4+t^2}\Big|dt
\ll
\int_\RR \frac{dt}{(1+|t|)(1+|\delta t|)}
\ll
\log(\delta^{-1})+1.
\end{equation}
Consider now the integrand associated to the first term in \eqref{eq009a}
We integrate by parts twice and obtain
\begin{align*}
I:= 2\int_\RR
&\frac{t\,\widehat{q}_\delta(t) \tanh(\pi t)e^{(\rho\pm\delta)(1/2+it)}}{1+2it}dt 
\\
&=
2\sum_{j=1}^2 (-1)^{j-1}\frac{e^{(\rho\pm\delta)(1/2+it)}}{(i(\rho\pm\delta))^j} 
\frac{d^{j-1}}{dt^{j-1}}\left(\frac{t\,\widehat{q}_\delta(t)\tanh(\pi t)}{(1+2it)}\right)\bigg|_{t=-\infty}^{+\infty}
\\
&+
2\int_\RR \frac{e^{(\rho\pm\delta)(1/2+it)}}{(i(\rho\pm\delta))^2} 
\frac{d^2}{dt^2}\left(\frac{t\,\widehat{q}_\delta(t)\tanh(\pi t)}{(1+2it)}\right)dt.\rule{0pt}{17pt} 
\end{align*}
By Lemma \ref{lemma001} we see that the boundary terms vanish, and we can bound
\[
\frac{d^2}{dt^2}\left(\frac{t\,\widehat{q}_\delta(t)\tanh(\pi t)}{(1+2it)}\right)
\ll
\frac{1}{(1+|\delta t|^k)}\left(\delta^2+\frac{1}{1+|t|^2}\right),
\]
for every $k\geq 0$, with implied constant depending on $k$.
Fixing $k>1$ we obtain
\begin{equation}\label{eq014}
I
\ll
\frac{e^{\rho/2}}{\rho^2}\int_\RR \frac{1}{(1+|\delta t|^k)}\left(\delta^2+\frac{1}{1+|t|^2}\right) dt 
\ll
\frac{e^{\rho/2}}{\rho^2}. 
\end{equation}
The other terms in \eqref{eq009a}
are treated similarly, and are bounded by the same quantity.
Adding \eqref{eq013} and \eqref{eq014} we obtain the desired estimate.
\end{proof}
%%%%%%%     END OF PROOF OF LEMMA 4    %%%%%%%%
%%%%%%%%%%%%%%%%%%%%%%%%%%%%%%%%%%%%%%%%%%%%%%%

%%%%%%%%%%%%%%%%%%%%%%%%%%%%%%%%%%%%%%
%%%%  LEMMA: ELLIPTIC ELEMENTS   %%%%%
\begin{lemma}\label{lemma08}
Let $\rho>1$, $0<\delta<1/4$, and $h_\pm$ as in \eqref{2.14A}. 
Let $m,\ell\in\NN$, with $m\geq 2$ and $1\leq \ell<m$. Then 
\[
\int_\RR h_\pm(t) \frac{\cosh(\pi(1-2\ell/m)t)}{\cosh(\pi t)}dt
\ll
\frac{e^{\rho/2}}{\rho^2}, 
\]
with implied constant that depends on $m$ and $\ell$.
\end{lemma}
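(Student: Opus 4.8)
The plan is to follow the strategy used in the proof of Lemma~\ref{lemma004}, which here turns out slightly easier because the kernel decays. Write $K(t)=\cosh(\pi(1-2\ell/m)t)/\cosh(\pi t)$. Since $1\le\ell\le m-1$ we have $|1-2\ell/m|\le 1-2/m<1$, so using $\cosh x\le e^{|x|}$ together with $\cosh x\ge\tfrac12 e^{|x|}$ one gets $|K(t)|\ll e^{-c|t|}$ with $c=2\pi/m>0$; moreover, being a ratio of entire functions with denominator nonvanishing on $\RR$, $K$ is real-analytic, and differentiating shows $|K^{(j)}(t)|\ll_{j,m,\ell} e^{-c|t|}$ for every $j\ge 0$. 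Recall also $h_\pm(t)=h_{s\pm\delta}(t)\wh q_\delta(t)$, and that by Lemma~\ref{lemma001} we have $|\wh q_\delta^{(j)}(t)|\ll_j\delta^j\ll 1$ for $\delta<1/4$.

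Next I would insert the explicit expression \eqref{eq009a} for $h_{s\pm\delta}(t)$, which splits it into the term $-2/(1/4+t^2)$ together with four terms of the shape $(1\pm 2it)^{-1}e^{\pm(s\pm\delta)(1/2\pm it)}$. For the first term, bounding in absolute value and using the exponential decay of $K$,
\begin{equation*}
\int_\RR\Big|\frac{K(t)\wh q_\delta(t)}{1/4+t^2}\Big|\,dt\ll\int_\RR e^{-c|t|}\,dt\ll 1 .
\end{equation*}
For each of the two terms carrying a decaying exponential $e^{-(s\pm\delta)/2}$, the same absolute-value estimate gives a contribution $\ll e^{-s/2}$. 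Since $s>1$, both $1$ and $e^{-s/2}$ are $\ll e^{s/2}/s^2$ (the function $e^{s/2}/s^2$ is bounded below by a positive constant on $s>0$), so all of these are admissible.

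The main contribution comes from the two terms carrying a growing exponential; consider
\begin{equation*}
I:=2e^{(s\pm\delta)/2}\int_\RR\frac{K(t)\wh q_\delta(t)}{1+2it}\,e^{i(s\pm\delta)t}\,dt ,
\end{equation*}
the remaining one, with $e^{-i(s\pm\delta)t}$ and $(1-2it)^{-1}$, being handled identically. Here I would integrate by parts twice in $t$, writing $e^{i(s\pm\delta)t}=(i(s\pm\delta))^{-1}\tfrac{d}{dt}e^{i(s\pm\delta)t}$; the boundary terms vanish since $K$, $\wh q_\delta$, $(1+2it)^{-1}$ and all their derivatives decay (exponentially, because of $K$). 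By Leibniz the resulting integrand is a sum of products of derivatives of $K$, of $\wh q_\delta$, and of $(1+2it)^{-1}$, and using $|K^{(j)}(t)|\ll e^{-c|t|}$, $|\wh q_\delta^{(j)}(t)|\ll\delta^j\ll 1$ and $|\tfrac{d^j}{dt^j}(1+2it)^{-1}|\ll(1+|t|)^{-1-j}$, the whole second derivative is $\ll e^{-c|t|}$. Hence
\begin{equation*}
I\ll\frac{e^{(s\pm\delta)/2}}{(s\pm\delta)^2}\int_\RR e^{-c|t|}\,dt\ll\frac{e^{s/2}}{s^2} ,
\end{equation*}
where we used $\delta<1/4$ and $s>1$ to replace $s\pm\delta$ by $s$. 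Collecting the five contributions yields the claimed bound, with an implied constant depending on $m$ and $\ell$ through $c=2\pi/m$ and the constants in $|K^{(j)}(t)|\ll_{j,m,\ell} e^{-c|t|}$.

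I do not expect a serious obstacle: the two points requiring care are the elementary inequality $|1-2\ell/m|<1$ that produces the exponential decay of the kernel, and, exactly as in Lemma~\ref{lemma004}, the bookkeeping in the two integrations by parts needed to check that the boundary terms vanish and that the final integral is $\ll e^{s/2}/s^2$ uniformly in $\delta$.
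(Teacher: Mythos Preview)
Your proof is correct and follows essentially the same approach as the paper: decompose $h_{s\pm\delta}$ via \eqref{eq009a}, bound the rational piece trivially, and integrate by parts twice against the oscillatory exponentials. The only cosmetic difference is that you exploit the exponential decay $|K^{(j)}(t)|\ll e^{-c|t|}$ directly to control everything (including the vanishing of boundary terms and the integrability of the second derivative), whereas the paper carries along the more refined bound $\ll (1+|t|)^{-1}(1+|\delta t|^k)^{-1}\big((1+|t|^2)^{-1}+\delta^2\big)$ coming from Lemma~\ref{lemma001}; since the kernel already decays exponentially, your shortcut is cleaner and loses nothing.
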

%%%%%%%%%%%%%%%%%%%%%%
%%  PROOF OF LEMMA  %%
\begin{proof}
Recall that $h_\pm(t)=h_{\rho\pm\delta}(t)\wh{q}_\delta(t)$, and use \eqref{eq009a} 
to express $h_{\rho\pm\delta}(t)$. The integrand associated to the last term in \eqref{eq009a} 
is bounded by
\[
\ll
\int_\RR \frac{\cosh(\pi(1-2\ell/m)t)}{(1+t^2)\cosh(\pi t)}dt
\ll
\int_\RR \frac{dt}{1+t^2} \ll 1.
\]
Consider now the term $\exp((\rho\pm\delta)(1/2+it))$ in \eqref{eq009a}. 
The corresponding integral contributes
\[
\begin{aligned}
J:=
2\int_\RR
&\frac{e^{(\rho\pm\delta)(1/2+it)}\wh{q}_\delta(t)\cosh(\pi(1-2\ell/m)t)}{(1+2it)\cosh(\pi t)}dt 
\\
&=
2\sum_{j=1}^2 (-1)^{j-1}
\frac{e^{(\rho\pm\delta)(1/2+it)}}{(i(\rho\pm\delta))^j} 
\frac{d^{j-1}}{dt^{j-1}}
\left(
\frac{\wh{q}_\delta(t)\cosh(\pi t(1-2\ell/m))}{(1+2it)\cosh(\pi t)}
\right)\bigg|_{t=-\infty}^{+\infty}
\\
&+ 2\int_\RR
\frac{e^{(\rho\pm\delta)(1/2+it)}}{(i(\rho\pm\delta))^2} 
\frac{d^2}{dt^2}
\left(
\frac{\wh{q}_\delta(t)\cosh(\pi t(1-2\ell/m))}{(1+2it)\cosh(\pi t)}
\right)dt.
\end{aligned}
\]
The boundary terms vanish, and by Lemma \ref{lemma001} we can bound
\[
\frac{d^2}{dt^2}
\left(
\frac{\wh{q}_\delta(t)\cosh(\pi t(1-2\ell/m))}{(1+2it)\cosh(\pi t)}
\right)
\ll
\frac{1}{(1+|t|)(1+|\delta t|^k)}\left(\frac{1}{1+|t|^2}+\delta^2\right)
\]
where the implied constant depends on $m$ and $\ell$.
Hence we get
\[
J
\ll
\frac{e^{\rho/2}}{\rho^2}\int_\RR \frac{1}{(1+|t|)(1+|\delta t|^k)}\left(\frac{1}{1+|t|^2}+\delta^2\right) dt 
\ll
\frac{e^{\rho/2}}{\rho^2}. 
\]
The terms associated to the other exponentials in \eqref{eq009a} are treated similarly,
and are bounded by the same quantity.
\end{proof}
%%%%  END OF PROOF LEMMA ELLIPTIC ELEMENTS %%%%
%%%%%%%%%%%%%%%%%%%%%%%%%%%%%%%%%%%%%%%%%%%%%%%

%%%%%%%%%%%%%%%%%%%%%%%%%%%%%%%%%%%%%
%%%%  LEMMA PARABOLIC ELEMENTS   %%%%
\begin{lemma}\label{lemma09}
Let $\rho>1$, $0<\delta<1/4$, and let $g_\pm$ and $h_\pm$ 
as in \eqref{2.13A} and \eqref{2.14A}. Then
\[
g_\pm(0)\log 2
+
\frac{1}{2\pi}\int_{-\infty}^{+\infty} h_\pm(t) \psi(1+it)dt
\ll
\delta e^{\rho/2} + \log(\delta^{-1}). 
\]
\end{lemma}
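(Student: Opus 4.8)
The plan is to bound the two summands on the left separately. The term $g_\pm(0)\log 2$ is immediately negligible: since $g_\pm=g_{s\pm\delta}\ast q_\delta$ with $q_\delta$ supported in $[-\delta,\delta]$ and $g_{s\pm\delta}(y)=2\sinh(|y|/2)\ll|y|$ for $|y|\le\delta<1/4<s\pm\delta$, one has $g_\pm(0)=\int_{-\delta}^{\delta}g_{s\pm\delta}(-y)q_\delta(y)\,dy\ll\delta$, so $g_\pm(0)\log 2\ll\delta$. Everything then reduces to estimating $\int_{\RR}h_\pm(t)\psi(1+it)\,dt$, where $\psi$ is the digamma function; I see two routes, both turning on controlling the mild (logarithmic) growth of $\psi$ on the line $\Re(z)=1$.

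The first route follows the pattern of Lemmas~\ref{lemma004} and~\ref{lemma08}: write $h_\pm(t)=h_{s\pm\delta}(t)\wh{q}_\delta(t)$ and expand $h_{s\pm\delta}(t)$ via~\eqref{eq009a} into $-\frac{2}{1/4+t^2}$ and four exponential terms $\frac{2}{1\pm2it}e^{\pm(s\pm\delta)(1/2\pm it)}$. The two facts about $\psi$ one needs are $\psi(1+it)\ll\log(2+|t|)$ and $\frac{d^{j}}{dt^{j}}\psi(1+it)\ll(1+|t|)^{-j}$ for $j\ge1$, both consequences of the series $\psi'(z)=\sum_{n\ge0}(n+z)^{-2}$ and its term-by-term derivatives. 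The term $-\frac{2}{1/4+t^2}$ gives an integrand $\ll\log(2+|t|)(1+t^2)^{-1}$, integrable, contributing $O(1)$. For an exponential term such as $2e^{(s\pm\delta)/2}\int_{\RR}\frac{\wh{q}_\delta(t)\psi(1+it)}{1+2it}e^{i(s\pm\delta)t}\,dt$ one integrates by parts twice in $t$, exploiting the oscillation $e^{i(s\pm\delta)t}$: by Lemma~\ref{lemma001} the boundary terms vanish (the decay of $\wh{q}_\delta$ beats the logarithmic growth of $\psi$), the two integrations by parts gain the factor $(s\pm\delta)^{-2}$, and Lemma~\ref{lemma001} together with the bounds on $\psi^{(j)}$ makes the $L^1$-norm of the second $t$-derivative of $\wh{q}_\delta(t)\psi(1+it)(1+2it)^{-1}$ be $\ll1$ (fix $k>1$ in Lemma~\ref{lemma001}, use $\delta<1/4$). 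Hence each such term is $\ll e^{s/2}/s^2$, and the remaining three (the two with $e^{-(s\pm\delta)/2}$ being even smaller) are handled identically. Altogether $\int_{\RR}h_\pm(t)\psi(1+it)\,dt\ll e^{s/2}/s^2+1$, which, combined with the bound for $g_\pm(0)\log 2$, is of the shape of Lemmas~\ref{lemma004}\,--\,\ref{lemma08} and gives the lemma.

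The second route avoids $\psi$ entirely and yields an even cleaner estimate. Insert the integral representation $\psi(1+it)=-\gamma+\int_0^{\infty}\frac{1-e^{-itx}}{e^x-1}\,dx$, interchange the order of integration (legitimate since $h_\pm(t)\ll(1+|t|)^{-2-\sigma}$ and $\int_0^\infty\frac{|1-e^{-itx}|}{e^x-1}\,dx\ll\log(2+|t|)$), and use Fourier inversion $\frac{1}{2\pi}\int_{\RR}h_\pm(t)e^{-itx}\,dt=g_\pm(x)$ (valid because $g_\pm$ is even) to obtain $\frac{1}{2\pi}\int_{\RR}h_\pm(t)\psi(1+it)\,dt=-\gamma\,g_\pm(0)+\int_0^{\infty}\frac{g_\pm(0)-g_\pm(x)}{e^x-1}\,dx$. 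Since $g_\pm$ is $O(1)$-Lipschitz on $[0,1]$ and $g_\pm(x)\ll e^{x/2}$ on its (compact) support, splitting the last integral at $x=1$ and using $x/(e^x-1)\le1$ near $0$ and $1/(e^x-1)\ll e^{-x}$ for $x\ge1$ shows it is $O(1)$, which gives the sharper bound $\ll\log(\delta^{-1})$. In both routes the only genuine obstacle is the slow growth of $\psi(1+it)$: in the first it is neutralised by the rapid decay of $\wh{q}_\delta$, in the second by the integral representation, and I would expect the $L^1$ bookkeeping of the second derivative in the first route to be the most tedious step, though entirely routine.
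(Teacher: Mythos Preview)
Your second route is correct and in fact sharper than what the paper obtains: it yields the bound $O(1)$, which is dominated by $\log(\delta^{-1})$ since $\delta<1/4$. The paper proceeds in the same spirit---pass back to the geometric side---but quotes Iwaniec's identity \cite[eq.\ (10.17)]{iwaniec_spectral_2002},
\[
g_\pm(0)\log 2+\frac{1}{2\pi}\int_\RR h_\pm(t)\psi(1+it)\,dt
=\frac{h_\pm(0)}{4}-\gamma g_\pm(0)+\int_0^\infty \log\bigl(\sinh(x/2)\bigr)\,dg_\pm(x),
\]
which carries a main term $h_\pm(0)/4=e^{s/2}+O(\delta e^{s/2}+1)$ that must be cancelled against the leading piece $-\tfrac12\int_\delta^\infty g_\pm(x)\,dx=-e^{s/2}+O(\delta e^{s/2}+1)$ of the Stieltjes integral; this cancellation is the source of the factor $\delta$ in the paper's bound $\delta e^{s/2}+\log(\delta^{-1})$. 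Your use of the Gauss representation $\psi(1+z)=-\gamma+\int_0^\infty(1-e^{-zx})(e^x-1)^{-1}\,dx$ lands directly on $\int_0^\infty\frac{g_\pm(0)-g_\pm(x)}{e^x-1}\,dx$, which is $O(1)$ with no cancellation needed. So the two arguments are close cousins, but yours is cleaner.

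Your first route, however, does not prove the lemma as stated. Two integrations by parts against $e^{i(s\pm\delta)t}$ give the bound $e^{s/2}/s^2+O(1)$, exactly as in Lemmas~\ref{lemma004}--\ref{lemma08}, and that is \emph{not} dominated by $\delta e^{s/2}+\log(\delta^{-1})$ uniformly in $(s,\delta)$: for instance, with $\delta=s^{-3}$ one has $\delta e^{s/2}+\log(\delta^{-1})\asymp e^{s/2}/s^3+\log s\ll e^{s/2}/s^2$. The oscillation in $t$ buys a power of $s$, whereas the lemma claims a power of $\delta$; these are genuinely different savings, and the first route recovers only the former. This is harmless for the application in Section~\ref{section02} (there only the cruder bound $e^{s/2}+\log(\delta^{-1})$ is used), but you should not claim that the first route ``gives the lemma''.
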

%%%%%%%%%%%%%%%%%%%%%%%%%
%%%  PROOF OF LEMMA  %%%%
\begin{proof}
We use here the formula given in \cite[eq. (10.17)]{iwaniec_spectral_2002}
to get back to an integral involving $g_\pm$. We have
\begin{equation}\label{eq017}
\begin{split}
g_\pm(0)\log 2
&+
\frac{1}{2\pi}\int_{-\infty}^{+\infty} h_\pm(t) \psi(1+it)dt
\\
&=
\frac{h_\pm(0)}{4}
-
\gamma g_\pm(0)
+
\int_0^\infty \log\big(\sinh(x/2)\big) dg_\pm(x).
\end{split}
\end{equation}
By \eqref{eq009a} we know that
\begin{equation}\label{eq018}
\frac{h_\pm(0)}{4}=e^{\rho/2}+O(\delta e^{\rho/2}+1), 
\end{equation}
and by definition of $g_\pm(x)$ we have $g_\pm(x)=O(\delta)$ for $|x|\leq \delta$.
The last integral in \eqref{eq017} is analyzed as follows.
For $x\in[0,\delta]$ we bound
\[
\frac{d}{dx} g_\pm(x)
=
\big(g_{\rho\pm\delta}\ast q_\delta'\big)(x) 
\ll
\int_{-\delta}^{\delta} \sinh\Big(\frac{|x-y|}{2}\Big)|q_\delta'(y)|dy
\ll
\frac{\sinh(\delta)}{\delta} \|q'\|_1\ll 1,
\]
so that we have, uniformly in $\delta$,
\[
\int_0^\delta \log(\sinh(x/2)) dg_\pm(x)
\ll
\int_0^\delta |\log(\sinh(x/2))| dx \ll 1.
\]
For $x>\delta$ we integrate by parts obtaining
\[
\begin{split}
\int_\delta^\infty \log(\sinh(x/2))dg_\pm(x)
&=
-g_\pm(\delta)\log(\sinh(\delta/2))
\\
&-
\frac{1}{2}\int_\delta^\infty \frac{g_\pm(x)}{\sinh(x/2)}\cosh(x/2)dx.
\end{split}
\]
Since $g_\pm(\delta)\ll\delta$, the boundary term is bounded by $O(\delta\log(\delta^{-1}))$.
If we write $\cosh(x/2)=\sinh(x/2)+e^{-x/2}$, the integral associated to $e^{-x/2}$ can be bounded by
\[
\int_\delta^\infty \frac{g_\pm(x)e^{-x/2}}{\sinh(x/2)}dx
\ll
\int_\delta^1 \frac{dx}{x} + \int_1^\infty e^{-x/2}dx \ll \log(\delta^{-1})+1.
\]
Finally we have
\[
-\frac{1}{2}\int_\delta^\infty g_\pm(x)dx
=
-2\cosh\Big(\frac{\rho\pm\delta}{2}\Big) + O(1) 
=
-e^{\rho/2} + O(\delta e^{\rho/2}+1). 
\]
The exponential cancels with the first term in \eqref{eq018},
and combining the other estimates we obtain the claim.
\end{proof}
%%%% END OF PROOF PARABOLIC ELEMENTS  %%%%
%%%%%%%%%%%%%%%%%%%%%%%%%%%%%%%%%%%%%%%%%%

We turn now our attention to finding upper bounds for the mean square of the spectral side
in the Selberg trace formula. We start with an estimate for the integral of $h_\pm(t_1)h_\pm(t_2)$.

%%%%%%%%%%%%%%%%%%%%%%%%%%%%%%%%%%%%%%%%%%%%%%%%%%%%%%%%%%
%%%%%   LEMMA : INTEGRAL OF TWO  h_{s\pm\delta}    %%%%%%%
\begin{lemma}\label{lemma005}
Let $0<\delta<1/4$, let $h_{\rho\pm\delta}(t)$ be as in \eqref{2.14A}, 
and let $w_R(\rho)$ be as in \eqref{w_R}. 
Let $R>1$, and $t_1,t_2\in\RR$. Then we have
\begin{equation*}\label{eq012}
\begin{split}
\int_\RR h_{\rho\pm\delta}(t_1)h_{\rho\pm\delta}(t_2) w_R(\rho)d\rho 
\ll
\frac{e^Rv(t_1)v(t_2)}{1+|t_1-t_2|^2}
&+
\frac{e^Rv(t_1)v(t_2)}{1+|t_1+t_2|^2}
\\
&+
e^{R/2}v(t_1^2)v(t_2^2),\rule{0pt}{13pt}
\end{split}
\end{equation*}
where $v(t)=(1+|t|)^{-1}$, and the implied constant does not depend on $\delta$.
\end{lemma}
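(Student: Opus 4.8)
The plan is to expand $h_{s\pm\delta}$ according to \eqref{eq009a}, multiply out the product $h_{s\pm\delta}(t_1)h_{s\pm\delta}(t_2)$, and integrate each of the resulting terms against $w_R$ separately. By \eqref{eq009a} the function $h_{s\pm\delta}(t)$ is a sum of five pieces: four exponential pieces of the shape $\tfrac{2}{1\pm 2it}e^{\pm(s\pm\delta)(1/2\pm it)}$, whose absolute values are $\asymp e^{s/2}v(t)$ when the sign in the exponent is $+$ and $\asymp e^{-s/2}v(t)$ when it is $-$, together with the single $s$-independent piece $-\tfrac{2}{1/4+t^2}$, which is $\asymp v(t)^2\asymp v(t^2)$. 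Multiplying out, $h_{s\pm\delta}(t_1)h_{s\pm\delta}(t_2)$ becomes a sum of $25$ products, each of the form $c\,e^{(s\pm\delta)(a+ib)}$, where $|c|$ is one of $v(t_1)v(t_2)$, $v(t_1)v(t_2^2)$, $v(t_1^2)v(t_2)$, $v(t_1^2)v(t_2^2)$, the number $a$ lies in $\{-1,-\tfrac12,0,\tfrac12,1\}$, and $b$ is one of $0,\pm t_1,\pm t_2,\pm t_1\pm t_2$, the exact combination being dictated by which two of the five pieces one has picked.

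The only analytic ingredient will be the estimate
\[
\Bigl|\int_\RR e^{(s\pm\delta)(a+ib)}w_R(s)\,ds\Bigr|
=
e^{\pm\delta a}\,e^{Ra}\Bigl|\int_\RR e^{u(a+ib)}w(u)\,du\Bigr|
\ll_N
e^{Ra}(1+|b|)^{-N},
\]
valid for every $N\geq 0$, which follows from the substitution $s=R+u$ followed by $N$ integrations by parts, using that $w$ is smooth with compact support (for $|b|\leq 1$ one uses instead the trivial bound). Since $a$ ranges over a fixed finite set and $0<\delta<1/4$, the quantities $e^{ua}$ for $u$ in the support of $w$ and $e^{\pm\delta a}$ are $O(1)$, so the implied constant does not depend on $\delta$, as required.

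It then remains to run through the possibilities for $a$ and check each time that the contribution is absorbed by the right-hand side. If $a=1$ both chosen pieces are of $e^{s/2}$-type, hence $|c|\asymp v(t_1)v(t_2)$ and $b\in\{\pm t_1\pm t_2\}$; taking $N=2$ bounds the term by a constant times $e^{R}v(t_1)v(t_2)(1+|t_1-t_2|^2)^{-1}+e^{R}v(t_1)v(t_2)(1+|t_1+t_2|^2)^{-1}$, i.e. by the first two terms of the lemma. If $a=0$, then either $|c|\asymp v(t_1)v(t_2)$ with $b\in\{\pm t_1\pm t_2\}$, and the same computation together with $e^R\geq 1$ shows the term is dominated by the first two terms, or the term is $\bigl(\tfrac{2}{1/4+t_1^2}\bigr)\bigl(\tfrac{2}{1/4+t_2^2}\bigr)\int_\RR w_R(s)\,ds\ll v(t_1^2)v(t_2^2)\leq e^{R/2}v(t_1^2)v(t_2^2)$. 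If $a=\pm\tfrac12$, exactly one of the two pieces is exponential; assuming it is the one in the variable $t_1$ we have $|c|\asymp v(t_1)v(t_2^2)$ and $b=\pm t_1$, and choosing $N=2$ together with $v(t_1)(1+|t_1|)^{-2}\ll v(t_1^2)$ bounds the term by a constant times $e^{\pm R/2}v(t_1^2)v(t_2^2)\leq e^{R/2}v(t_1^2)v(t_2^2)$; the case where the exponential piece is in $t_2$ is symmetric. Finally if $a=-1$ both pieces are of $e^{-s/2}$-type, so the term is $\ll e^{-R}v(t_1)v(t_2)(1+|b|^2)^{-1}$ with $b\in\{\pm t_1\pm t_2\}$, once more dominated by the first two terms of the lemma. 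Adding up the finitely many contributions yields the stated inequality.

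To the extent there is a difficulty, it is purely bookkeeping: attaching to each of the $25$ cross terms the correct prefactor shape and the correct exponent $a$, and verifying in each case that the resulting bound is dominated by one of the three terms on the right-hand side. Beyond the single integration-by-parts estimate above there is no genuine analytic obstacle, and in particular no cancellation between different cross terms is needed.
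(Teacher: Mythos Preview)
Your proof is correct and follows essentially the same approach as the paper: expand $h_{s\pm\delta}$ via \eqref{eq009a}, multiply out, and bound each cross term by integrating by parts against $w_R$. The paper's version is terser (it treats two representative cases and says the rest are similar), while you package the integration-by-parts estimate uniformly and run through the full case analysis, but the underlying argument is the same.
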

%%%%%%%%%%%%%%%%%%%%%%%%%%%%%%%%%%%%%%%%
%%%%%%   PROOF OF LEMMA 4      %%%%%%%%%
\begin{proof}
In order to express $h_{\rho\pm\delta}$ we consider again equation \eqref{eq009a}. 

Multiplying $h_{\rho\pm\delta}(t_1)$ with $h_{\rho\pm\delta}(t_2)$, the product of 
the first exponential in \eqref{eq009a} (from the factor $h_{\rho\pm\delta}(t_1)$) 
and the last term from the factor $h_{\rho\pm\delta}(t_2)$ contributes 
\begin{align}
\int_\RR \frac{4e^{(\rho\pm\delta)(1/2+it_1)}}{(1+2it_1)(1/4+t_2^2)} w_R(\rho)d\rho 
&=
\int_\RR \frac{2e^{(\rho\pm\delta)(1/2+it_1)}w_R'(\rho)}{(1/2+it_1)^2(1/4+t_2^2)} d\rho 
\nonumber
\\
&\ll
e^{R/2} v(t_1^2)v(t_2^2) \|w'\|_1
\ll
e^{R/2} v(t_1^2)v(t_2^2).\rule{0pt}{14pt}
\label{eq010}
\end{align}
Consider now the product of the first exponential in \eqref{eq009a} for $h_{\rho\pm\delta}(t_1)$ 
and the same term for $h_{\rho\pm\delta}(t_2)$. This contributes 
\begin{align}
\int_\RR \frac{4e^{(\rho\pm\delta)(1+i(t_1+t_2))}}{(1+2it_1)(1+2it_2)} w_R(\rho)d\rho 
&\ll
e^Rv(t_1)v(t_2)\min(1,|t_1+t_2|^{-2}\|w''\|_1)
\nonumber
\\
&\ll
\frac{e^R v(t_1)v(t_2)}{1+|t_1+t_2|^2}.
\label{eq011}
\end{align}
The other terms in the product $h_{\rho\pm\delta}(t_1)h_{\rho\pm\delta}(t_2)$ are bounded similarly by 
\eqref{eq010} and \eqref{eq011}, except that we need to replace $|t_1+t_2|$
by $|t_1-t_2|$ when we integrate the product $\exp((\rho\pm\delta)(\pm 1\pm i(t_1-t_2))$. 
This concludes the proof.
\end{proof}
%%%%%%   END OF PROOF OF LEMMA 4    %%%%%%%%
%%%%%%%%%%%%%%%%%%%%%%%%%%%%%%%%%%%%%%%%%%%%

In order to exploit at best the bound proved in the previous lemma, we 
estimate the size of the spectrum on unit intervals.

%%%%%%%%%%%%%%%%%%%%%%%%%%%%%%%%%%%%%%%%%%%%%%%%%
%%%%  LEMMA: SHORT INTERVAL WEYL INEQUALITY  %%%%
\begin{lemma}\label{lemma006}
Let $\Gamma$ be a cofinite Fuchsian group,
let $\varphi(s)$ be the scattering determinant associated to $\Gamma$,
and let $T>1$. We have
\[
\sharp\{T\leq t_j\leq T+1\} + \int_{T\leq|t|\leq T+1} \Big|\frac{-\varphi'}{\varphi}(1/2+it)\Big|dt \ll T.
\]
\end{lemma}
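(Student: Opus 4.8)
The plan is to bound the two terms on the left of the asserted inequality more or less separately. They are linked through the Selberg trace formula (which expresses the eigenvalue count in terms of a weighted integral of $\varphi'/\varphi$ over the critical line), so I would first prove the bound for the scattering integral and then feed it back into the trace formula to control the eigenvalue count.

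\emph{The scattering integral.} From $\varphi(s)\varphi(1-s)=1$ and $\varphi(\bar s)=\overline{\varphi(s)}$ one gets $|\varphi(1/2+it)|=1$; in particular $\varphi$ has neither zeros nor poles on $\Re s=1/2$, and, writing $\varphi(1/2+it)=e^{i\vartheta(t)}$ with $\vartheta$ real and $C^\infty$, the integrand $-\varphi'/\varphi(1/2+it)=\vartheta'(t)$ is real valued. I would invoke the standard partial-fraction estimate for the logarithmic derivative of a meromorphic function of finite order: since $\varphi$ has finite order, equals an absolutely convergent Dirichlet series bounded away from $0$ in $\Re s>1$, has polynomial growth in vertical strips, and has $\ll\log(|T|+2)$ zeros and poles in any disc of bounded radius around $1/2+iT$, one obtains, for $t\in[T,T+1]$,
\[
\frac{-\varphi'}{\varphi}(1/2+it)=-\sum_{|\rho-(1/2+it)|\le 3}\frac{k_\rho}{1/2+it-\rho}+O(\log T),
\]
the sum over zeros and poles $\rho$ of $\varphi$, with $k_\rho$ the signed multiplicity. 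Split it according to whether $|\Re\rho-1/2|\ge c_0$ or $<c_0$ for a fixed small $c_0>0$. Each term of the first kind satisfies $\int_T^{T+1}|1/2+it-\rho|^{-1}\,dt\ll c_0^{-1}\ll 1$, and there are $\ll\log T$ of them, so they contribute $\ll\log T$. For the second kind, since $\varphi$ has only finitely many zeros with $\Re s<1/2$ and finitely many poles with $\Re s>1/2$ (the residual spectrum), for $T$ large the zeros and poles with $|\Re\rho-1/2|<c_0$ come in mirror pairs $\{\rho,\,1-\bar\rho\}$, a zero at $\rho=1/2+d+i\gamma$ matched with a pole at $1-\bar\rho=1/2-d+i\gamma$ of the same multiplicity $k$ --- this being the symmetry of the divisor of $\varphi$ under $s\mapsto 1-\bar s$. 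Such a pair contributes to $-\varphi'/\varphi(1/2+it)$ exactly
\[
k\Big(\frac{1}{1/2+it-(1-\bar\rho)}-\frac{1}{1/2+it-\rho}\Big)=\frac{2kd}{d^{2}+(t-\gamma)^{2}},
\]
whose integral over any interval is $\le 2\pi k$, \emph{uniformly in the gap $d>0$}. Summing over the $\ll\log T$ pairs gives $\ll\log T$. Hence $\int_T^{T+1}|\varphi'/\varphi(1/2+it)|\,dt\ll\log T\ll T$, and the piece over $[-T-1,-T]$ is identical because $|\varphi'/\varphi(1/2-it)|=|\varphi'/\varphi(1/2+it)|$.

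\emph{The eigenvalue count.} Apply the Selberg trace formula \eqref{selberg-trace-formula} with an admissible even test function $h$ with $h\ge 0$ on $\RR$, $h(t)\gg 1$ for $t\in[T,T+1]\cup[-T-1,-T]$, and super-polynomial decay away from those intervals --- for instance (a rescaling of) $h(t)=e^{-(t-T-\frac12)^{2}}+e^{-(t+T+\frac12)^{2}}$, whose inverse Fourier transform $g$ satisfies $|g(x)|\ll e^{-x^{2}/4}$. Then the geometric side is $\ll_\Gamma 1$ (by the rapid decay of $g$ and $\psi_\Gamma(X)\ll X$), while $IE\ll\int|t|h(t)\,dt\ll T$, $EE\ll 1$, $PE\ll(\log T)\int h+1\ll\log T$, $AL\ll h(0)\ll e^{-T^{2}}$, and the small-eigenvalue part of the discrete spectrum is $\ll e^{-T^{2}}$; moreover $|CS|\le\frac{1}{4\pi}\int h(t)|\varphi'/\varphi(1/2+it)|\,dt\ll\log T$ by the estimate just proved (the argument above works verbatim with the weight $h$). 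Rearranging the trace formula gives $\sum_{t_j}h(t_j)\ll T$, and since $h(t_j)\gg 1$ for $T\le t_j\le T+1$ we conclude $\sharp\{T\le t_j\le T+1\}\ll T$. Adding the two bounds proves the lemma. (The eigenvalue count can also be quoted directly as the local Weyl law, cf.\ \cite[Ch.~10]{iwaniec_spectral_2002}.)

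\emph{Main obstacle.} The substantive point is the partial-fraction estimate for $\varphi'/\varphi$ on the critical line together with the mirror pairing of the near-critical zeros and poles of $\varphi$; both rely on standard but non-trivial analytic properties of the scattering determinant --- its finite order, the $\ll\log$ bound for zeros and poles in bounded discs, polynomial growth in vertical strips, and the functional equation --- for which one leans on \cite{hejhal_selberg_1983} or \cite[Ch.~10]{iwaniec_spectral_2002}; for congruence groups one may instead use the explicit formula for $\varphi$ in terms of Dirichlet $L$-functions.
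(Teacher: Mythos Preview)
Your argument for the scattering integral has a genuine gap. The claimed density estimate --- that $\varphi$ has $\ll\log(|T|+2)$ zeros and poles in a bounded disc around $1/2+iT$ --- and the associated partial-fraction expansion with $O(\log T)$ remainder are \emph{not known} for general cofinite Fuchsian groups. Such bounds hold for congruence groups, where $\varphi$ is explicit in terms of Dirichlet $L$-functions (as you note at the end), but the lemma is stated for arbitrary cofinite~$\Gamma$. For such groups $\varphi$ is only known to have order~$\leq 2$, and the distribution of its resonances (poles in $\Re s<1/2$) is precisely one of the things one cannot control a priori; indeed, separating the discrete and continuous contributions in Weyl's law is related to deep open questions (Phillips--Sarnak). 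So the step ``one obtains $\dots+O(\log T)$'' is unjustified, and with it the bound $\int_T^{T+1}|\varphi'/\varphi|\,dt\ll\log T$.

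The paper's proof avoids this entirely and is much shorter. It differences the strong Weyl law between $T$ and $T+1$, getting
\[
\sharp\{T\le t_j\le T+1\}+\frac{1}{4\pi}\int_{T\le|t|\le T+1}\Big(\frac{-\varphi'}{\varphi}\Big)(1/2+it)\,dt=O(T),
\]
and then uses a single crucial input: the Maass--Selberg relations give a \emph{lower} bound $-\varphi'/\varphi(1/2+it)\ge k_\Gamma$ for some constant depending only on $\Gamma$. Since the eigenvalue count is non-negative and the integrand is bounded below, each of the two terms on the left is individually $\ll T$, and the lower bound on the integrand upgrades $\int f$ to $\int|f|$. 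No information about the zeros or poles of $\varphi$ is needed. Your trace-formula argument for the local eigenvalue count is fine in isolation, but it is also unnecessary once one has the differenced Weyl law and the Maass--Selberg lower bound.
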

%%%%%%%%%%%%%%%%%%%%%%%%%%%%
%%%  PROOF OF THE LEMMA  %%%
\begin{proof}
Recall Weyl's law in its strong form (see \cite[Th. 7.3]{venkov_spectral_1990})
\begin{equation*}
\sharp\{t_j\leq T\} + \frac{1}{4\pi}\int_{-T}^T \frac{-\varphi'}{\varphi}(1/2+it)dt
=
\frac{\vol(\GmodH)}{4\pi}T^2 + \frac{\mathfrak{C}}{\pi} T \log T + O(T),
\end{equation*}
where we recall that $\mathfrak{C}$ is the number of inequivalent cusps of $\Gamma$.
Consider the above equation at the point $T+1$, and subtract from it the
same quantity for $T$. In order to shorten notation we write $f(t)=-\varphi'/\varphi(1/2+it)$.
We get
\begin{equation}\label{eq:weyl-short}
\sharp\{T\leq t_j\leq T+1\}
+
\frac{1}{4\pi}\int_{-T-1}^{-T}f(t)dt
+
\frac{1}{4\pi}\int_T^{T+1} f(t)dt
=
O(T).
\end{equation}
The function $f(t)$ is bounded from below by a constant $k$ that depends on the group
(this follows from the Maass-Selberg relations, see \cite[eq. (10.9)]{iwaniec_spectral_2002}).
Hence we have
\[\int_{-T-1}^{-T} f(t)dt \geq k.\]
Since the number $\sharp\{T\leq t_j\leq T+1\}$ is non-negative,
we can write
\begin{align*}
\int_T^{T+1} f(t)dt
&\leq
\int_T^{T+1} f(t)dt + \int_{-T-1}^{-T} f(t)dt - k + 4\pi\cdot\sharp\{T\leq t_j\leq T+1\}
\\
&=O(T)-k = O(T).
\end{align*}
Again from the fact that $f(t)$ is bounded from below by a constant, we infer that in fact we have
\[\int_T^{T+1} |f(t)|dt \ll T.\]
Similarly we find
\[\int_{-T-1}^{-T} |f(t)|dt \ll T,\]
and from \eqref{eq:weyl-short} we conclude that we also have $\sharp\{T\leq t_j\leq T+1\}\ll T$.
This proves the lemma.
\end{proof}
%%%% END OF PROOF LEMMA SHORT INTERVAL WEYL INEQUALITY  %%%%
%%%%%%%%%%%%%%%%%%%%%%%%%%%%%%%%%%%%%%%%%%%%%%%%%%%%%%%%%%%%

At this point we can prove bounds on the mean square of the discrete and continuous spectrum
in the Selberg trace formula. We discuss first the discrete spectrum.

%%%%%%%%%%%%%%%%%%%%%%%%%%%%%%%%%%%%%%%%%%%%%%%%%%%%%%%%%%%
%%%%%  PROPOSITION: MEAN SQUARE OF DISCRETE SPECTRUM  %%%%%
\begin{prop}\label{prop01}
Let $0<\delta<1/4$, let $h_\pm$ as in \eqref{2.14A}, and let $R>1$. We have
\begin{equation*}
\int_\RR \Big|\sum_{t_j>0}h_\pm(t_j)\Big|^2 \, w_R(\rho) d\rho 
\ll
\frac{e^R}{\delta} + e^{R/2}\log^2(\delta^{-1}).
\end{equation*}
\end{prop}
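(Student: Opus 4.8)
The plan is to expand the square, exchange sum and integral, and reduce everything to the bilinear estimate of Lemma~\ref{lemma005} together with the spectral counting bound of Lemma~\ref{lemma006}. Writing $h_\pm(t)=h_{s\pm\delta}(t)\wh q_\delta(t)$ and recalling from Lemma~\ref{lemma001} that $|\wh q_\delta(t)|\ll (1+|\delta t|^k)^{-1}$ for every $k\geq 0$, we have
\[
\int_\RR \Big|\sum_{t_j>0}h_\pm(t_j)\Big|^2 w_R(s)\,ds
=
\sum_{t_j,t_k>0} \wh q_\delta(t_j)\wh q_\delta(t_k)\int_\RR h_{s\pm\delta}(t_j)h_{s\pm\delta}(t_k)w_R(s)\,ds,
\]
so by Lemma~\ref{lemma005} the whole quantity is bounded by
\[
\sum_{t_j,t_k>0}
\frac{|\wh q_\delta(t_j)\wh q_\delta(t_k)|}{v(t_j)^{-1}v(t_k)^{-1}}
\Bigl(
\frac{e^R}{1+|t_j-t_k|^2}
+
\frac{e^R}{1+|t_j+t_k|^2}
+
e^{R/2}v(t_j)v(t_k)
\Bigr),
\]
where $v(t)=(1+|t|)^{-1}$; here I have already absorbed the extra $v(t_j^2)v(t_k^2)\leq v(t_j)v(t_k)$ that appears in the last term. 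So it suffices to bound the three resulting sums separately.

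First I would dispose of the easy terms. For the $|t_j+t_k|$ term, since $t_j,t_k>0$ we have $|t_j+t_k|\geq \max(t_j,t_k)$, so $(1+|t_j+t_k|^2)^{-1}v(t_j)v(t_k)\ll v(t_j)^2v(t_k)^2$, hence that sum is
\[
\ll e^R\sum_{t_j>0}v(t_j)^2|\wh q_\delta(t_j)|\cdot\sum_{t_k>0}v(t_k)^2|\wh q_\delta(t_k)|
\ll e^R\Bigl(\sum_{t_j>0}\frac{|\wh q_\delta(t_j)|}{(1+t_j)^2}\Bigr)^2.
\]
Breaking into dyadic (or unit-length) blocks and using Lemma~\ref{lemma006}, which gives $\sharp\{T\le t_j\le T+1\}\ll T$, the inner sum over a block $[T,T+1]$ contributes $\ll T\cdot (1+T)^{-2}\cdot\min(1,(\delta T)^{-k})$, and summing over $T\in\NN$ this is $\ll \log(\delta^{-1})$ (the blocks with $T\lesssim \delta^{-1}$ give the logarithm; the tail converges by taking $k$ large). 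So this term contributes $\ll e^R\log^2(\delta^{-1})$. The same block decomposition handles the diagonal-type term $e^{R/2}\sum v(t_j)^2|\wh q_\delta(t_j)|\cdot\sum v(t_k)^2|\wh q_\delta(t_k)|\ll e^{R/2}\log^2(\delta^{-1})$, which is exactly the second term in the claimed bound.

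The main obstacle is the off-diagonal term $e^R\sum_{t_j,t_k} \frac{v(t_j)v(t_k)|\wh q_\delta(t_j)\wh q_\delta(t_k)|}{1+|t_j-t_k|^2}$, because here the pair $(t_j,t_k)$ can be close even when both are large. The key point is that the $\wh q_\delta$ factors force both $t_j$ and $t_k$ to be effectively $\ll \delta^{-1}$ (taking $k$ large in Lemma~\ref{lemma001} makes the contribution of $t_j\gg \delta^{-1}$ negligible), and on that range $v(t_j)v(t_k)\leq 1$. So I would bound this sum by $\ll e^R\sum_{0<t_j,t_k\ll \delta^{-1+\eps}}(1+|t_j-t_k|^2)^{-1}$ plus a negligible tail. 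Decomposing the square $[0,\delta^{-1}]^2$ into unit cells and invoking Lemma~\ref{lemma006} to count $\ll T\ll \delta^{-1}$ points of the spectrum in each horizontal strip $t_j\in[m,m+1]$, I would first sum over $t_k$ for fixed $t_j$: $\sum_{t_k}(1+|t_j-t_k|^2)^{-1}\ll \sum_{n\ge 0}(1+n)\cdot(1+n^2)^{-1}\ll \log(\delta^{-1})$ — wait, that overcounts; instead one sums $(1+|t_j-t_k|^2)^{-1}$ over $t_k$ in blocks at distance $\sim n$ from $t_j$, each block contributing $\ll \delta^{-1}\cdot n^{-2}$ points-weight, giving $\ll \delta^{-1}$ total. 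Then summing the resulting $\ll \delta^{-1}$ over the $\ll\delta^{-1}$ values of $t_j$ in $[0,\delta^{-1}]$, weighted again by Lemma~\ref{lemma006}, needs care: the cleanest route is to bound $\sum_{t_j,t_k}(1+|t_j-t_k|^2)^{-1}\ll \sum_{t_j}\bigl(\sharp\{t_k: |t_j-t_k|\le 1\}+\sum_{n\ge1}n^{-2}\sharp\{t_k:|t_j-t_k|\sim n\}\bigr)$, use $\sharp\{t_k\in I\}\ll \delta^{-1}$ for any unit interval $I\subseteq[0,\delta^{-1}]$, obtaining $\ll \delta^{-1}\sum_{t_j\ll\delta^{-1}}1\ll \delta^{-1}\cdot\delta^{-1}$ — which is too weak by a factor $\delta^{-1}$. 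The fix is that one must \emph{not} replace $v(t_j)v(t_k)$ by $1$ crudely for the outer variable: keeping $v(t_j)=(1+t_j)^{-1}$ and summing $\sum_{t_j\ll\delta^{-1}} v(t_j)\cdot(\text{weight }\ll \delta^{-1}$ from the $t_k$-sum, but that $t_k$-weight is really $\ll\min(\delta^{-1},(1+t_j))$-type after respecting $v(t_k)$), ultimately yields $e^R\cdot\delta^{-1}$ after a more careful dyadic accounting in which one of the two logarithmic-looking sums is actually $\sum_{T\ll\delta^{-1}} T\cdot T^{-1}\ll\log(\delta^{-1})$ and the other is the genuine $\sum_{T\ll\delta^{-1}}1\cdot(\text{local density})\ll\delta^{-1}$. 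I expect the bookkeeping in this off-diagonal estimate — correctly pairing the $v$-factors, the Kloosterman-free spectral density $\ll T$, and the $\wh q_\delta$-cutoff at scale $\delta^{-1}$ to land on $e^R/\delta$ rather than $e^R/\delta^2$ or $e^R\log^2(\delta^{-1})$ — to be the only real work; everything else is the routine dyadic summation sketched above. Combining the three contributions gives $\ll e^R/\delta + e^{R/2}\log^2(\delta^{-1})$, as asserted. $\qed$
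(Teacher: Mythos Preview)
Your approach is the paper's approach: expand the square, apply Lemma~\ref{lemma005}, then control the resulting double spectral sums via the local Weyl bound of Lemma~\ref{lemma006}. The easy terms are fine (and in fact the $|t_j+t_k|$ term can be dispatched in one line by observing $t_j+t_k\geq |t_j-t_k|$, so it is dominated by the $|t_j-t_k|$ term). The only real content, as you say, is the sum
\[
\Sigma \;=\; e^R\sum_{t_j,t_\ell>0}\frac{|\wh q_\delta(t_j)\wh q_\delta(t_\ell)|}{(1+t_j)(1+t_\ell)\,(1+|t_j-t_\ell|^2)},
\]
and here your sketch does not quite close. The attempted route via the crude cell bound $\sharp\{t_k\in I\}\ll\delta^{-1}$ indeed loses a factor $\delta^{-1}$, and the subsequent ``more careful dyadic accounting'' is left vague (and slightly garbled: neither of the two sums is logarithmic).

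The clean bookkeeping, which is exactly what the paper does, is to \emph{keep both factors $v(t_j)v(t_\ell)$ throughout} and to use the localized bound $\sharp\{t_\ell\in[T,T+1]\}\ll T$ rather than the uniform $\ll\delta^{-1}$. Restrict by symmetry to $t_\ell\geq t_j$ and split into a near-diagonal strip $t_\ell\in[t_j,t_j+1]$ and the rest. On the strip, the inner sum has $\ll t_j$ terms each of size $\ll 1/t_j$, hence is $O(1)$; the outer sum $\sum_{t_j\leq\delta^{-1}}(1+t_j)^{-1}$ then gives, after unit-interval decomposition, $\sum_{m\leq\delta^{-1}} m\cdot m^{-1}\ll\delta^{-1}$ (and the tail $t_j>\delta^{-1}$ is handled by the decay of $\wh q_\delta$). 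Off the strip, for fixed $t_j$ write $t_\ell\in[t_j+k,t_j+k+1]$ with $k\geq 1$ and use $\sharp\{\cdot\}\ll t_j+k$ together with the $k^{-2}$ decay; the inner sum is again $O(1)$, and one concludes $\Sigma\ll e^R/\delta$ as required. This is precisely the Cram\'er-type unit-interval argument the paper carries out; once you insert it, your proof is complete and identical to the paper's.
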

%%%%%%%%%%%%%%%%%%%%%%%%%%%%%%%%%%%%%
%%%%%   PROOF OF PROPOSITION   %%%%%%
\begin{proof}
Recall that $h_\pm(t)=h_{\rho\pm\delta(t)}\wh{q}_\delta(t)$, 
and that $\wh{q}_\delta(t)\ll(1+|\delta t|^k)^{-1}$ for every $k\geq 0$.
Using Lemma \ref{lemma006} this implies that the series
\[\sum_{t_j>0} h_\pm(t_j)\]
is absolutely convergent, and we can write
\[
\int_\RR \Big|\sum_{t_j>0} h_\pm(t_j)\Big|^2 w_R(\rho)d\rho 
=
\sum_{t_j>0}\sum_{t_\ell>0} \wh{q}_\delta(t_j)\wh{q}_\delta(t_\ell)
\int_\RR h_{\rho\pm\delta}(t_j)h_{\rho\pm\delta}(t_\ell)w_R(\rho)d\rho. 
\]
By Lemma \ref{lemma005} we can estimate the integral and bound the double sum by
\begin{equation}\label{eq015}
\ll
e^R \sum_{t_j,t_\ell>0} \frac{|\wh{q}_\delta(t_j)\wh{q}_\delta(t_\ell)|v(t_1)v(t_2)}{1+|t_j-t_\ell|^2}
+
e^{R/2}\sum_{t_j,t_\ell>0} |\wh{q}_\delta(t_j)\wh{q}_\delta(t_\ell)|v(t_1^2)v(t_2^2),
\end{equation}
where $v(t)=(1+|t|)^{-1}$.
Using Lemma \ref{lemma001} to bound $\wh{q}_\delta(t)\ll(1+|\delta t|^k)^{-1}$ for every $k\geq 0$,
we can estimate the second sum in \eqref{eq015} by
\begin{equation*}\label{eq016}
\ll e^{R/2}
\Big(
\sum_{t_j\leq \delta^{-1}} \frac{1}{t_j^2}
+
\sum_{t_j>\delta^{-1}} \frac{1}{\delta t_j^3}
\Big)^2
\ll
e^{R/2} (\log^2(\delta^{-1})+1).
\end{equation*}
Consider now the first sum in \eqref{eq015}.
By symmetry and positivity, we can consider only
the sum over $t_\ell\geq t_j$.
Moreover we split the sum in order to optimize the
bounds available.
Consider a unit neighbourhood of the diagonal $t_\ell=t_j$.
Using Lemma \ref{lemma006} we can estimate
\begin{align*}
e^R \sum_{t_j>0}\sum_{t_j\leq t_\ell\leq t_j+1} \frac{|\wh{q}_\delta(t_j)\wh{q}_\delta(t_\ell)|}{(1+|t_j|)(1+|t_\ell|)}
&\ll
e^R \sum_{t_j\leq\delta^{-1}} \frac{1}{t_j}\sum_{t_j\leq t_\ell\leq t_j+1} \frac{1}{t_\ell}
\\
&+
\frac{e^R}{\delta^2} \sum_{t_j>\delta^{-1}} \frac{1}{t_j^2}\sum_{t_j\leq t_\ell\leq t_j+1}\frac{1}{t_\ell^2}
\ll
\frac{e^R}{\delta}.
\end{align*}
The tail of the double sum, that is, the range $t_j>\delta^{-1}$ and $t_\ell>t_j+1$,
can be analyzed (we follow here the same method as in Cram\'er \cite{cramer_mittelwertsatz_1922}) by using
a unit interval decomposition for the sum over $t_\ell$, together with Lemma \ref{lemma006}, to get
\begin{align*}
\frac{e^R}{\delta^2}\!\!\sum_{t_j>\delta^{-1}}\frac{1}{t_j^2}\sum_{t_\ell>t_j+1}\frac{1}{t_\ell^2|t_\ell-t_j|^2}
&\ll
\frac{e^R}{\delta^2}\!\!\sum_{t_j>\delta^{-1}}\frac{1}{t_j^2}\sum_{k=1}^\infty\frac{1}{(t_j+k)k^2}
\ll
\frac{e^R}{\delta^2}\!\!\sum_{t_j>\delta^{-1}}\frac{1}{t_j^3}
\ll
\frac{e^R}{\delta}.
\end{align*}
Finally, the range $t_j\leq \delta^{-1}$ and $t_\ell>t_j+1$ is bounded by
\[
\ll
e^R\sum_{t_j\leq \delta^{-1}} \frac{1}{t_j} \sum_{t_\ell>t_j+1} \frac{1}{t_\ell|t_\ell-t_j|^2}
\ll
e^R\sum_{t_j\leq \delta^{-1}} \frac{1}{t_j} \sum_{k=1}^\infty \frac{1}{k^2}
\ll
\frac{e^R}{\delta}.
\]
We conclude that we have
\[
\int_\RR \Big|\sum_{t_j>0}h_\pm(t_j)\Big|^2 \, w_R(\rho) d\rho 
\ll
\frac{e^R}{\delta} + e^{R/2}\log^2(\delta^{-1}),
\]
as claimed.
\end{proof}
%%%%%   END OF PROOF OF PROPOSITION   %%%%%%
%%%%%%%%%%%%%%%%%%%%%%%%%%%%%%%%%%%%%%%%%%%%

The analysis of the continuous spectrum is similar, and we obtain the same bounds.
With the proposition below we conclude the list of auxiliary results needed to prove Theorem~\ref{intro:thm01}.

%%%%%%%%%%%%%%%%%%%%%%%%%%%%%%%%%%%%%%%%%%%%%%%%%%%%%
%%%%  LEMMA: MEAN SQUARE OF CONTINUOUS SPECTRUM  %%%%
\begin{prop}\label{prop02}
Let $0<\delta<1/4$ and $h_\pm$ be as in \eqref{2.14A}.
Let $R\geq 1$. Then
\begin{equation}\label{eq:030}
\int_\RR \Big|\int_\RR h_\pm(t)\frac{-\varphi'}{\varphi}\big(1/2+it\big)\Big|^2 w_R(\rho) d\rho 
\ll
\frac{e^R}{\delta} + e^{R/2}\log^2(\delta^{-1}).
\end{equation}
\end{prop}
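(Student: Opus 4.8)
The plan is to mirror the proof of Proposition~\ref{prop01}, replacing the discrete sum over spectral parameters with an integral over the continuous parameter, and the counting estimate for $\sharp\{T\le t_j\le T+1\}$ with the integral estimate for $\int_{T\le|t|\le T+1}|\varphi'/\varphi(1/2+it)|\,dt$ — both of which are supplied, uniformly in $T$, by Lemma~\ref{lemma006}. Concretely, write $f(t)=-\varphi'/\varphi(1/2+it)$ and open the square:
\[
\int_\RR \Big|\int_\RR h_\pm(t) f(t)\,dt\Big|^2 w_R(s)\,ds
=
\int_\RR\!\!\int_\RR \wh{q}_\delta(t_1)\wh{q}_\delta(t_2) f(t_1) f(t_2)
\Big(\int_\RR h_{s\pm\delta}(t_1) h_{s\pm\delta}(t_2) w_R(s)\,ds\Big) dt_1\,dt_2,
\]
where absolute convergence (needed to justify Fubini) follows because $\wh{q}_\delta(t)\ll(1+|\delta t|^k)^{-1}$ for every $k$ by Lemma~\ref{lemma001}, $f$ grows at most polynomially (again by Lemma~\ref{lemma006}, integrated dyadically), and $h_{s\pm\delta}(t)\ll v(t)e^{s/2}$ on the support of $w_R$ by \eqref{eq009a}. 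Then insert the bound from Lemma~\ref{lemma005} for the inner $s$-integral.

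This reduces \eqref{eq:030} to estimating
\[
e^R\!\!\int\!\!\int \frac{|f(t_1) f(t_2)|\,v(t_1) v(t_2)}{1+|t_1-t_2|^2}\,dt_1\,dt_2
+
e^R\!\!\int\!\!\int \frac{|f(t_1) f(t_2)|\,v(t_1) v(t_2)}{1+|t_1+t_2|^2}\,dt_1\,dt_2
+
e^{R/2}\Big(\int |f(t)|\,v(t^2)\,dt\Big)^2,
\]
where I have already absorbed the harmless factors $|\wh{q}_\delta|\le 1$ except where they provide decay. The third term is the easiest: splitting the range of integration at $|t|=\delta^{-1}$ and using $\wh{q}_\delta(t)\ll(1+|\delta t|^k)^{-1}$ together with $\int_{T}^{T+1}|f(t)|\,dt\ll T$ from Lemma~\ref{lemma006}, one gets $\int |f(t)| v(t^2)|\wh{q}_\delta(t)|\,dt\ll\log(\delta^{-1})$, whence this term is $\ll e^{R/2}\log^2(\delta^{-1})$. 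For the first two terms I would first bound $|f(t_1) f(t_2)|\le\tfrac12(f(t_1)^2+f(t_2)^2)$ and use symmetry to reduce to a single square $\int\!\int f(t_1)^2 v(t_1) v(t_2)(1+|t_1-t_2|^2)^{-1}\,dt_1\,dt_2$; performing the $t_2$-integral first (a convergent convolution giving $O(v(t_1)\log(2+|t_1|))$ at worst) leaves $\int f(t_1)^2 v(t_1)^{2-\eps}\,dt_1$, which by a dyadic decomposition and Lemma~\ref{lemma006} — now using the square, $\int_T^{T+1} f(t)^2\,dt\ll(\int_T^{T+1}f(t)\,dt)\cdot\sup\ll T\cdot T$ is too weak, so instead I keep one factor of $v$ paired with $f$ and reinstate one factor $\wh{q}_\delta(t_1)$ from the original expression — converges on $|t_1|\le\delta^{-1}$ like $\sum_{T\le\delta^{-1}} T^{-1}\ll\log(\delta^{-1})$ and on the tail like $\sum_{T>\delta^{-1}}(\delta T)^{-2}T^{-1}\cdot T\ll\delta^{-1}$. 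Combining, the first two terms contribute $\ll e^R\delta^{-1}$, exactly as needed. The $1+|t_1+t_2|^2$ term is handled identically since the denominator only helps.

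The main obstacle, and the place requiring care, is the bookkeeping of where the decaying factors $\wh{q}_\delta(t_i)$ and the polynomial-type bounds on $f(t_i)=-\varphi'/\varphi(1/2+it_i)$ are spent: one cannot afford to throw away $\wh{q}_\delta$ entirely (the tails of the $t$-integrals against $|f|^2$ would then diverge or be too large), yet one also wants to extract $\log(\delta^{-1})$ rather than $\delta^{-1}$ from the near-diagonal and low-frequency ranges. The correct split — mimicking exactly the four ranges in the proof of Proposition~\ref{prop01}: near-diagonal $|t_1-t_2|\le 1$ with $|t_i|\le\delta^{-1}$, near-diagonal with $|t_i|>\delta^{-1}$, off-diagonal tail with $|t_1|>\delta^{-1}$, and off-diagonal with $|t_1|\le\delta^{-1}$ — is what produces the stated bound $e^R\delta^{-1}+e^{R/2}\log^2(\delta^{-1})$, and I expect the proof to simply state that each range is estimated "as in the proof of Proposition~\ref{prop01}" after replacing unit sums of $1$ or $1/t_j$ by the corresponding integrals $\int_T^{T+1}|f|$ controlled by Lemma~\ref{lemma006}.
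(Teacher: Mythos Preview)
Your approach is correct and matches the paper's proof: open the square, apply Lemma~\ref{lemma005}, and estimate the resulting double integrals via the four-range split (near-diagonal vs.\ off-diagonal, crossed with $|t_1|\le\delta^{-1}$ vs.\ $|t_1|>\delta^{-1}$) using Lemma~\ref{lemma006} on unit intervals, exactly as in Proposition~\ref{prop01}. The detour through $|f(t_1)f(t_2)|\le\tfrac12(f(t_1)^2+f(t_2)^2)$ is unnecessary and, as you yourself note, problematic since Lemma~\ref{lemma006} controls $\int_T^{T+1}|f|$ but not $\int_T^{T+1} f^2$; the paper skips this entirely and works directly with $|f(t_1)||f(t_2)|$ and the four ranges you land on at the end.
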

%%%%%%%%%%%%%%%%%%%%%%%%%%%%%
%%%%  PROOF PROPOSITION  %%%%
\begin{proof}
Recall that we have $h_\pm(t)=h_{\rho\pm\delta}(t)\wh{q}_\delta(t)$, and that 
$\wh{q}_\delta(t)\ll (1+|\delta t|^k)^{-1}$ for every $k\geq 0$.
For simplicity we write
\[f(t)=\frac{-\varphi'}{\varphi}\big(1/2+it\big).\]
Let $J$ denote the integral in \eqref{eq:030}.
Due to the decay properties of $\wh{q}_\delta$ and to Lemma~\ref{lemma006},
$J$ is absolutely convergent, and so we can write
\[
J=
\int_\RR\int_\RR f(t_1)\overline{f(t_2)}\wh{q}_\delta(t_1)\wh{q}_\delta(t_2)
\int_\RR h_{\rho\pm\delta}(t_1)h_{\rho\pm\delta}(t_2) w_R(\rho) d\rho\, dt_1 dt_2. 
\]
The innermost integral is bounded using Lemma \ref{lemma005}. This gives
\begin{equation}\label{eq:031}
\begin{aligned}
J
&\ll
e^R \int_\RR \int_\RR \frac{|f(t_1)f(t_2)\wh{q}_\delta(t_2)\wh{q}_\delta(t_2)|}{(1+|t_1|)(1+|t_2|)(1+|t_1-t_2|^2)}dt_1dt_2
\\
&+
e^{R/2} \int_\RR \int_\RR \frac{|f(t_1)f(t_2)\wh{q}_\delta(t_2)\wh{q}_\delta(t_2)|}{(1+t_1^2)(1+t_2^2)}dt_1dt_2.
\end{aligned}
\end{equation}
The second integral in \eqref{eq:031} is bounded by
\[
\begin{aligned}
\ll
\Big(\int_0^{\delta^{-1}} \frac{|f(t)|}{1+t^2}dt + \int_{\delta^{-1}}^\infty \frac{|f(t)|}{\delta t^3}dt\Big)^2
\ll
\big(\log(\delta^{-1})+1\big)^2
\ll
\log^2(\delta^{-1}),
\end{aligned}
\]
where in the first inequality we have used
a unit interval decomposition of the domain of integration,
and Lemma \ref{lemma006} to bound the integral of $|f(t)|$ in unit intervals.
Consider now the first integral in \eqref{eq:031}.
By symmetry and positivity we can consider only the integral over $t_2\geq t_1\geq 0$.
A unit neighbourhood of the diagonal $t_1=t_2$ gives
\[
\begin{aligned}
\int_0^{\delta^{-1}} \frac{|f(t_1)|}{1+t_1} \int_{t_1}^{t_1+1} \frac{|f(t_2)|}{1+t_2}dt_2dt_1
&+
\int_{\delta^{-1}}^\infty \frac{|f(t_1)|}{\delta^2 t_1^3} \int_{t_1}^{t_1+1} \frac{|f(t_2)|}{\delta^2 t_2^3}dt_2dt_1
\\
&\ll
\int_0^{\delta^{-1}} \frac{|f(t_1)|}{1+t_1}dt_1 + \int_{\delta^{-1}}^\infty \frac{|f(t_1)|}{\delta^4 t_1^4}dt_1
\ll
\frac{1}{\delta}.
\end{aligned}
\]
The tail of the double integral, that is, the range $t_1\geq\delta^{-1}$ and $t_2\geq t_1+1$,
can be bounded as follows:
\[
\int_{\delta^{-1}}^\infty \frac{|f(t_1)|}{\delta^2 t_1^3}
\int_{t_1+1}^\infty \frac{|f(t_2)|}{\delta^2 t_2^3 |t_2-t_1|^2} dt_2 dt_1
\ll
\int_{\delta^{-1}}^\infty \frac{|f(t_1)|}{\delta^4 t_1^5}dt_1
\ll
1.
\]
The range $t_1\leq \delta^{-1}$ and $t_2\geq t_1+1$ contributes
\[
\int_0^{\delta^{-1}} \frac{|f(t_1)|}{1+t_1}\int_{t_1+1}^\infty
\frac{|f(t_2)|}{(1+t_2)(1+\delta^2 t_2^2)|t_2-t_1|^2}dt_2 dt_1
\ll
\int_0^{\delta^{-1}} \frac{|f(t_1)|}{1+t_1} dt_1
\ll
\delta^{-1}.
\]
Summarizing, we have showed that we have the bound
\[J \ll \frac{e^R}{\delta} + e^{R/2}\log^2(\delta^{-1}),\]
which is what we wanted. This proves the proposition.
\end{proof}
%%%%% END OF PROOF PROPOSITION CONTINUOUS SPECRTUM  %%%%
%%%%%%%%%%%%%%%%%%%%%%%%%%%%%%%%%%%%%%%%%%%%%%%%%%%%%%%%

%%%%%%%%%%%%%%%%%%%%%%%%%%%%%%%%%%%%%%%%%%%%%%%%%%%%%%%

\section{Modular group}

This section is devoted to the proof of Theorem~\ref{intro:thm02}, concerning the case $G = \text{PSL}_2(\mathbb{Z})$.
Our approach starts with a lemma of Iwaniec \cite[Lemma 1]{iwaniec_prime_1984}, which gives

\begin{equation*}
 \psi_{G}(X) = X + 2\Re\left(\sum_{t_j \leq T} \frac{X^{1/2 + i t_j}}{1/2 + i t_j}\right) + O\left( \frac{X}{T} \log^2 X\right),
\end{equation*}
where $1 \leq T \leq X^{1/2}(\log X)^{-2}$ (and it is understood that the sum runs over $t_j>0$).
Note that in this case the only small eigenvalue of~$\Delta$
is $\lambda=0$, so that $M_{G}(X) = X$ and $P_{G}(X) = \psi_{G}(X)-X$.  From the equation above we deduce that

\begin{equation}
\label{avgerror}
\frac{1}{A} \int_A^{2A} \left|P_{G}(X) \right|^2 dX
\ll
\frac{1}{A} \int_A^{2A} \left| \sum_{t_j \leq T} \frac{X^{1/2 + i t_j}}{1/2 + i t_j} \right|^2 dX
+
\frac{A^2}{T^2} \log^2 A.
\end{equation}

We now describe the outline of the proof of Theorem \ref{intro:thm02}.
The main idea is to use the Kuznetsov trace formula to estimate the mean square of
\begin{equation*}
 \sum_{t_j \leq T} \frac{X^{1/2 + i t_j}}{1/2 + i t_j}
\end{equation*} or, equivalently by partial summation, the mean square of
\begin{equation}
\label{spectralsum}
 R(X,T) = \sum_{t_j \leq T} X^{i t_j}.
\end{equation}
Therefore,
we start by setting up the Kuznetsov trace formula
with a test function that gives us a a smooth version of the sum in \eqref{spectralsum}, and we estimate
the mean square of the weighted sum of Kloosterman sums that show up in its geometric side (Lemma~\ref{avgkloost}).
This allows us to bound the spectral side of the trace formula (Lemma~\ref{kuznetsovbound}).
In order to control the behavior of the Fourier coefficients in the spectral sums of the trace formula
we use a smoothed average of these sums.
This way we obtain (Lemma~\ref{smoothsumbnd}) a mean square estimate of a smoothed version of
$R(X,T)$, from which we extract a mean square estimate for the sharp sum (Lemma~\ref{sharpsumbnd}).

We now start by setting up the Kuznetsov trace formula.
Let $\phi(x)$ be a smooth function on $[0,\infty]$ such that
\begin{align*}
|\phi(x)| &\ll x, \quad x \rightarrow 0, \\
|\phi^{(l)}(x)| &\ll x^{-3}, \quad x \rightarrow \infty,
\end{align*}
for $l=0,1,2,3$. Define
\begin{align*}
&\phi_0  = \frac{1}{2\pi} \int_0^{\infty} J_0(y) \phi(y) dy,\\
&\phi_B(x)  = \int_0^1 \int_0^{\infty} \xi x J_0(\xi x) J_0(\xi y) \phi(y) dy d\xi, \\
&\phi_H(x)  = \int_1^{\infty} \int_0^{\infty} \xi x J_0(\xi x) J_0(\xi y) \phi(y) dy d\xi,\\
&\hat{\phi}(t) = \frac{\pi}{2i \sinh \pi t} \int_0^{\infty} \left( J_{2it}(x) - J_{-2it}(x) \right) \phi(x) \frac{dx}{x},
\end{align*}
where $J_\nu$ is the Bessel function of the first kind and order $\nu$.
By the properties of the Hankel transform we have
\begin{equation*}
 \phi(x) = \phi_B(x) + \phi_H(x).
\end{equation*}
We now choose $\phi$ as in Luo-Sarnak \cite{luo_quantum_1995}. For $X,T>1$ we set
\begin{align*}
\phi_{X,T}(x) & = \frac{-\sinh \beta}{\pi} x \exp(i x \cosh \beta), \\
2\beta & = \log X + \frac{i}{T},
\end{align*}
and apply the Kuznetsov trace formula \cite[Th. 1]{kuznetsov_petersson_1980} with $\phi_{X,T}$ as the test function.
Let $\{f_i\}_{i=1}^{\infty}$ be an orthonormal basis of Maass cusp forms for $\text{SL}_2(\mathbb{Z})$,
with eigenvalues $\lambda_j = \frac{1}{4} + t_j^2$. These cusp forms have Fourier expansions \cite[eq. (2.10)]{kuznetsov_petersson_1980}
\begin{equation*}
f_j(z) = \sqrt{y} \sum \limits_{n=1}^{\infty} \rho_j(n) K_{i t_j}(2\pi n y) \cos(2\pi n x),
\end{equation*}
where $K_\nu(x)$ is the $K$-Bessel function.
For $l_1, l_2 \geq 1$ the trace formula reads
\begin{align*}
\sum_{t_j} \hat{\phi}(t_j) \nu_j(l_1) \overline{\nu_j(l_2)} &+ \frac{2}{\pi} \int_0^{\infty} \frac{\hat{\phi}(t)}{|\zeta(1+2it)|^2}
d_{it}(l_1) d_{it}(l_2) dt \\
&= \delta_{l_1, l_2} \phi_0 + \sum \limits_{c=1}^{\infty} \frac{S(l_1, l_2, c)}{c} \phi_H \left( \frac{4\pi\sqrt{l_1 l_2}}{c}\right),
\end{align*} 
where $\rho_j(l) = \nu_j(l) \cosh(\pi t_j)^{1/2}$,
$d_{it}(l)=\sum_{d_1d_2=l} (d_1/d_2)^{it}$,
and $S(l_1, l_2, c)$ is the classical Kloostermann sum.
By \cite[p. 234]{luo_quantum_1995} we have
\begin{equation}
\label{firstbnd}
\hat{\phi}_{X,T}(t_j) = X^{i t_j} e^{-t_j/T} + O(e^{-\pi t_j}),
\end{equation}
\begin{equation}
\label{secondbnd}
(\phi_{X,T})_0 \ll X^{-1/2},
\end{equation}
\begin{equation}
\frac{2}{\pi} \int_0^{\infty} \frac{\hat{\phi}_{X,T}(t)}{|\zeta(1+2it)|^2}
(d_{it}(n))^2 dt \ll T \log^2 T \,d^2(n),
\end{equation}
\begin{equation} 
\label{lastbnd}
S_n((\phi_{X,T})_B) \ll n^{1/2} X^{-1/2} \log^2 n,
\end{equation}
where
\begin{equation}\label{2701:eq004}
S_n(\psi) = \sum \limits_{c=1}^{\infty} \frac{S(n,n,c)}{c} \psi\left(\frac{4\pi n}{c}\right).
\end{equation}

Analyzing the right hand side of the Kuznetsov trace formula, we prove a bound for the mean square of $S_n(\phi_{X,T})$ as follows.
\begin{lemma}
\label{avgkloost}
Let $A, T > 2$ and let $n$ be a positive integer. Then, for any $\varepsilon > 0$,
\begin{equation*}
\frac{1}{A} \int_A^{2A} \left| S_n(\phi_{X,T}) \right|^2 dX \ll_{\varepsilon} (n A^{1/2} + T^2) (An)^{\varepsilon}.
\end{equation*}
\end{lemma}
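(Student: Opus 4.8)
The plan is to expand the square $|S_n(\phi_{X,T})|^2$ using the definition \eqref{2701:eq004}, and to integrate termwise in $X$ over $[A,2A]$. Writing $\psi = \phi_{X,T}$ and recalling $2\beta = \log X + i/T$, the function $\phi_{X,T}(x) = -\frac{\sinh\beta}{\pi}\,x\exp(ix\cosh\beta)$ depends on $X$ through $\cosh\beta$ and $\sinh\beta$, and since $\log X$ ranges over an interval of length $\log 2$, the oscillatory factor $\exp(ix\cosh\beta)$ is the key object. First I would open up
\[
S_n(\phi_{X,T}) = \sum_{c=1}^\infty \frac{S(n,n,c)}{c}\,\phi_{X,T}\!\left(\frac{4\pi n}{c}\right),
\]
apply the Weil bound $|S(n,n,c)|\ll c^{1/2}(n,c)^{1/2}d(c)$, and observe that the argument $4\pi n/c$ forces effectively $c \ll nX^{1/2}$ (because of the decay of $\phi_{X,T}(x)$ built into $|\phi(x)|\ll x$ near $0$ and the exponential-type behaviour for large $x$; more precisely the relevant range of $c$ comes from where $\phi_{X,T}(4\pi n/c)$ is not negligible, which is $4\pi n/c \gtrsim 1$ i.e. $c \lesssim n$, together with the Gaussian-type cutoff at scale governed by $\mathrm{Im}(\cosh\beta)\approx (\log X)/(2T)$). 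The paper explicitly says we do not exploit cancellation among Kloosterman sums, so $|S(n,n,c)|$ is replaced by its Weil bound from the outset.

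Next I would write the mean square as a double sum over $c_1, c_2$:
\[
\frac1A\int_A^{2A}|S_n(\phi_{X,T})|^2\,dX
= \sum_{c_1,c_2}\frac{S(n,n,c_1)\overline{S(n,n,c_2)}}{c_1 c_2}\,\frac1A\int_A^{2A}\phi_{X,T}\!\Big(\tfrac{4\pi n}{c_1}\Big)\overline{\phi_{X,T}\!\Big(\tfrac{4\pi n}{c_2}\Big)}\,dX,
\]
and estimate the inner integral. The diagonal-type contribution, where $c_1$ and $c_2$ are comparable, is handled by the trivial bound $|\phi_{X,T}(x)|\ll x \cdot e^{-x\,\mathrm{Im}(\cosh\beta)}$ plus the Weil bound, giving (after summing $c\lesssim n$ against $c^{1/2}(n,c)^{1/2}d(c)/c$, which produces roughly $\sqrt{n}$ worth of mass, squared, tempered by the $X^{1/2}$ from the cutoff) the term $nA^{1/2}(nA)^\varepsilon$. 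The off-diagonal contribution, where $c_1 \ne c_2$, is where the oscillation in $X$ must be used: substituting $x_i = 4\pi n/c_i$, the phase of $\phi_{X,T}(x_1)\overline{\phi_{X,T}(x_2)}$ is $\mathrm{Re}(\cosh\beta)\,(x_1 - x_2) = \cosh\!\big(\tfrac{\log X}{2}\big)\cos\!\big(\tfrac1{2T}\big)(x_1-x_2) \approx X^{1/2}(x_1-x_2)$, and one integrates by parts in $X$ (equivalently in $u = X^{1/2}$, or directly) to gain a factor $\big(X^{1/2}|x_1-x_2|\big)^{-1}$ per integration; doing this once or twice, and using $|x_1 - x_2| = 4\pi n\,|c_1 - c_2|/(c_1 c_2)$, reduces the double sum to something like $T^2 (nA)^\varepsilon$ after summing the resulting $|c_1-c_2|^{-1}$-type kernel via a Hardy–Littlewood–Pólya / divisor-sum argument. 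Combining the two regimes yields the claimed bound $(nA^{1/2} + T^2)(An)^\varepsilon$.

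The main obstacle I expect is bookkeeping the competing cutoffs precisely: $\phi_{X,T}(x)$ has an exponential decay $e^{-x\,\mathrm{Im}(\cosh\beta)}$ with $\mathrm{Im}(\cosh\beta)$ of size $\asymp \log X / T$, so the effective range of $c$ in the sum is $c \ll n\log X / \log X = n$ roughly, but one must check that the tails beyond the cutoff are genuinely negligible (using $l=0,1,2,3$ derivative bounds on $\phi$), and that integration by parts in $X$ does not destroy the decay — the derivative $\partial_X \cosh\beta$ and $\partial_X \sinh\beta$ are $O(X^{-1})$, so each integration by parts genuinely gains $(X^{1/2}|x_1-x_2|)^{-1}$ without a compensating loss, but verifying this uniformly over the ranges of $c_1,c_2$ where $|x_1 - x_2|$ is small (so that integration by parts is not efficient and one falls back on the trivial bound) is the delicate point. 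The balance between "integrate by parts" (good when $X^{1/2}|x_1-x_2|\gg 1$) and "trivial bound" (the near-diagonal $|c_1-c_2|$ small range) is exactly what produces the two terms $nA^{1/2}$ and $T^2$, and getting the threshold right so that both contributions come out with only an $(An)^\varepsilon$ loss is the crux of the argument.
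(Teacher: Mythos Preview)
Your overall architecture is right and matches the paper: open the square into a double sum over $c_1,c_2$, apply Weil, split diagonal versus off-diagonal, and use the oscillation in $X$ (one integration by parts) together with the Hardy--Littlewood--P\'olya inequality. But two concrete points are wrong, and together they cause you to swap the roles of the two terms in the final bound.

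First, the size of $\mathrm{Im}(\cosh\beta)$. With $\beta=\tfrac12\log X+\tfrac{i}{2T}$ one has $\mathrm{Im}(\cosh\beta)=\sinh(\tfrac12\log X)\sin(\tfrac{1}{2T})\asymp X^{1/2}/T$, not $\log X/T$ as you wrote. Hence $|\phi_{X,T}(x)|\ll x\,X^{1/2}\exp(-c\,xX^{1/2}/T)$, and the exponential cutoff in the $c$-sum sits at $c\asymp nA^{1/2}/T$, not at $c\asymp n$. Feeding this into the diagonal $c_1=c_2$ gives
\[
\Sigma_d \;\ll\; n^2 A\sum_c \frac{(n,c)\,d^2(c)}{c^3}\exp\Big(-\frac{A^{1/2}n}{Tc}\Big)\;\ll\; T^2\,(An)^{\varepsilon},
\]
so the diagonal produces the $T^2$ term, not $nA^{1/2}$.

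Second, the off-diagonal. A single integration by parts (Lemma~\ref{integralbounds}) gives, with $z_i=4\pi n/c_i$, the bound $z_1z_2A^{1/2}/|z_1-z_2|=4\pi nA^{1/2}/|c_1-c_2|$; there is no $T$ here at all. One then \emph{interpolates} this against the trivial bound with a parameter $0<\lambda<1$ and applies \eqref{HLP} to the resulting kernel $|c_1-c_2|^{-\lambda}$, obtaining $\Sigma_{nd}\ll_\lambda (nA^{1/2}d(n))^{2-\lambda}$; taking $\lambda=1-\varepsilon$ yields $(nA^{1/2})^{1+\varepsilon}$. So the off-diagonal is what contributes $nA^{1/2}$, not $T^2$. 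In particular, your heuristic that the near-diagonal range ``where integration by parts is not efficient'' is the source of the $T^2$ is inverted: the strict diagonal is where the exponential damping (governed by $X^{1/2}/T$) enters and produces $T^2$, while the entire off-diagonal is handled by interpolation plus HLP and has nothing to do with $T$.
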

We postpone the proof of Lemma \ref{avgkloost} to the end of the section,
and we show here how to recover a similar bound on the spectral side of the Kuznetsov trace formula.
\begin{lemma}
\label{kuznetsovbound}
Let $A, T > 2$ and let $n$ be a positive integer. Then, for any $\varepsilon > 0$,
\begin{equation*}
\frac{1}{A} \int_A^{2A} \left| \sum_{t_j} |\nu_j(n)|^2 \hat{\phi}(t_j) \right|^2 dX \ll_{\varepsilon} (n A^{1/2} + T^2) (AnT)^{\varepsilon}.
\end{equation*}
\end{lemma}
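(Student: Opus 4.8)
The plan is to rearrange the Kuznetsov trace formula into an identity for the spectral sum and then bound the mean square of each resulting piece separately. Taking $l_1=l_2=n$ and test function $\phi=\phi_{X,T}$ in the trace formula, writing $\phi=\phi_B+\phi_H$, and using the linearity of the map $\psi\mapsto S_n(\psi)$ (so that $S_n(\phi_H)=S_n(\phi)-S_n(\phi_B)$), one obtains
\begin{align*}
\sum_{t_j}\hat{\phi}(t_j)|\nu_j(n)|^2
&=
(\phi_{X,T})_0
+
S_n(\phi_{X,T})
-
S_n((\phi_{X,T})_B)
\\
&\quad
-
\frac{2}{\pi}\int_0^\infty\frac{\hat{\phi}_{X,T}(t)}{|\zeta(1+2it)|^2}|d_{it}(n)|^2\,dt.
\end{align*}
Squaring, using $|a+b+c+d|^2\ll|a|^2+|b|^2+|c|^2+|d|^2$, and averaging over $X\in[A,2A]$, it suffices to bound in mean square each of the four terms on the right.

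For the three terms other than $S_n(\phi_{X,T})$ the needed bounds are immediate from the estimates recorded before the lemma. By \eqref{secondbnd} we have $(\phi_{X,T})_0\ll X^{-1/2}$, so its mean square over $[A,2A]$ is $\ll A^{-1}$. By \eqref{lastbnd} we have $S_n((\phi_{X,T})_B)\ll n^{1/2}X^{-1/2}\log^2 n$, so its mean square is $\ll nA^{-1}\log^4 n$. For the Eisenstein contribution, the bound $\frac{2}{\pi}\int_0^\infty\hat{\phi}_{X,T}(t)|\zeta(1+2it)|^{-2}|d_{it}(n)|^2\,dt\ll T\log^2 T\,d^2(n)$ holds uniformly in $X$, so the mean square of this term over $[A,2A]$ is $\ll T^2\log^4 T\,d^4(n)$. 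Using $A,T>2$, the divisor bound $d(n)\ll_\varepsilon n^\varepsilon$, and $\log^4 x\ll_\varepsilon x^\varepsilon$, each of these three quantities is $\ll_\varepsilon(nA^{1/2}+T^2)(AnT)^\varepsilon$: indeed $A^{-1}\ll T^2$, $nA^{-1}\log^4 n\ll_\varepsilon nA^{1/2}n^\varepsilon$ since $A^{-1}\le A^{1/2}$, and $T^2\log^4 T\,d^4(n)\ll_\varepsilon T^{2+\varepsilon}n^\varepsilon$.

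Finally, $S_n(\phi_{X,T})$ is exactly the weighted sum of Kloosterman sums estimated in Lemma~\ref{avgkloost}, whose mean square over $[A,2A]$ is $\ll_\varepsilon(nA^{1/2}+T^2)(An)^\varepsilon\ll_\varepsilon(nA^{1/2}+T^2)(AnT)^\varepsilon$. Adding the four contributions yields the claim. There is no genuine obstacle at this step: the entire substance of the argument lies in Lemma~\ref{avgkloost}, whose proof is deferred to the end of the section, and the only thing to verify here is that the diagonal, Eisenstein, and $\phi_B$ pieces are dominated by the target bound, which they are by a comfortable margin.
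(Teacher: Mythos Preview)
Your proof is correct and follows essentially the same approach as the paper: rearrange the Kuznetsov formula with $l_1=l_2=n$ to isolate the spectral sum, bound the $\phi_0$, $\phi_B$, and Eisenstein pieces pointwise via \eqref{secondbnd}--\eqref{lastbnd}, and invoke Lemma~\ref{avgkloost} for the main Kloosterman term. The paper is simply terser, recording the resulting inequality in one line without writing out the rearranged identity or the check that each auxiliary term is absorbed by $(nA^{1/2}+T^2)(AnT)^\varepsilon$.
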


\begin{proof}
By the trace formula and the bounds in equations~\eqref{secondbnd}--\eqref{lastbnd}
we deduce
\begin{equation*}
\begin{split}
\frac{1}{A} \int_A^{2A} \left| \sum_{t_j} |\nu_j(n)|^2 \hat{\phi}(t_j) \right|^2 dX
&\ll
\frac{1}{A} \int_A^{2A} \left| S_n(\phi_{X,T}) \right|^2 dX
\\
&+
T^2\log^4 T \, d^4(n) + \frac{n \log A \log^4 n }{A}.
\end{split}
\end{equation*}
Observing that $d^4(n)\ll n^\eps$, the claim follows from Lemma~\ref{avgkloost}.
\end{proof}

Once we have bounds for a fixed~$n$, we average over $n\in[N,2N]$.
Let $h(\xi)$ be a smooth function supported in $[N,2N]$, whose derivatives satisfy
\begin{equation*}
|h^{(p)}(\xi)| \ll N^{-p}, \quad p=0,1,2,\ldots
\end{equation*}
and such that
\begin{equation*}
\int \limits_{-\infty}^{+\infty} h(\xi) d \xi = N.
\end{equation*}
The next lemma is a result of Luo-Sarnak \cite{luo_quantum_1995}.
\begin{lemma}
Let $h$ be as above. Then
\begin{align*}
&\sum_n h(n) |\nu_j(n)|^2 = \frac{12}{\pi^2} N + r(t_j, N),\\
&\sum_{t_j \leq T} |r(t_j, N)| \ll T^2 N^{1/2} \log^2 T.
\end{align*}
\end{lemma}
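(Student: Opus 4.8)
The plan is to follow the Rankin--Selberg argument of Luo--Sarnak \cite{luo_quantum_1995}; here I outline the steps. Since the Hecke operators commute with $\Delta$, we may take the $f_j$ to be Hecke eigenforms, in which case $\nu_j(n)=\nu_j(1)\lambda_j(n)$ with $\lambda_j(n)\in\RR$ the Hecke eigenvalues, so that $|\nu_j(n)|^2=|\nu_j(1)|^2\lambda_j(n)^2$ and it suffices to understand $\sum_n h(n)\lambda_j(n)^2$. First I would introduce the Rankin--Selberg convolution
\[
D_j(s)=\sum_{n\ge 1}\frac{\lambda_j(n)^2}{n^s}=\frac{\zeta(s)}{\zeta(2s)}\,L(s,\mathrm{sym}^2 u_j),
\]
which converges for $\Re(s)>1$ and, since $L(s,\mathrm{sym}^2 u_j)$ is entire and $\zeta(1+it)\neq0$, continues to a holomorphic function on $\Re(s)\ge 1/2$ apart from a simple pole at $s=1$.

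Next, writing $\wh h(s)=\int_0^\infty h(\xi)\xi^{s-1}\,d\xi$ for the Mellin transform of $h$, I would expand
\[
\sum_n h(n)|\nu_j(n)|^2=|\nu_j(1)|^2\cdot\frac{1}{2\pi i}\int_{(2)}\wh h(s)\,D_j(s)\,ds
\]
and shift the contour to $\Re(s)=1/2$. Because $h$ is smooth and supported in $[N,2N]$, repeated integration by parts gives $\wh h(s)\ll_k N^{\Re(s)}(1+|s|)^{-k}$ for all $k\ge 0$. The pole at $s=1$ contributes $|\nu_j(1)|^2\,\wh h(1)\,\mathrm{Res}_{s=1}D_j(s)=|\nu_j(1)|^2\,N\,\zeta(2)^{-1}L(1,\mathrm{sym}^2 u_j)$, which equals $\tfrac{12}{\pi^2}N$ after invoking the classical evaluation of the Petersson norm in the form $|\nu_j(1)|^2L(1,\mathrm{sym}^2 u_j)=2$ (recall $2/\zeta(2)=12/\pi^2$); this defines $r(t_j,N)$ as the remaining integral over the line $\Re(s)=1/2$.

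Then I would bound $\sum_{t_j\le T}|r(t_j,N)|$ by moving the sum inside the $s$-integral. Using $|\zeta(\tfrac12+iu)\zeta(1+2iu)^{-1}|\ll(1+|u|)^{1/4}\log(2+|u|)$ together with the rapid decay of $\wh h$ on $\Re(s)=1/2$, everything reduces to the mean value estimate
\[
\sum_{t_j\le T}|\nu_j(1)|^2\,\bigl|L(\tfrac12+iu,\mathrm{sym}^2 u_j)\bigr|\ll T^2(\log T)^2,
\]
uniformly for bounded $u$, with at most polynomial growth in $u$. By Cauchy--Schwarz this follows from $\sum_{t_j\le T}|\nu_j(1)|^2\ll T^2$, the harmonic-weighted Weyl law obtained from the Kuznetsov formula at $l_1=l_2=1$, together with the second moment bound $\sum_{t_j\le T}|\nu_j(1)|^2\,|L(\tfrac12+iu,\mathrm{sym}^2 u_j)|^2\ll T^2(\log T)^{O(1)}$.

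The main obstacle is this last second moment. I would open $L(\tfrac12+iu,\mathrm{sym}^2 u_j)$ by its approximate functional equation, a Dirichlet polynomial of length $\ll t_j^{3/2+\eps}$ in the coefficients $\lambda_j(n^2)$; after squaring out and reducing the products $\lambda_j(m^2)\lambda_j(n^2)$ by the Hecke relations, the task becomes to evaluate $\sum_{t_j\le T}|\nu_j(1)|^2\lambda_j(k)$, which one treats with the Kuznetsov trace formula — equivalently with the spectral large sieve, see e.g.\ \cite[Ch.~9]{iwaniec_spectral_2002}. The diagonal $m=n$ produces the expected $T^2\log T$, and the off-diagonal contribution, bounded by the Weil estimate for the resulting Kloosterman sums, is of lower order. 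Assembling these estimates gives the lemma; since everything above is carried out in \cite{luo_quantum_1995}, it also suffices to invoke it directly.
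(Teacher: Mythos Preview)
Your proposal is correct and aligns with the paper's approach: the paper's proof consists entirely of the reference ``See \cite[pg.~233]{luo_quantum_1995}'', and you have accurately sketched the Rankin--Selberg/contour-shift argument carried out there before noting that one may simply invoke that reference directly.
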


\begin{proof} See \cite[pg.233]{luo_quantum_1995}.
\end{proof}

\begin{lemma}
\label{smoothsumbnd}
Let $A, T > 2$. Then, for any $\varepsilon > 0$,
\begin{equation*}
\frac{1}{A} \int_A^{2A} \left| \sum_{t_j} X^{it_j} e^{-t_j/T} \right|^2 dX \ll_{\varepsilon} A^{1/4} \, T^2 (A T)^{\varepsilon}.
\end{equation*}
\end{lemma}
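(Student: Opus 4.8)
The plan is to average the spectral-side estimate of Lemma~\ref{kuznetsovbound} over $n\in[N,2N]$ against the smooth weight $h$, and to use the lemma of Luo and Sarnak stated above to replace $\sum_n h(n)|\nu_j(n)|^2$ by the constant $\tfrac{12}{\pi^2}N$ up to the error $r(t_j,N)$, which is small on average over $t_j$. First I would dispose of the Bessel-function error in \eqref{firstbnd}: since Weyl's law gives $\sharp\{t_j\le U\}\ll U^2$ and hence $\sum_{t_j}e^{-\pi t_j}\ll1$, one has
\[
\sum_{t_j}X^{it_j}e^{-t_j/T}=\sum_{t_j}\hat\phi_{X,T}(t_j)+O(1),
\]
so it is enough to bound $\tfrac1A\int_A^{2A}\big|\sum_{t_j}\hat\phi_{X,T}(t_j)\big|^2dX$. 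If $T\le A^{1/8}$ this follows immediately from the trivial estimate $\big|\sum_{t_j}\hat\phi_{X,T}(t_j)\big|\ll\sum_{t_j}e^{-t_j/T}\ll T^2$, because then $T^4\le A^{1/4}T^2$. So I would henceforth assume $T>A^{1/8}$ and set $N:=T^2A^{-1/4}\ge1$.

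For a fixed $X$, multiplying the Kuznetsov spectral sum $a_n(X):=\sum_{t_j}|\nu_j(n)|^2\hat\phi_{X,T}(t_j)$ by $h(n)$, summing over the finitely many integers $n\in[N,2N]$, interchanging with the absolutely convergent $t_j$-sum, and applying the Luo--Sarnak lemma would give
\[
\frac{12}{\pi^2}\,N\sum_{t_j}\hat\phi_{X,T}(t_j)=\sum_n h(n)\,a_n(X)-\sum_{t_j}\hat\phi_{X,T}(t_j)\,r(t_j,N).
\]
For the first term on the right I would use Cauchy--Schwarz in $n$ and $\sum_n|h(n)|\ll N$ to get $\big|\sum_n h(n)a_n(X)\big|^2\ll N\sum_n|h(n)|\,|a_n(X)|^2$; integrating over $X\in[A,2A]$, dividing by $A$, and invoking Lemma~\ref{kuznetsovbound} (with $n\asymp N$) bounds its mean square by $\ll N^2(NA^{1/2}+T^2)(ANT)^\eps$. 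For the second term, \eqref{firstbnd} gives $|\hat\phi_{X,T}(t_j)|\ll e^{-t_j/T}$ uniformly in $X$, so a dyadic splitting of the $t_j$-sum at the scales $2^kT$, together with the Luo--Sarnak bound $\sum_{t_j\le U}|r(t_j,N)|\ll U^2N^{1/2}\log^2U$, shows $\big|\sum_{t_j}\hat\phi_{X,T}(t_j)r(t_j,N)\big|\ll T^2N^{1/2}\log^2T$ uniformly in $X$, whence its mean square is $\ll T^4N\log^4T$.

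Combining these and dividing through by $N^2$ would give
\[
\frac1A\int_A^{2A}\Big|\sum_{t_j}\hat\phi_{X,T}(t_j)\Big|^2dX\ll(NA^{1/2}+T^2)(ANT)^\eps+\frac{T^4}{N}\log^4T.
\]
The choice $N=T^2A^{-1/4}$ balances the two main terms, since $NA^{1/2}=T^4/N=T^2A^{1/4}$ and $T^2\le T^2A^{1/4}$, while $N\le T^2$ lets one absorb $(ANT)^\eps$ and $\log^4T$ into $(AT)^\eps$; thus the right-hand side is $\ll A^{1/4}T^2(AT)^\eps$, and the reduction of the first paragraph then yields the stated bound for $\sum_{t_j}X^{it_j}e^{-t_j/T}$. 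The substantive input is Lemma~\ref{avgkloost}, which is already available; inside the present deduction the only points I expect to need care are the Cauchy--Schwarz loss in the $n$-average (a factor $N$ that is recovered exactly upon dividing by $N^2$) and the dyadic control of $r(t_j,N)$, and I do not anticipate a real obstacle in either.
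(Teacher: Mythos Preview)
Your proposal is correct and follows essentially the same route as the paper: average the Kuznetsov spectral sum over $n\in[N,2N]$ against $h$, invoke the Luo--Sarnak lemma to separate the main term from $r(t_j,N)$, apply Cauchy--Schwarz in $n$ together with Lemma~\ref{kuznetsovbound}, and balance with $N=T^2A^{-1/4}$. Your treatment is in fact slightly more careful than the paper's, since you explicitly dispose of the range $T\le A^{1/8}$ (so that $N\ge1$) and spell out the dyadic control of $\sum_{t_j}\hat\phi(t_j)r(t_j,N)$, whereas the paper handles the latter by a one-line appeal to partial summation and leaves the former implicit.
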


\begin{proof}
By the previous lemma
\begin{equation}\label{2701:eq002}
\begin{aligned}
\frac{1}{N} \sum_n h(n) \left(\sum_{t_j} |\nu_j(n)|^2 \hat{\phi}(t_j) \right) & = \frac{1}{N} \sum_{t_j} \left( \sum_n |\nu_j(n)|^2 h(n) \right) \hat{\phi}(t_j) \\
& = \frac{12}{\pi^2} \sum_{t_j} \hat{\phi}(t_j) + \frac{1}{N} \sum_{t_j} r(t_j, N) \hat{\phi}(t_j) \\
& = \frac{12}{\pi^2} \sum_{t_j} \hat{\phi}(t_j) + O(T^2 N^{-1/2} \log^2 T),
\end{aligned}
\end{equation}
where the last step can be obtained by $|\hat{\phi}(t_j)| \ll e^{-t_j/T}$ and partial summation.
Note now that from equation~\eqref{firstbnd} it follows that
\begin{equation}\label{2701:eq001}
\sum_{t_j} \hat{\phi}(t_j) = \sum_{t_j} X^{i t_j} e^{-t_j/T} + O(1).
\end{equation}
Combining \eqref{2701:eq002} and \eqref{2701:eq001} we deduce that
\begin{equation*}
\sum_{t_j} X^{i t_j} e^{-t_j/T} \ll \frac{1}{N} \sum_n h(n) \left(\sum_{t_j} |\nu_j(n)|^2 \hat{\phi}(t_j) \right) + T^2 N^{-1/2} \log^2 T.
\end{equation*}
Taking absolute value, squaring, and integrating over $X$,
we infer that
\begin{equation*}
\begin{split}
\frac{1}{A} \int \limits_A^{2A} \;
&\bigg| \sum_{t_j} X^{it_j} e^{-t_j/T} \bigg|^2 dX
\\
&\ll
\frac{1}{A} \int_A^{2A} \left| \frac{1}{N} \sum_n h(n) \left(\sum_{t_j} |\nu_j(n)|^2 \hat{\phi}(t_j) \right) \right|^2 dX
+
T^4 N^{-1} \log^4 T.
\end{split}
\end{equation*}
Moreover, by Cauchy-Schwarz and the properties of $h$, we have
\begin{align*}
\left| \frac{1}{N} \sum_n h(n) \left(\sum_{t_j} |\nu_j(n)|^2 \hat{\phi}(t_j) \right) \right|^2
&\ll
\frac{1}{N^2} \left( \sum_{n=N}^{2N} |h(n)|^2 \right) \left( \sum_{n=N}^{2N} \left| \sum_{t_j} |\nu_j(n)|^2 \hat{\phi}(t_j) \right|^2\right)
\\
&\ll
\frac{1}{N}  \sum_{n=N}^{2N} \left| \sum_{t_j} |\nu_j(n)|^2 \hat{\phi}(t_j) \right|^2
\end{align*}
Finally, we obtain
\begin{align*}
\frac{1}{A} \int_A^{2A} \left| \sum_{t_j} X^{it_j} e^{-t_j/T} \right|^2 dX
&\ll_{\varepsilon}
\frac{1}{N} \sum_{n=N}^{2N}  \left( \frac{1}{A} \int_A^{2A} \left| \sum_{t_j} |\nu_j(n)|^2 \hat{\phi}(t_j) \right|^2 dX \right) + \frac{T^{4+\varepsilon}}{N}
\\
&\ll_{\varepsilon}
(N A^{1/2} +T^2) (ANT)^{\varepsilon} + \frac{T^{4+\varepsilon}}{N},
\end{align*}
where the last inequality follows from Lemma~\ref{kuznetsovbound}.
Choose $N = A^{-1/4} T^2$ to complete the proof.
\end{proof}

The next step is to replace the smoothed sum
$\sum X^{it_j} e^{-t_j/T}$
with the truncated one.
This gives us a corresponding mean square estimate for $R(X,T)$.

\begin{prop}
\label{sharpsumbnd}
Let $A, T > 2$, and let $R(X,T)$ be as in \eqref{spectralsum}. Then, for any $\varepsilon > 0$,
\begin{equation*}
\frac{1}{A} \int_A^{2A} |R(X,T)|^2 dX \ll_{\varepsilon} A^{1/4}\, T^2 (AT)^{\varepsilon}.
\end{equation*}
\end{prop}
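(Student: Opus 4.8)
The plan is to deduce the sharp mean-square bound from the smoothed one in Lemma~\ref{smoothsumbnd} by a de-smoothing argument, the crucial point being to carry this out without losing in the exponents of $A$ or $T$.

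First I would fix a parameter $0<\eta<\tfrac12$, to be chosen at the end, and introduce smooth functions $\chi^{\pm}=\chi^{\pm}_{T,\eta}$ with $0\le\chi^{\pm}\le1$, $\chi^{+}(u)=1$ for $u\le T$ and $\chi^{+}(u)=0$ for $u\ge T(1+\eta)$, $\chi^{-}(u)=1$ for $u\le T(1-\eta)$ and $\chi^{-}(u)=0$ for $u\ge T$, with $(\chi^{\pm})^{(k)}\ll(\eta T)^{-k}$. By Weyl's law, $\#\{t_j\le U\}=\frac{\vol(\GmodH)}{4\pi}U^{2}+O(U\log U)$, the sharp sum $R(X,T)$ differs from either smoothed sum $\sum_{t_j}\chi^{\pm}(t_j)X^{it_j}$ by a sum $\sum_{t_j\in I}X^{it_j}$ with $I$ an interval of length $\asymp\eta T$ around $T$. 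Opening the square and using $\frac1A\int_A^{2A}X^{i(t_i-t_j)}\,dX\ll(1+|t_i-t_j|)^{-1}$, together with Weyl's law to count eigenvalues in $I$ and in dyadic neighbourhoods of the diagonal, the mean square of this difference is $\ll\eta\,T^{3}\log T$.

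It remains to bound the mean square of $\sum_{t_j}\chi^{\pm}(t_j)X^{it_j}$. Here I would rerun the argument of Lemmata~\ref{avgkloost}--\ref{smoothsumbnd}, replacing the Luo--Sarnak test function $\phi_{X,T}$ by a test function $\phi=\phi_{X,T,\eta}$ whose spectral transform satisfies $\hat\phi(t)=X^{it}\chi^{\pm}(t)+O_N(t^{-N})$ for every $N$ --- this is the ``replace the smoothed sum by the truncated one'' step. The geometric side of the Kuznetsov formula again produces weighted sums of Kloosterman sums of the shape \eqref{2701:eq004}, which one estimates by the Weil bound and the Hardy--Littlewood--P\'olya inequality \eqref{HLP} exactly as in the proof of Lemma~\ref{avgkloost}. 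The analogues of the bounds \eqref{firstbnd}--\eqref{lastbnd} now depend on $\eta$ through the derivative bounds for $\phi_{X,T,\eta}$, and one must check that this dependence is mild enough (polynomial in $\eta^{-1}$ suffices) to be absorbed in $(AT)^{\varepsilon}$ as long as $\eta\ge(AT)^{-\varepsilon}A^{1/4}T^{-1}$. Averaging over $n\in[N,2N]$ with $N=A^{-1/4}T^{2}$, as in the proof of Lemma~\ref{smoothsumbnd}, then yields $\frac1A\int_A^{2A}\big|\sum_{t_j}\chi^{\pm}(t_j)X^{it_j}\big|^{2}\,dX\ll_{\varepsilon}A^{1/4}T^{2}(AT)^{\varepsilon}$.

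Combining the two estimates gives
\[
\frac1A\int_A^{2A}|R(X,T)|^{2}\,dX\ll_{\varepsilon}A^{1/4}T^{2}(AT)^{\varepsilon}+\eta\,T^{3}\log T ,
\]
and choosing $\eta\asymp A^{1/4}T^{-1}(\log T)^{-1}$ (and $\eta=\tfrac12$ in the range $T\ll A^{1/4}$, where this would exceed $\tfrac12$ and the trivial bound already suffices) balances the two contributions and yields the proposition. The main obstacle is precisely this de-smoothing step: because the exponential weight $e^{-t_j/T}$ in Lemma~\ref{smoothsumbnd} is not sharply localised at scale $T$, that lemma cannot be invoked as a black box, and one genuinely has to re-examine the geometric side of the Kuznetsov formula for the narrow-transition test function $\phi_{X,T,\eta}$ and verify that the Kloosterman-sum estimates remain uniform, up to $(AT)^{\varepsilon}$, as the transition width $\eta T$ shrinks down to $A^{1/4}$.
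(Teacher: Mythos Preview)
Your de-smoothing strategy diverges from the paper's, and the divergence stems from a false premise: you assert that Lemma~\ref{smoothsumbnd} ``cannot be invoked as a black box'', but the paper does exactly that. Following Luo--Sarnak, one first replaces $R(X,T)$ by $\sum_{t_j}g(t_j)X^{it_j}$ with $g$ a fixed smooth cutoff on $[1/2,T+1/2]$, at cost $O(T)$ by Weyl's law. Writing $\hat g$ for the Fourier transform of $\xi\mapsto g(\xi)e^{\xi/T}$, Fourier inversion gives
\[
\sum_{t_j}g(t_j)X^{it_j}=\int_{-1}^{1}\hat g(\xi)\Bigl(\sum_{t_j}(Xe^{-2\pi\xi})^{it_j}e^{-t_j/T}\Bigr)\,d\xi+O(T\log T),
\]
and now Lemma~\ref{smoothsumbnd} applies directly to the mean square of the inner sum at the shifted argument $Xe^{-2\pi\xi}$. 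The elementary bound $\hat g(\xi)\ll\min(T,|\xi|^{-1})$, Cauchy--Schwarz in $\xi$, and a split of the $\xi$-integral at $|\xi|=T^{-2}$ finish the proof. No new Kuznetsov test function and no reopening of the Kloosterman-sum analysis is required.

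Your alternative route---building a test function $\phi_{X,T,\eta}$ with spectral transform $X^{it}\chi^{\pm}(t)$ and rerunning Lemmata~\ref{avgkloost}--\ref{smoothsumbnd}---is heavier and carries a genuine gap. You assert that the analogues of \eqref{firstbnd}--\eqref{lastbnd} and of the oscillation bounds in Lemma~\ref{integralbounds} depend only polynomially on $\eta^{-1}$, but this is exactly the content that would have to be proved. The Luo--Sarnak function $\phi_{X,T}$ is an explicit exponential whose $X$-oscillation (the second bound of Lemma~\ref{integralbounds}, obtained by a single integration by parts) is transparent; for a function whose spectral side has transition width $\eta T$ there is no a priori reason the same integration-by-parts saves a full power of $|z_1-z_2|$ without introducing growth as $\eta\to0$. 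Your transition-window estimate $\eta T^{3}\log T$ is fine, but the Kloosterman-side analysis for the narrow-transition test function is the entire difficulty and is left as an unsupported claim. The paper's Fourier-inversion device sidesteps all of this.
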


\begin{proof}
We start by choosing a smooth function $g$ that approximates the characteristic function of $[1,T]$.
Let $g$ be a smooth function supported on $[1/2,T+1/2]$ such that $0 \leq g(\xi) \leq 1$, and $g(\xi)=1$ when $\xi \in [1,T]$.
By the strong Weyl's law
we know that $|\{t_j : U \leq t_j \leq U+1 \}| \ll U$, and so we have
\begin{equation*}
R(X,T)
= \sum_{t_j} g(t_j) X^{i t_j} + O(T).
\end{equation*}
Define $\hat{g}(\xi)$ to be the Fourier transform of $g(\xi) \exp(\xi/T)$. By \cite[p. 235-236]{luo_quantum_1995} we have that
\begin{multline*}
\sum_{t_j} g(t_j) X^{i t_j} = \int_{-1}^1 \hat{g}(\xi) \left( \sum_{t_j} (X e^{-2\pi \xi})^{i t_j} e^{-t_j/T} \right) d\xi \\
+ O\left( \sum_{t_j} \frac{e^{-t_j/T}}{t_j} + \frac{e^{-t_j/T}}{|T-t_j|+1} + \frac{\log(T+t_j)e^{-t_j/T}}{T}  \right),
\end{multline*}
where the error term can be bounded by $O(T \log T)$.
Also note the estimate
\begin{equation}\label{2701:eq003}
\hat{g}(\xi) \ll \min \left( T, \frac{1}{|x|}\right).
\end{equation}
Defining
\[
k(X,T,\xi) := \sum \limits_{t_j} (X e^{-2\pi \xi})^{i t_j} e^{-t_j/T},
\]
we deduce that
\begin{align*}
\frac{1}{A} \int_A^{2A} |R(X,T)|^2 dX
&\ll
\frac{1}{A} \int_A^{2A} \left| \int_{-1}^1 \hat{g}(\xi) k(X,T,\xi) d\xi \right|^2 dX + T^2 \log^2 T
\\
&\ll
\frac{1}{A} \int_A^{2A} \left| \int_{|\xi| \leq \delta} \hat{g}(\xi) k(X,T,\xi) d\xi \right|^2 dX
\\
&+
\frac{1}{A} \int_A^{2A} \left| \int_{\delta < |\xi| \leq 1} \hat{g}(\xi) k(X,T,\xi) d\xi \right|^2 dX + T^2 \log^2 T.
\end{align*}
The first term in the sum above we bound, using Lemma~\ref{smoothsumbnd} and the first bound in \eqref{2701:eq003}, by
\begin{equation*}
\int \limits_{|\xi| \leq \delta} |\hat{g}(\xi)|^2  \int \limits_{|\xi| \leq \delta} \frac{1}{A} \int_A^{2A} \left| k(X,T,\xi) \right|^2 dX d\xi
\ll_{\varepsilon}
A^{1/4} \, T^4 (AT)^{\varepsilon} \delta^2,
\end{equation*}
and the second term, using again Lemma \ref{smoothsumbnd} and the second bound in \eqref{2701:eq003}, by
\begin{equation*}
\int \limits_{\delta < |\xi| \leq 1} |\hat{g}(\xi)|^2 |\xi|  \int \limits_{\delta < |\xi| \leq 1} \frac{1}{|\xi|}\frac{1}{A} \int_A^{2A} \left| k(X,T,\xi) \right|^2 dX d\xi  \ll_{\varepsilon} A^{1/4} \, T^2 (AT)^{\varepsilon} \log(\delta^{-1}).
\end{equation*}
On taking $\delta = T^{-2}$ we obtain the claim.
\end{proof}

We are now able to conclude the proof of Theorem~\ref{intro:thm02}.
Let $2 < T \leq A^{1/2}(\log A)^{-2}$. By partial summation,
\begin{equation*}
\sum_{t_j \leq T} \frac{X^{1/2 + i t_j}}{1/2 + i t_j} = \frac{R(X,T)X^{1/2}}{1/2+i T} + i X^{1/2} \int_1^T \frac{R(X,U)}{(1/2+iU)^2} dU.
\end{equation*}
Therefore,
\begin{align*}
\frac{1}{A}\int_A^{2A} \left|\sum_{t_j \leq T} \frac{X^{1/2 + i t_j}}{1/2 + i t_j}\right|^2 dX & \ll \frac{1}{A}\int_A^{2A} \left|\frac{R(X,T)X^{1/2}}{1/2+i T}\right|^2 dX \\
& + \frac{1}{A}\int_A^{2A} \left|X^{1/2} \int_1^T \frac{R(X,U)}{(1/2+iU)^2} dU\right|^2 dX.
\end{align*}
The first term on the right hand side is bounded, in view of Proposition \ref{sharpsumbnd}, by $O(A^{5/4+\varepsilon})$. The second term can be bounded
by using Cauchy-Schwarz inequality and again Proposition \ref{sharpsumbnd}, giving
\[
A (\log T) \int_1^T \frac{1}{U^3} \left( \frac{1}{A}\int_A^{2A} |R(X,U)|^2 dX \right) dU
\ll_{\varepsilon}
A^{5/4+\varepsilon} \int_1^T U^{-1} dU
\ll_{\varepsilon}
A^{5/4+\varepsilon}.
\]
Inserting these bounds in \eqref{avgerror} we obtain
\begin{equation*}
\frac{1}{A} \int_A^{2A} \left| \psi_{G}(X) - X \right|^2 dX \ll_{\varepsilon} A^{5/4+\varepsilon} + \frac{A^2}{T^2} \log^2 A.
\end{equation*}
Choosing $T = A^{3/8}$ above concludes the proof of Theorem~\ref{intro:thm02}.

\subsection{Weighted sums of Kloosterman sums}
In the remainder of the section we prove Lemma~\ref{avgkloost}.
Recall that we want to estimate, on average, the following sum of Kloostermann sums
\begin{equation*}
\frac{1}{A} \int_A^{2A} \left| S_n(\phi_{X,T}) \right|^2 dX,
\end{equation*}
where $S_n(\psi)$ is the weighted sum of Kloosterman sums defined in \eqref{2701:eq004}, i.e.
\begin{equation*}
S_n(\psi) = \sum \limits_{c=1}^{\infty} \frac{S(n,n,c)}{c} \psi\left(\frac{4\pi n}{c}\right),
\end{equation*}
with $n$ a (large) positive integer.
The function $\phi_{X,T}$ carries some oscillation in $X$, that we exploit
when integrating over $X\in[A,2A]$.
\begin{lemma}
\label{integralbounds}
Let $A, T > 2$, and let $z_1, z_2$ be positive real numbers. Then
\begin{equation*}
\frac{1}{A} \int_A^{2A} \phi_{X,T}\left(z_1\right) \overline{\phi_{X,T}\left(z_2\right)} dX \ll z_1 z_2 A \exp\left(-\frac{A^{1/2}(z_1 + z_2)}{T} \right)
\end{equation*}
and
\begin{equation*}
\frac{1}{A} \int_A^{2A} \phi_{X,T}\left(z_1\right) \overline{\phi_{X,T}\left(z_2\right)} dX \ll \frac{z_1 z_2 A^{1/2}}{|z_1 - z_2|}.
\end{equation*}
For the last bound we assume $z_1 \neq z_2$.
\end{lemma}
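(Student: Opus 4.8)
The plan is to write out $\phi_{X,T}(z_1)\overline{\phi_{X,T}(z_2)}$ explicitly, separate its modulus from its oscillatory factor in $X$, and then estimate the $X$-integral in two ways: a crude pointwise bound for the first estimate, and a single integration by parts in $X$ for the second. From $2\beta=\log X+i/T$ we get $e^{\pm\beta}=X^{\pm1/2}e^{\pm i/2T}$, hence $\cosh\beta=a(X)+ib(X)$ with $a(X)=\tfrac12(X^{1/2}+X^{-1/2})\cos\tfrac{1}{2T}$ and $b(X)=\tfrac12(X^{1/2}-X^{-1/2})\sin\tfrac{1}{2T}>0$, and $|\sinh\beta|\ll X^{1/2}$. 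Since $z_1,z_2>0$ are real, $\phi_{X,T}(z_1)\overline{\phi_{X,T}(z_2)}=\pi^{-2}|\sinh\beta|^2 z_1z_2\,e^{i(z_1-z_2)a(X)}e^{-(z_1+z_2)b(X)}$. For $X\in[A,2A]$ and $T>2$ I would record the elementary uniform estimates $|\sinh\beta|^2\ll A$, $b(X)\gg A^{1/2}/T$ (using $X^{1/2}-X^{-1/2}\geq X^{1/2}/2$ and $\sin\tfrac{1}{2T}\gg 1/T$), together with $a(X)\asymp A^{1/2}$, $a'(X)\asymp A^{-1/2}$, $b'(X)\ll A^{-1/2}/T$ (using $\cos\tfrac{1}{2T}\asymp1$ and $X-1\geq X/2$).

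The first bound is then essentially \emph{trivial}: one discards the oscillation and bounds the integrand by its supremum over $[A,2A]$, which by the above is $\ll A\,z_1z_2\,\exp(-c(z_1+z_2)A^{1/2}/T)$ for an absolute constant $c>0$; integrating over $[A,2A]$ and dividing by $A$ gives the stated estimate (only the existence of some exponential decay of this shape matters for the application, the constant in the exponent being immaterial).

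For the second bound I would exploit the \emph{oscillation} of $e^{i(z_1-z_2)a(X)}$. Write the integral as $\int_A^{2A} e^{i(z_1-z_2)a(X)}\Psi(X)\,dX$ with $\Psi(X)=\pi^{-2}|\sinh\beta|^2 z_1z_2 e^{-(z_1+z_2)b(X)}$. Since $a'(X)$ is bounded away from $0$ on $[A,2A]$, integrating by parts once via $e^{i(z_1-z_2)a}=\bigl(i(z_1-z_2)a'\bigr)^{-1}\frac{d}{dX}e^{i(z_1-z_2)a}$ produces a boundary term of size $\ll |z_1-z_2|^{-1}|\sinh\beta|^2 z_1z_2/|a'|\ll |z_1-z_2|^{-1}z_1z_2 A^{3/2}$, plus an integral over $[A,2A]$ of $|z_1-z_2|^{-1}\frac{d}{dX}\bigl(\Psi/a'\bigr)$. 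In the latter, differentiating $|\sinh\beta|^2$, $1/a'$, and the exponential each costs a factor $\asymp A^{-1}$ in relative size — the exponential's derivative brings down $(z_1+z_2)b'\ll (z_1+z_2)A^{-1/2}/T$, which is harmlessly absorbed since $\theta e^{-c\theta}\ll1$ — so $\frac{d}{dX}(\Psi/a')\ll z_1z_2 A^{1/2}$ and the integral is $\ll |z_1-z_2|^{-1}z_1z_2 A^{3/2}$. Dividing by $A$ yields $\ll z_1z_2 A^{1/2}/|z_1-z_2|$ from both pieces, as wanted.

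The only point requiring a little care is checking that $a'(X)\gg A^{-1/2}$ does not vanish on $[A,2A]$, so that one integration by parts suffices and no van der Corput / second-derivative estimate is needed, and that all constants stay uniform in $T>2$; both follow from the elementary bounds on $\cosh\beta$ above. The differentiations in the integration-by-parts step are routine and I would not dwell on them.
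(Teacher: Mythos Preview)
Your proof is correct and follows essentially the same approach as the paper: a trivial pointwise bound for the first estimate, and one integration by parts in $X$ for the second. The only cosmetic difference is that you make the decomposition $\cosh\beta=a(X)+ib(X)$ explicit and integrate by parts against the pure phase $e^{i(z_1-z_2)a(X)}$, whereas the paper keeps the complex exponent intact and uses $\tfrac{d}{dX}\cosh\beta=\sinh\beta/(2X)$ directly; the resulting bounds and the required lower bound on the derivative of the phase are the same.
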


\begin{proof}
Inserting the definition of $\phi_{X,T}$ we can write the integral as
\begin{equation}
\label{integral}
\frac{z_1 z_2}{\pi^2} \int_A^{2A} \left| \sinh \beta \right|^2 \exp\left(i \cosh \beta z_1 + \overline{i \cosh \beta z_2}\right) dX.
\end{equation}
Bounding the integrand uniformly using
\begin{align*}
\left| \sinh \beta \right|^2 &\ll X, \\
\exp\left(i \cosh \beta z_1 + \overline{i \cosh \beta z_2}\right)& \ll \exp\left(-\frac{A^{1/2}(z_1 + z_2)}{T} \right),
\end{align*}
we obtain the first bound
\begin{equation*}
\frac{1}{A} \int_A^{2A} \phi_{X,T}\left(z_1\right) \overline{\phi_{X,T}\left(z_2\right)} dX \ll z_1 z_2 A \exp\left(-\frac{A^{1/2}(z_1 + z_2)}{T} \right).
\end{equation*}
To obtain the second bound we use integration by parts in equation~\eqref{integral} to get
\[
\begin{split}
\frac{z_1 z_2}{\pi^2} f(X)
&\exp\left(
i \cosh \beta z_1 + \overline{i \cosh \beta z_2}
\right) \bigg|_A^{2A}
\\
&+
\frac{z_1 z_2}{\pi^2} \int \limits_A^{2A} f'(X) \exp\left(i \cosh \beta z_1 + \overline{i \cosh \beta z_2} \right) dX,
\end{split}
\]
where
\begin{equation*}
f(X) = \frac{2X \left| \sinh \beta \right|^2}{i \sinh \beta z_1 + \overline{i \sinh \beta z_2}}.
\end{equation*}
We can bound these terms (up to a constant) by
\begin{equation*}
\frac{z_1 z_2 A^{3/2}}{\left|z_1 - z_2 \right|},
\end{equation*}
which gives us the second bound
\begin{equation*}
 \frac{1}{A} \int_A^{2A} \phi_{X,T}\left(z_1\right) \overline{\phi_{X,T}\left(z_2\right)} dX \ll \frac{z_1 z_2 A^{1/2}}{\left|z_1 - z_2 \right|} .
\end{equation*}
This proves the lemma.
\end{proof}

The lemma above will provide the necessary bounds to establish Lemma~\ref{avgkloost}.
\begin{proof}[Proof of Lemma~\ref{avgkloost}]
We expand the square in the integrand and exchange the integral with the sums,
so that we can rewrite the integral as
\begin{equation}
\label{ksum}
\sum \limits_{c_1, c_2 = 1}^{\infty} \frac{S(n,n,c_1) \overline{S(n,n,c_2)}}{c_1 c_2} \frac{1}{A} \int_A^{2A} \phi_{X,T}\left(\frac{4 \pi n}{c_1}\right) \overline{\phi_{X,T}\left(\frac{4 \pi n}{c_2}\right)} dX.
\end{equation}
We now split the sum in equation~\eqref{ksum} into two sums $\Sigma_d$ and $\Sigma_{nd}$,
where $\Sigma_d$ is the sum over the diagonal terms $c_1 = c_2$, and $\Sigma_{nd}$ is the sum over the terms $c_1 \neq c_2$.
We shall make use of the Weil bound on Kloosterman sums throughout the proof, namely
\begin{equation}\label{2701:eq005}
|S(n,n,c)| \leq (n,c)^{1/2} c^{1/2} d(c).
\end{equation}
Moreover, we have
\begin{equation*}
\sum_{c \leq x} (n,c) \, d^2(c) \ll x \log^3 x \, d(n).
\end{equation*}
We bound the diagonal terms using the first bound in Lemma~\ref{integralbounds} (with $z_1=z_2=(4\pi n)/c$), obtaining
\begin{equation*}
\Sigma_d \ll n^2 A \sum \limits_{c} \frac{(n,c) d^2(c)}{c^3} \exp\left( - \frac{A^{1/2} n}{T c} \right)
\ll T^2 \log^3 (An) \, d(n).
\end{equation*}
To bound the non-diagonal terms we interpolate the two bounds in Lemma~\ref{integralbounds}
to get, for $0<\lambda<1$,
\begin{equation*}
\frac{1}{A} \int_A^{2A} \phi_{X,T}\left(\frac{4 \pi n}{c_1}\right) \overline{\phi_{X,T}\left(\frac{4 \pi n}{c_2}\right)} dX \ll \left( \frac{n^2 A}{c_1 c_2} \right)^{1-\lambda} \left( \frac{n A^{1/2}}{|c_1 -c_2|} \right)^{\lambda}.
\end{equation*}
Therefore,
\begin{align*}
\Sigma_{nd} & \ll \sum \limits_{c_1 \neq c_2 = 1}^{\infty} \frac{|S(n,n,c_1) S(n,n,c_2)|}{c_1 c_2} \left( \frac{n^2 A}{c_1 c_2} \right)^{1-\lambda} \left( \frac{n A^{1/2}}{|c_1 -c_2|} \right)^{\lambda} \\
& \ll_{\lambda} (n A^{1/2})^{2-\lambda} \left(\sum_{c=1}^{\infty} \frac{|S(n,n,c)|^{\frac{2}{2-\lambda}}}{c^2} \right)^{2-\lambda},
\end{align*}
where the last inequality follows from the Hardy-Littlewood-P\'olya inequality \eqref{HLP}.
Applying the Weil bound \eqref{2701:eq005} we obtain
\begin{align*}
\Sigma_{nd} & \ll_{\lambda} (n A^{1/2})^{2-\lambda} \left(\sum_{c=1}^{\infty} \frac{((n,c)^{1/2} d(c) \sqrt{c}) ^{\frac{2}{2-\lambda}}}{c^2} \right)^{2-\lambda} \\
& \ll_{\lambda} (n A^{1/2})^{2-\lambda} \left(\sum_{c=1}^{\infty} \frac{(n,c) d^2(c)}{c^{1+\frac{1-\lambda}{2-\lambda}}} \right)^{2-\lambda}
 \ll_{\lambda} (n A^{1/2} d(n))^{2-\lambda}.
\end{align*}
Pick $\lambda = 1-\varepsilon$ and note that $d(n) \ll_{\varepsilon} n^{\epsilon}$ to finish the proof.
\end{proof}

%%%%%%%%%%%%%%%%%%%%%%%%%%%%%%%%%%%%%%%%%%%%%%%%%%%%%%

\end{document}